\documentclass{amsart}
\usepackage{amsmath,amssymb,amsthm,amsfonts,amscd,mathrsfs,mathtools}
\usepackage{enumerate} 
\usepackage[T1]{fontenc}
\usepackage{hyperref}

\pdfsuppresswarningpagegroup=1



\newcommand\N{\mathbb{N}}
\newcommand\R{\mathbb{R}}

\newcommand{\GC}{\mathrm{GC}}
\newcommand{\newd}{\mathrm{d}}
\newcommand{\dsup}{\mathrm{d}_\mathrm{sup}}
\newcommand{\TC}{\mathrm{TC}}
\newcommand{\lTC}{\ell \mathrm{TC}}
\newcommand{\enrsecat}{\mathrm{secat}_\mathrm{ENR}}

\theoremstyle{plain}
\newtheorem{theorem}{Theorem}[section]

\newtheorem{corollary}[theorem]{Corollary}
\newtheorem{proposition}[theorem]{Proposition}

\theoremstyle{definition}
\newtheorem{definition}[theorem]{Definition}
\newtheorem{example}[theorem]{Example}

\theoremstyle{remark}
\newtheorem{remark}[theorem]{Remark}

%
%
%

\title{Geodesic complexity of motion planning}
\author{David Recio-Mitter}
\address{Department of Mathematics, Lehigh University\\Bethlehem, PA 18015, USA}
\email{dar318@lehigh.edu}
\date{\today}

\begin{document}
\maketitle

\begin{abstract}
We introduce the geodesic complexity of a metric space, inspired by the topological complexity of a topological space. Both of them are numerical invariants, but, while the TC only depends on the homotopy type, the GC is an invariant under isometries. We show that in some cases they coincide but we also develop tools to distinguish the two in a range of examples. To this end, we study what we denote the total cut locus, which does not appear to have been explicitly considered in the literature.

To the knowledge of the author, the GC is a new invariant of a metric space. Furthermore, just like the TC, the GC has potential applications to the field of robotics.
\end{abstract}

\section{Introduction} \label{sec:intro}

Almost two decades ago Farber introduced the topological complexity of a space to study the motion planning problem from robotics using topological tools \cite{Far03}. In short, the topological complexity is the smallest number of continuous rules necessary to motion plan on a given space, where a motion planning rule is a function which associates to each pair of points a path between them. Fewer rules signify higher stability (continuity) with respect to the input (pairs of points), which is why we seek to minimize the number of such rules.

To give the formal definition the following is needed: The path space $PX$ is the space of all paths on $X$ with the compact-open topology. The \textit{free path fibration} is the evaluation map $PX \to X \times X$ which sends each path $\gamma$ to the pair $(\gamma(0),\gamma(1))$.

\begin{definition}[Farber '03]
The {\em topological complexity} $\TC(X)$ of a space $X$ is defined to be the smallest $k$ for which there exists a decomposition into pairwise disjoint locally compact sets $X \times X = \bigcup_{i=0}^k E_i$ such that there are local sections $s_i \colon E_i \to PX$ of the free path fibration. 
\end{definition}

In the last two decades, the topological complexity has been computed for a multitude of different spaces by several authors. Furthermore, other variants of topological complexity have emerged, most of which essentially impose some restrictions on the motion planners. For instance, monoidal topological complexity requires the motion from each point to itself to be the constant path. It is an open question of Iwase and Sakai whether $\TC(X)=\TC_M(X)$ \cite{IS}. The equality has been shown for large classes of spaces \cite{Dra}.

From the point of view of applications, the latter restriction is very sensible: if a robot is already at the position to which it needs to move, it is a waste of energy for the robot to move at all. More generally, it is preferable for the motion planner to assign to each pair of points a path of minimal length between them. This is what motivates the definition of geodesic complexity below. First we need some preliminary definitions.

\begin{definition}\label{def:length}
Let $(X,d)$ be a metric space. The length of a path $\gamma\colon [0,1] \to X$ is given by
\[\ell(\gamma) = \sup_{0 = t_0 \le t_1 \le \ldots \le t_N = 1} \sum_{i=1}^N \newd(\gamma(t_{i-1}),\gamma(t_i)),\]
where the supremum is taken over all finite partitions of the interval $[0,1]$.
\end{definition}

\begin{definition}
Let $(X, \newd)$ be a metric space. We say that a path $\gamma$ is a \textit{geodesic} if there exists a number $\lambda$ such that
\[ \newd(\gamma(t),\gamma(t')) = \lambda |t-t'|\]
for all $0 \le t < t' \le 1$.
In particular the length of the path agrees with the distance between the endpoints: $\ell(\gamma)=\newd(\gamma(0),\gamma(1)) = \lambda$. Note that this is the shortest length a path from $\gamma(0)$ to $\gamma(1)$ could possibly have.
\end{definition}

\begin{remark}\label{rem:constant-speed}
Geodesics are shortest paths with constant speed, meaning that they are parametrized proportional to arc length. If a path $\gamma$ has minimal length, in the sense that $\ell(\gamma)=\newd(\gamma(0),\gamma(1))$, then it can be reparametrized to be a geodesic as defined above. This is discussed in Section \ref{sec:alternative-definition}.
\end{remark}

\begin{remark}
In the case when $X$ is a Riemannian manifold, the definition of geodesic above is equivalent to the definition of (smooth) \textit{minimizing} geodesic in the usual sense of Riemannian geometry. This is true for any path satisfying the definition of geodesic above, without the need to assume that it is smooth or even piecewise smooth. This follows from the fact that Riemannian manifolds are locally uniquely geodesic metric spaces, under the usual metric. This means that every point in a Riemannian manifold has a small ball around it in which any geodesic needs to coincide with the unique smooth geodesic with the same endpoints, and thus be itself smooth.
\end{remark}

\begin{definition}
Let $(X,\newd)$ be a metric space and let $GX \subset PX$ be the subspace of the free path space of $X$ consisting of all geodesics on $X$. We call $GX$ the \textit{geodesic path space}. Restricting the free path fibration $PX \to X \times X$ to $GX$ yields a map $\pi\colon GX \to X \times X$.
\end{definition}

\begin{definition}
The {\em geodesic complexity} $\GC(X, \newd)$ of a metric space $(X, \newd)$ is defined to be the smallest $k$ for which there exists a decomposition into $k+1$ pairwise disjoint locally compact sets $X \times X = \bigcup_{i=0}^k E_i$ such that there are local sections $s_i \colon E_i \to GX$ of $\pi$. We call such a collection of local sections a {\em geodesic motion planner} with $k+1$ domains of continuity.
\end{definition}

\begin{remark}
As was pointed out in Remark \ref{rem:constant-speed}, geodesics are shortest paths with constant speed. The constant speed condition is convenient when proving lower bounds because it makes the requirements for geodesic motion planners more rigid, but it is not essential in the definition of geodesic complexity. We show in \ref{thm:reparametrization} that dropping the ``constant speed'' requirement results in an equivalent definition. In short, a motion planner along shortest paths can always be modified to be a motion planner along shortest paths with constant speed, preserving continuity.
\end{remark}

The most commonly used definition of topological complexity requires an open cover with local sections over the open sets of the cover, instead of a decomposition into locally compact sets, as above. However, open sets do not work in the case of geodesic complexity; see Remark \ref{rem:opens}. The definition of topological complexity given above was also given by Farber and he showed that for reasonably nice spaces $X$ (for all Euclidean Neighborhood Retracts for instance) both definitions are equivalent \cite[Prop. 4.12]{Far08}. All spaces considered in this article are ENRs, in fact even manifolds.

\begin{remark}
By definition, $\TC(X) \le \GC(X, \newd)$. The topological complexity is a homotopy invariant (at least if $X$ is an ENR) and so it does not depend on the metric $\newd$. We will see that the geodesic complexity does depend on the metric and can in general differ dramatically from the topological complexity.

However, we also show that in many important examples, such as spheres, projective spaces, flat $n$-tori and the flat Klein bottle, the geodesic and the topological complexity agree; see Section \ref{sec:examples}.
\end{remark}

\begin{remark}
In most cases, the map $\pi$ is no longer a fibration. To see this, consider the example when $X$ is the circle (see Example \ref{ex:circle}). In that case, the fiber $\pi^{-1}((x,y))$ consists of either one geodesic (if $x\ne-y$) or two geodesics (if $x=-y$). However, if $\pi\colon GS^1 \to S^1\times S^1$ were a fibration, the fibers would have to be homotopy equivalent, since the base is connected.

Because $\pi$ is not a fibration, some of the techniques commonly used to compute the topological complexity do not work for geodesic complexity. However, in some cases $\pi$ is a \textit{level-wise stratified covering}. This concept is introduced in Section \ref{sec:cut-locus} in an attempt to characterize the properties of $\pi$ which allow us to find lower bounds for the geodesic complexity in the examples given in the sections \ref{sec:examples}, \ref{sec:embedded-torus} and \ref{sec:flat-spheres}. The ideas used in those sections are also used to find a general lower bound when $\pi$ is a level-wise stratified covering (satisfying some additional properties) in Corollary \ref{cor:general-lower-bound}, indicating that the characterization of level-wise stratified coverings is appropriate.
\end{remark}

There already exists a variant of topological complexity in the literature motivated by the idea of requiring the motion planners to be as efficient as possible. It was introduced by B\l{}aszczyk and Carrasquel-Vera in 2018 \cite{BCV} and it is called the {\em efficient topological complexity} $\lTC(M)$ of a compact orientable Riemannian manifold $M$.

%

However, geodesic complexity is in fact very different from efficient topological complexity, despite the similar heuristics: B\l{}aszczyk and Carrasquel-Vera show that $\TC(M) \le \lTC(M) \le \TC(M)+1$ for any closed orientable Riemannian manifold. In fact, they pose the (still open, to our knowledge) question of whether the first inequality is actually an equality $\TC(M) = \lTC(M)$ for all closed orientable Riemannian manifolds\footnote{They do give one example of a manifold with boundary with $\TC(M)=0$ and $\lTC(M)=1$, namely a closed hemisphere of the standard 2-sphere}. By contrast, we show that the difference between the geodesic complexity and the topological complexity is arbitrarily large, even for closed Riemannian manifolds.

\begin{theorem}\label{thm:gap}
For every $k\in\mathbb{N}$ there exists a closed Riemannian manifold $(M,g)$ such that $\GC(M)-\TC(M)\ge k$. In fact, $M$ can be chosen to be a sphere (with a non-standard metric).
Furthermore, for every $k\in\N$, there exists a metric $g_m'$ on $\R^{k+1}$ such that $\GC(\R^{k+1},g_m')\ge k$, while $\TC(\R^{k+1})=0$.
\end{theorem}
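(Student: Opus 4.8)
The plan is to equip $\R^{k+1}$ with a metric whose minimizing geodesics are forced to make several mutually independent binary choices, so that the evaluation map $\pi\colon GX\to X\times X$, restricted to a neighbourhood of a suitable pair of points, is a level-wise stratified covering with a filtration of depth at least $k$; the lower bound $\GC\ge k$ is then read off from the general machinery of Section~\ref{sec:cut-locus}, and the statement about spheres is obtained by carrying out the same local construction inside a round ball. The technical heart of the argument is the construction itself, not the lower bound, which is essentially formal once the construction is in place.

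To build $g_m'$ I would start from the Euclidean metric on $\R^{k+1}$ and modify it inside a fixed compact region by inserting $m=k$ pairwise disjoint \emph{bumps}: on each of $k$ disjoint small balls replace the flat metric by a strongly positively curved spherical-cap metric, smoothed near the boundary, so that the result $g_k'$ is smooth and complete and agrees with the Euclidean metric outside the region. The positions and shapes of the bumps $B_1,\dots,B_k$, and a witness pair $(x_0,y_0)$, are to be chosen so that every shortest path from $x_0$ to $y_0$ must skirt each $B_i$ on one of exactly two sides, and so that the resulting $k$ binary choices are independent: all $2^k$ of the resulting geodesics from $x_0$ to $y_0$ occur and are minimizing, and more generally a pair near $(x_0,y_0)$ has $2^{r}$ minimizers, where $r$ is the number of choices that are still tied for that pair. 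Since $X\times X$ has dimension $2(k+1)$, there is ample room to arrange the $k$ associated ``tie'' hypersurfaces so that they meet transversally at $(x_0,y_0)$.

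With such a configuration in hand, $\pi$ is, near $(x_0,y_0)$, a level-wise stratified covering whose filtration by the number of minimizers has at least $k$ nontrivially branched strata, each carrying a covering with at least two sheets; Corollary~\ref{cor:general-lower-bound} then gives $\GC(\R^{k+1},g_k')\ge k$. (For completeness I would reprise the underlying reasoning: for a geodesic motion planner $X\times X=\bigsqcup_{i=0}^{p}E_i$ an Arzel\`a--Ascoli-plus-uniqueness argument shows the minimizer is unique and varies continuously over the complement of the cut locus, while along each stratum a continuous section is locally a section of a nontrivial covering; a dimension/pigeonhole induction on the depth of the filtration over $(x_0,y_0)$, using local compactness of the $E_i$, then forces $p\ge k$.) Since $\R^{k+1}$ is contractible we have $\TC(\R^{k+1})=0$, which establishes the second assertion.

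For the first assertion, run the same construction with $k+2$ bumps inside a small ball of the round sphere $S^{k+1}$, producing a smooth metric $g$ on $S^{k+1}$ that is round outside the modified region (equivalently, one-point compactify $(\R^{k+1},g_{k+2}')$ and round off near the added point). The cut-locus analysis is purely local around the witness pair and so is unchanged by what happens elsewhere on $S^{k+1}$ (in particular by the round antipodal locus), so $\pi$ is again a level-wise stratified covering with at least $k+2$ nontrivially branched strata and $\GC(S^{k+1},g)\ge k+2$; since $\TC(S^{k+1})\le 2$, the manifold $M=(S^{k+1},g)$ satisfies $\GC(M)-\TC(M)\ge k$. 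The main obstacle throughout is making the $k$ (resp.\ $k+2$) binary choices genuinely independent with a \emph{smooth complete} metric and verifying rigorously that $\pi$ is an honest level-wise stratified covering of the claimed depth --- that no unforeseen extra minimizers or degeneracies appear, that the bumps can be kept disjoint and mutually non-interfering inside a fixed $\R^{k+1}$, and that the tie hypersurfaces are indeed transverse at the witness pair.
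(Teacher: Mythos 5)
Your overall strategy is viable in spirit, but as written it has a genuine gap, and the gap sits exactly where you locate it yourself: the existence of the bump configuration is asserted, not constructed, and it is not a routine matter. Concretely, in $\R^{k+1}$ with $k\ge 2$ a positively curved cap supported in a small ball does \emph{not} force a binary choice: minimizers avoiding a rotationally symmetric obstacle can skirt it along a whole $S^{k-1}$-family of directions, so the ties you need are either positive-dimensional families (which destroys the finite-sheeted structure required for Corollary \ref{cor:general-lower-bound}) or must be broken down to two sides by obstacles of a very particular (wall-like) shape that your proposal does not specify. Beyond that, you need all $2^k$ sign patterns to be \emph{exactly} tied at the witness pair, the $k$ tie hypersurfaces to be independent, and --- crucially for the lower bound --- the inconsistency condition of the level-wise stratified covering to hold at every stratum (conditions I--III plus inconsistency). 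In the paper this kind of verification is the bulk of the work in each worked example (flat tori, Klein bottle, embedded torus, flat sphere), and ``transversality plus room in dimension $2(k+1)$'' does not substitute for it. The sphere case inherits the same unproved construction, so as it stands neither assertion of the theorem is established.

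For comparison, the paper's proof avoids all of this machinery. It embeds a torus $T^n\hookrightarrow S^{n+1}$ (respectively $T^k\hookrightarrow\R^{k+1}$) with trivial normal bundle and modifies the metric so that a tubular neighborhood carries the product metric $T^n\times(-\mathrm{diam}(T^n),\mathrm{diam}(T^n))$; any geodesic between points of the torus is then too short to leave the neighborhood and, by the product structure, stays in $T^n\times\{0\}$, so the torus is convex. Theorem \ref{thm:convex} and $\TC(T^n)=n$ (Theorem \ref{thm:sphere-torus}) immediately give $\GC(S^{n+1},g_m)\ge n$ while $\TC(S^{n+1})\le 2$, and likewise $\GC(\R^{k+1},g_m')\ge k$ with $\TC(\R^{k+1})=0$ by linear interpolation. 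In effect, your plan tries to rebuild by hand, near one witness pair, the total cut locus that the flat torus already possesses; the convexity argument imports that structure wholesale and reduces the lower bound to a known $\TC$ computation. If you want to salvage your route, the cleanest fix is to replace the bumps by an embedded flat torus with a product-metric collar --- at which point you have rediscovered the paper's argument.
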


The reason geodesic complexity is very different from efficient topological complexity is that geodesic complexity depends mostly on the structure of the {\em total cut locus} of $X$, which is the subset $C \subset X \times X$ consisting of all pairs $(x, y)$
for which there is more than one geodesic $\gamma$ from $x$ to $y$; see Section \ref{sec:cut-locus}. This is because, by Theorem \ref{thm:cut-locus}, if a metric space is nice enough, there is a local section of $\pi$ on the complement the total cut locus. Thus the main challenge is to find a geodesic motion planner on the total cut locus itself.

However, efficient topological complexity does not require geodesic motion on the total cut locus: Because of the definition of efficient topological complexity, an efficient motion planner need not be geodesic on a set of measure zero, but the total cut locus of a closed Riemannian manifold is always a set of measure zero. Therefore, an efficient motion planner can be constructed by using any (not necessarily geodesic) motion planner on the total cut locus and the unique geodesic motion planner on the complement of the total cut locus (as a single motion planning set). B\l{}aszczyk and Carrasquel-Vera use this fact in \cite{BCV} and a proof can be found there.

In sections \ref{sec:examples}, \ref{sec:embedded-torus} and \ref{sec:flat-spheres} the geodesic complexity of several spaces are computed. It is worth noting that the lower bounds are proven by direct considerations of explicit motion planners (i.e. without recourse to algebra), which is very uncommon in the field of topological complexity and its variants. The ideas used there might be applicable to many further examples. We summarize the findings in the following theorems.

\begin{theorem}[Theorems \ref{thm:torus} and \ref{thm:klein}]
For the flat $n$-torus $T^n_{flat}$ and the flat Klein bottle $K$, the topological complexity and the geodesic complexity agree: $\GC(T^n_{flat})=\TC(T^n)=n$ and $\GC(K)=\TC(K)=4$.
\end{theorem}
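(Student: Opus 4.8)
The plan is to prove the two equalities $\GC(T^n_{flat})=\TC(T^n)=n$ and $\GC(K)=\TC(K)=4$ separately, and in each case the lower bound on $\GC$ comes for free. By the general inequality $\TC(X)\le\GC(X,\newd)$ together with the classical value $\TC(T^n)=n$ \cite{Far03} and the computation $\TC(K)=4$ of Cohen and Vandembroucq, one gets $\GC(T^n_{flat})\ge n$ and $\GC(K)\ge 4$. (Alternatively, these lower bounds can be obtained directly from the structure of the total cut locus and Corollary \ref{cor:general-lower-bound}, which is the method used for the harder examples in Sections \ref{sec:embedded-torus} and \ref{sec:flat-spheres}.) So the content of the theorem is to exhibit a geodesic motion planner on $T^n_{flat}$ with $n+1$ domains of continuity, and one on $K$ with $5$ domains.

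For the flat torus, realise $T^n_{flat}=\R^n/\Z^n$ with the metric induced from the Euclidean metric on $\R^n$. Minimising geodesics from $x$ to $y$ are in bijection with lattice translates $y-x+v$, $v\in\Z^n$, of minimal Euclidean norm, and since this norm is minimised coordinatewise there is more than one such translate exactly when some coordinate of $y-x$ is congruent to $1/2$. Hence the total cut locus is $C=\{(x,y):(y-x)_i\equiv 1/2\text{ for some }i\}$, stratified by the number $k$ of ``antipodal'' coordinates. Put $E_0=(T^n)^2\setminus C$, and for $1\le k\le n$ let $E_k$ be the set of pairs with exactly $k$ antipodal coordinates. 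Each $E_k$ is open in the closed set ``at least $k$ antipodal coordinates'', hence locally closed and so locally compact, and the $E_k$ partition $(T^n)^2$. On $E_0$ take the unique short geodesic in each coordinate. Each $E_k$ with $k\ge 1$ is a topological disjoint union over the $\binom{n}{k}$ choices of which coordinates are antipodal, and on each such piece one takes the short geodesic in the non-antipodal coordinates and, say, the positively oriented geodesic in each antipodal coordinate; this is a continuous constant-speed geodesic section. This yields $n+1$ domains, so $\GC(T^n_{flat})\le n$. (Equivalently, one proves the subadditivity $\GC(X\times Y)\le\GC(X)+\GC(Y)$ for $\ell^2$-product metrics, using that product geodesics split, and applies it to $\GC(S^1)=1$.)

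For the flat Klein bottle, present $K=\R^2/\Gamma$ with $\Gamma$ generated by the translation $(x,y)\mapsto(x+1,y)$ and the glide reflection $(x,y)\mapsto(-x,y+1/2)$, with the induced flat metric. The argument has the same overall shape: first determine the cut locus of a point, and hence the total cut locus $C\subset K\times K$, explicitly as a finite union of flat pieces; then, using Theorem \ref{thm:cut-locus} to get a continuous geodesic section off $C$, stratify $C$ by the multiplicity of minimising geodesics and patch together continuous constant-speed geodesic sections over finitely many locally compact pieces, arranging them into exactly $5$ domains of continuity so that $\GC(K)\le 4$.

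The main obstacle, for the Klein bottle, is the explicit description and stratification of its total cut locus: the glide reflection makes its combinatorics noticeably more involved than in the torus case, and one must in addition check that five domains really suffice, ruling out the need for a sixth. For the torus the only subtlety is the bookkeeping showing that the strata $E_k$ are locally compact and that the coordinatewise choices glue continuously across the clopen pieces of each $E_k$, both of which are routine. Once the upper bounds $\GC(T^n_{flat})\le n$ and $\GC(K)\le 4$ are in place, combining them with the lower bounds above gives the stated equalities.
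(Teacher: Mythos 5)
Your torus argument is essentially the paper's: the upper bound is the Cohen--Pruidze stratification by the number of antipodal coordinates (which is a geodesic motion planner), and the local compactness bookkeeping you describe is correct. Your choice of lower bounds, $\GC\ge\TC$ together with $\TC(T^n)=n$ and $\TC(K)=4$, is also logically valid; the paper deliberately avoids citing $\TC(K)=4$ and instead gives a direct geometric proof of $\GC(K_{flat})\ge 4$ via the chamber/diagonal-sequence argument on the strata of the total cut locus plus local compactness of the $E_i$, but quoting Cohen--Vandembroucq is a legitimate alternative for the statement as posed.

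The genuine gap is the Klein bottle upper bound $\GC(K)\le 4$, which you only sketch and whose main difficulty you flag but do not resolve. The problem is that the obvious stratification of $K\times K$ by multiplicity of geodesics, $S_1,\dots,S_4$ (unique geodesic; interior of an edge of $C_x$; vertex of the $\theta$-graph $C_x$; vertex of $S^1\vee S^1=C_x$), does \emph{not} admit local sections of $\pi$ over $S_2$, $S_3$, $S_4$: by non-orientability there is no globally consistent choice of ``up'' versus ``down'' geodesic, so ``patching sections over the multiplicity strata'' fails outright, unlike on the torus. The paper's fix is to cut each stratum according to whether $x$ lies in the annulus $A=\{x_1\ne 0\}$, which kills the monodromy, and then — crucially — to recombine pieces of \emph{different} multiplicity into single domains, namely $\tilde E_0=S_1$, $\tilde E_1=S_2^A$, $\tilde E_2=S_2^c\sqcup S_3^A$, $\tilde E_3=S_3^c\sqcup S_4^A$, $\tilde E_4=S_4^c$, checking that each union is topologically disjoint and that each piece is locally compact and admits a continuous geodesic choice (this requires the explicit description of $C_x$ as a wedge of circles or a $\theta$-graph depending on $x_2$, and of how it deforms as $x$ moves). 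Naively cutting each of $S_2,S_3,S_4$ by $A$ gives seven sets, not five, so ``arranging them into exactly $5$ domains'' is precisely the step that needs an argument; as written, your proposal asserts the conclusion rather than proving it. To complete the proof you would need to carry out this cut-and-regroup construction (or some equivalent five-set geodesic planner) explicitly.
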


It turns out that the torus has a different geodesic complexity if equipped with another very commonly used metric:

\begin{theorem}[Theorem \ref{thm:embedded-torus}]
Let $T_{emb}$ be the standard embedded torus in $\R^3$ and let $T$ be the flat 2-torus. Then
\[\GC(T_{emb}) = 3>\GC(T)=2.\]
\end{theorem}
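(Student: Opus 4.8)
The plan is to establish the two equalities $\GC(T_{emb})=3$ and $\GC(T)=2$ separately; the second is handled by the earlier Theorem on the flat $n$-torus (with $n=2$), so the real content is $\GC(T_{emb})=3$. The strategy is the standard two-sided one: prove $\GC(T_{emb})\le 3$ by exhibiting an explicit geodesic motion planner with four domains of continuity, and prove $\GC(T_{emb})\ge 3$ by a direct analysis of the total cut locus $C\subset T_{emb}\times T_{emb}$ along the lines indicated in Section~\ref{sec:cut-locus}, showing that no geodesic motion planner can have only three domains. For the upper bound I would first identify, for the standard embedded torus of revolution in $\R^3$, the cut locus $\mathrm{Cut}(x)$ of a point $x$; by rotational symmetry this depends only on the distance of $x$ from the axis, and it is known to be either an arc, a circle, or a ``Y''-shaped (or $\theta$-shaped) graph on the torus depending on whether $x$ lies on the outer equator, the inner equator, or in between. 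Away from the total cut locus, Theorem~\ref{thm:cut-locus} gives a single continuous geodesic section; one then stratifies $C$ according to the number and combinatorial type of minimizing geodesics and builds sections on each stratum, arranging the pieces into four locally compact domains. To keep the count at $3$ rather than higher, one uses the one-parameter family of symmetries of $T_{emb}$ (rotation about the axis, and the reflections) to reduce the local model near the worst points of $C$.

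For the lower bound $\GC(T_{emb})\ge 3$ I would argue as in the flat-torus and sphere computations: suppose for contradiction that $X\times X=E_0\sqcup E_1\sqcup E_2$ with continuous geodesic sections $s_i\colon E_i\to GX$. The key is to locate a point $p$ on the inner equator whose cut locus is a graph with a vertex of degree three (the branch point of the ``Y''), so that the map $\pi$ restricted near the corresponding portion of $C$ is a level-wise stratified covering in which the fiber over the branch point has strictly more geodesics than over nearby points, and the local sheets cannot be separated into fewer than a certain number of continuous pieces. Concretely, near such a pair $(p,q)$ with $q$ the branch point one finds a small topological model — a neighborhood of $(p,q)$ in $X\times X$ together with the stratified set of minimizers over it — for which a counting/connectivity argument (the same sort used in Section~\ref{sec:examples} and formalized in Corollary~\ref{cor:general-lower-bound}) forces at least three domains of continuity. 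One must also check that $E_0,E_1$ alone cannot absorb the generic part: the complement of $C$ is connected but the unique geodesic field over it does not extend continuously across $C$ at the troublesome strata, which pushes the count up by one compared to the flat case.

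The main obstacle I expect is the lower bound, specifically pinning down the precise local structure of the total cut locus of $T_{emb}$ near the inner equator and verifying that it genuinely obstructs a motion planner with three domains (as opposed to merely looking complicated). Unlike the flat torus, where the cut locus of each point is a nice subcomplex and the symmetry is transitive, here the cut locus changes combinatorial type as the basepoint moves, so the stratification of $C$ is more intricate and one has to identify exactly which stratum carries the essential obstruction and show it cannot be continuously resolved together with an adjacent stratum. A secondary difficulty is making the upper-bound construction explicit enough to be verified — choosing geodesics continuously on the stratum of pairs with two or three minimizers, in a way compatible across the boundaries of the four domains — which requires a careful hands-on description of minimizing geodesics on the torus of revolution (using the Clairaut relation to parametrize them). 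Once the local model at the degree-three branch point is understood, both bounds should follow by the same explicit, algebra-free methods used elsewhere in the paper.
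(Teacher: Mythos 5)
Your overall architecture matches the paper's (upper bound by an explicit four-domain geodesic planner built from the Gravesen--Markvorsen--Sinclair--Tanaka description of the cut locus of a point of the torus of revolution; lower bound by a cut-locus/stratified-covering argument), but the concrete lower-bound mechanism you propose does not reach $\GC(T_{emb})\ge 3$. A pair $(p,q)$ with $q$ a degree-three branch point of $C_p$, surrounded by strata with two and one minimizers, only forces such a pair into the closure of three domains of continuity, i.e.\ it gives $\GC\ge 2$ --- the same count as for the flat torus, where the degree-four vertex of the cut locus yields $\GC(T_{flat})\ge 2$. The extra level that pushes the bound to $3$ comes from varying the \emph{basepoint}: by \cite{GMST}, as the radial coordinate $x_0$ of $p$ crosses a critical value $c_1$ the combinatorial type of $C_p$ changes, and the two degree-three vertices $v_1^p$, $v_2^p$ (the endpoints of the conjugate-point subarcs) merge with the vertex $v_0^p$ where the opposite meridian meets the Jordan curve. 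The three minimizers from $p$ to $v_1^p$ and the three minimizers from $p$ to $v_2^p$ form \emph{disjoint} families (they pass through disjoint sectors of the complement of $C_p$). Since the first step already places every pair $(p,v_1^p)$ and $(p,v_2^p)$ in the closure of all three hypothetical domains $E_0,E_1,E_2$, a diagonal argument at the merging pairs $(p,v_0^p)$ shows that no choice of $s_i((p,v_0^p))$ can be continuous, so a fourth domain is forced. You do flag the change of combinatorial type of the cut locus as the basepoint moves as an ``obstacle'' and assert it ``pushes the count up by one,'' but you give no argument for that jump; this degeneration is precisely the missing idea, not a technical nuisance, and without it your local model at a single branch point proves only $\GC(T_{emb})\ge 2$.

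Two smaller geometric inaccuracies would also derail the execution. The cut locus of a point of $T_{emb}$ is never an arc or a circle: it is a deformation retract of the complement of the point, hence carries the homotopy type of $S^1\vee S^1$; concretely it is the opposite meridian together with a Jordan curve freely homotopic to the parallels, plus, for $x_0$ large enough ($x_0>c_2>1$ in the notation of \cite{GMST}), two conjugate-point subarcs. Correspondingly, the degree-three vertices you need occur for basepoints sufficiently far from the inner equator, not on the inner equator, and the decisive degeneration happens at $x_0=c_1$. Your upper-bound plan (the complement of the total cut locus, the pairs with $q$ in the interior of an edge of $C_p$, the ordinary vertices, and the distinguished pairs $(p,v_0^p)$ as a fourth set) is in line with the paper's construction and is fine as sketched.
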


\begin{theorem}[Theorem \ref{thm:flat-sphere}]
Let $W$ be the boundary of the 3-cube with the flat metric. This is a 2-sphere with a non-standard metric (a flat sphere) for which the geodesic complexity is different from the topological complexity:
\[\GC(W)\ge3>\TC(W)=\TC(S^2)=2.\]
\end{theorem}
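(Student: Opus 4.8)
The plan is to establish the two inequalities $\GC(W)\ge 3$ and $\TC(W)=2$ separately, noting that $\TC(W)=\TC(S^2)=2$ follows immediately from the fact that $W$ is homeomorphic to $S^2$ together with the homotopy invariance of topological complexity (Farber's computation $\TC(S^2)=2$), and that the strict inequality $3>2$ is then automatic. So the entire content is the lower bound $\GC(W)\ge 3$, which I would prove by a direct analysis of the total cut locus $C\subset W\times W$ and an explicit obstruction to finding a geodesic motion planner with only two domains of continuity.

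\medskip

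First I would set up the geometry of $W$, the boundary of the unit $3$-cube with its flat (intrinsic) metric: $W$ is a flat surface with six square faces, twelve edges, and eight cone points at the vertices, each with cone angle $3\pi/2$ (since three right angles meet). Geodesics are straight lines when the cube is unfolded; the key phenomenon is that the cut locus of a generic point $x$ (the set of $y$ for which there is more than one minimizing geodesic, or a geodesic ceases to minimize) is a tree-like graph on $W$ determined by unfolding, and crucially the \emph{total} cut locus $C=\{(x,y): \#\pi^{-1}(x,y)>1\}$ has a stratified structure whose top stratum (pairs joined by exactly two geodesics) is relatively open in a codimension-one subset, with deeper strata where three or more geodesics coincide. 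Following the strategy announced after Theorem~\ref{thm:cut-locus}, a geodesic motion planner on the complement of $C$ is unique and continuous, so every additional domain of continuity must be spent covering $C$. I would identify the locus $C_{\ge 3}$ of pairs $(x,y)$ admitting three or more distinct minimizing geodesics — these occur when $x$ and $y$ are positioned symmetrically with respect to several faces/edges simultaneously, e.g. antipodal-type configurations through vertices — and show this set is nonempty and, more importantly, that near such a configuration no single continuous geodesic selection can be defined on a neighborhood intersected with $C$: as one approaches a triple point along $C$ from different strata, the two-valued selections are forced to disagree.

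\medskip

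The core argument for $\GC(W)\ge 3$ is then a local obstruction of the type used in the torus and embedded-torus cases (Theorems~\ref{thm:torus}, \ref{thm:embedded-torus}): suppose for contradiction that $W\times W = E_0\sqcup E_1\sqcup E_2$ with continuous geodesic sections $s_i$. Since $\TC(W)=2$ we cannot immediately contradict the count, so instead I would locate a specific triple point $p_0=(x_0,y_0)\in C_{\ge 3}$ together with three arcs in $C$ emanating from $p_0$ (the three ``walls'' of two-geodesic pairs meeting at $p_0$), analyze which geodesic a continuous section must select as a limit along each arc, and show by a combinatorial/topological argument — essentially that a locally constant choice of ``which pair of the three geodesics'' cannot be made consistently around $p_0$ — that at least three distinct $E_i$ must meet every neighborhood of $p_0$, or else two of them overlap in closure in a way that breaks continuity. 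Concretely, I expect to reduce to showing: any geodesic motion planner restricted to a small neighborhood of $p_0$ needs at least $3$ domains, because the ``monodromy'' of geodesic selections around $p_0$ is nontrivial (three geodesics with a cyclic structure, no global section, and any two-element local section fails on one of the three arcs). Combined with the fact that away from $C$ one domain already suffices but cannot be extended over $p_0$, this yields $\GC(W)\ge 3$.

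\medskip

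The main obstacle I anticipate is the precise local description of the total cut locus near the vertices and the verification that a genuine triple point with the required cyclic monodromy exists and is not removable — i.e. that one cannot cleverly split the three geodesics into domains using only two sets. This requires carefully unfolding the cube around a vertex of cone angle $3\pi/2<2\pi$, understanding how the cut locus branches there, and checking that the configuration is stable enough that the obstruction genuinely forces a third domain rather than being an artifact of a lower-dimensional set (which, by the locally-compact-decomposition definition of $\GC$, could potentially be absorbed). Handling this correctly — distinguishing the robust codimension-zero-in-$C$ obstruction from fragile lower-dimensional coincidences — is where the real work lies, and it parallels, but is more delicate than, the cut-locus analysis carried out for $T_{emb}$.
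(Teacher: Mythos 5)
Your reduction of the theorem to the lower bound $\GC(W)\ge 3$ and the observation that $\TC(W)=\TC(S^2)=2$ by homotopy invariance are fine, but the core counting step does not deliver the stated bound. The obstruction you set up at a triple point $p_0\in C_{\ge 3}$ --- three geodesics $g_1,g_2,g_3$, three two-geodesic walls meeting at $p_0$, and the fact that a limiting selection along the walls would have to lie in $\{g_1,g_2\}\cap\{g_1,g_3\}\cap\{g_2,g_3\}=\emptyset$ --- only shows that every neighborhood of $p_0$ must meet at least three of the sets $E_i$. That rules out geodesic motion planners with two domains, i.e.\ it proves $\GC(W)\ge 2$, which is no better than what $\TC(W)=2$ already gives. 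Your closing inference (``away from $C$ one domain suffices but cannot be extended over $p_0$, hence $\GC(W)\ge 3$'') is a non sequitur: the unique-geodesic domain is simply one of the three hypothesized sets, and a three-set decomposition is perfectly consistent with everything you have shown. To force a fourth domain you need either a configuration one level deeper in the stratification, or an additional global argument of the Klein-bottle type (local closedness of the $E_i$ forcing an entire path component of the deepest stratum into a single domain over which no continuous section exists); you gesture at ``nontrivial monodromy'' of the three geodesics, but you neither formulate nor verify such a statement for $W$, and it is not clear it holds for the triple-point stratum of the cube.

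The paper's proof instead works at the pair $(p,q)$ of \emph{opposite corners}, between which there are exactly six geodesics $D_1,\dots,D_6$. Explicit unfolding computations (twelve candidate paths and their normalized squared lengths) show that pairs approaching $(p,q)$ symmetrically along the diagonals of a pair of opposite faces have exactly four geodesics, and by the 3-fold rotational symmetry about the axis through $p$ and $q$ there are three such families, whose geodesics limit onto the subsets $\{D_1,D_3,D_4,D_6\}$, $\{D_2,D_3,D_5,D_6\}$, $\{D_1,D_2,D_4,D_5\}$, which have empty triple intersection. Each four-geodesic pair is itself approached by two-geodesic pairs ($\{A_1,A_4\}$ versus $\{A_7,A_{10}\}$) and those by unique-geodesic pairs, and a nested diagonal-sequence induction (two-geodesic pairs lie in the closure of two domains, four-geodesic pairs in the closure of three domains) shows that in a hypothetical three-set decomposition the value $s_0((p,q))$ would have to lie in that empty intersection --- a contradiction, hence $\GC(W)\ge 3$. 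It is this four-level structure (one, two, four, six geodesics at the corner pair) together with the explicit length computations that your sketch is missing; establishing it, rather than the generic triple-point analysis, is where the actual work of the proof lies.
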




The upper bounds on geodesic complexity in the previous theorems come from explicit geodesic motion planners. In particular, we give an optimal geodesic motion planner for the Klein bottle $K$. While it was known that a (not necessarily geodesic) motion planner with 5 sets exists by the general dimensional upper bound, no motion planner had been constructed explicitly. It is worth pointing out that we give a geometric proof of $\GC(K)=4$. In contrast, both proofs of the equality $\TC(K)=4$, the first by Cohen and Vandembroucq in 2018 \cite{CoVa} and the second by Iwase, Sakai and Tsutaya \cite{IST}, involve complicated algebraic calculations.

In Section \ref{sec:discussion} we formulate some open questions regarding geodesic complexity which arise naturally and seem to be promising problems for further research.


I would like to thank Diarmuid Crowley, Don Davis, Mark Grant, Mike Harrison and Jarek K\k{e}dra for very valuable discussions and suggestions. I would also like to thank the anonymous referees for their helpful suggestions.

\section{Are $\GC(X)$ and $\TC(X)$ equal?}\label{sec:gap}

A priori $\GC(X)$ and $\TC(X)$ could actually be the same for all reasonable metric spaces $X$ (assuming at least that $X$ is geodesically complete, to make sure that $\GC(X)$ is not automatically infinite). However, in this section we construct closed Riemannian manifolds $M$ for which the numbers $\GC(M)$ and $\TC(M)$ are arbitrarily far apart from each other. In fact, the constructed manifolds are spheres with non-standard Riemannian metrics. We also construct a Riemannian metric on $\R^n$ under which the geodesic complexity is unbounded as the dimension increases, see Theorem \ref{thm:gap}.

We need the following definition.

\begin{definition}
A subspace $K$ of a metric space $(X,\newd)$ is said to be \emph{convex} if for any pair of points $x,y\in K$, \textit{every} geodesic in $K$ between $x$ and $y$ lies entirely in $K$.
\end{definition}

\begin{theorem}\label{thm:convex}
If $K$ is a convex locally compact subspace of $(X,\newd)$, then $\TC(K) \le \GC(K) \le \GC(X)$.
\end{theorem}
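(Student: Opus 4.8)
The plan is to prove the two inequalities separately. The left inequality $\TC(K)\le\GC(K)$ is immediate from the definitions (as already noted in the excerpt: any geodesic motion planner on $K$ is in particular a topological motion planner, since $GK\subset PK$), so the real content is $\GC(K)\le\GC(X)$. For this, suppose we are given a geodesic motion planner on $X$ with $k+1$ domains of continuity, i.e.\ a decomposition $X\times X=\bigcup_{i=0}^k E_i$ into pairwise disjoint locally compact sets together with local sections $s_i\colon E_i\to GX$ of the free path fibration restricted to geodesics. I would like to restrict this data to $K\times K$ and argue that it yields a geodesic motion planner on $K$ with at most $k+1$ domains.

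The key steps are as follows. First, set $E_i' := E_i\cap(K\times K)$ for $i=0,\dots,k$; these are pairwise disjoint, their union is $K\times K$, and each is locally compact because $K$ is locally compact (one should note that $K\times K$ is then locally compact and closed in itself, and $E_i'$ is the intersection of a locally compact subset of $X\times X$ with the closed-in-$X\times X$... no — rather, I would use that $K\times K$ is locally compact Hausdorff, and $E_i' = E_i\cap (K\times K)$ is the intersection of a locally compact subspace of $X \times X$ with a locally compact subspace, which need not be locally compact in general, so the cleaner route is: since $K$ is locally compact it is locally closed in $X$, hence $K\times K$ is locally closed in $X\times X$, so $E_i'=E_i\cap(K\times K)$ is locally closed in the locally compact space $E_i$, hence locally compact). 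Second, I would restrict each section: for $(x,y)\in E_i'$ we have $x,y\in K$, and $s_i(x,y)$ is a geodesic in $X$ from $x$ to $y$; by \emph{convexity of $K$}, this geodesic lies entirely in $K$, so $s_i(x,y)\in GK$. Thus $s_i|_{E_i'}\colon E_i'\to GK$ is well-defined, and it is continuous because $GK\hookrightarrow GX$ is a topological embedding (both carry the compact-open topology, and $PK\hookrightarrow PX$ is an embedding when $K\hookrightarrow X$ is). Hence $s_i|_{E_i'}$ is a local section of $\pi_K\colon GK\to K\times K$. This exhibits a geodesic motion planner on $K$ with $k+1$ domains of continuity, giving $\GC(K)\le k$, and taking $k=\GC(X)$ finishes the proof.

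The main obstacle I anticipate is the point-set bookkeeping around \emph{local compactness}: one must be careful that intersecting the locally compact pieces $E_i$ with $K\times K$ preserves local compactness, which is where the hypothesis that $K$ itself is locally compact is genuinely used (a subset of a locally compact space need not be locally compact, but a locally closed subset is, and $K\times K$ is locally closed in $X\times X$ precisely because $K$ is locally compact Hausdorff). The role of \emph{convexity} is equally essential but more transparent: it is exactly what guarantees that the restricted sections still take values in geodesics of $K$ rather than geodesics of $X$ that merely happen to have endpoints in $K$ while leaving $K$. Once these two points are in hand the argument is a straightforward restriction, with no homotopy-theoretic input needed.
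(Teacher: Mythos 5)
Your proposal is correct and follows essentially the same route as the paper: restrict the decomposition and sections to $K\times K$, use convexity to see that the restricted sections land in $GK$, and check that each $E_i\cap(K\times K)$ is still locally compact. Your local-compactness argument (via local closedness of $K\times K$ in $X\times X$) is just a more explicit version of the paper's one-line appeal to products and intersections of locally compact Hausdorff subspaces.
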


\begin{proof}
A geodesic motion planner for $GX \to X\times X$ restricts to a geodesic motion planner on $GK \to K\times K$, because any geodesic with its endpoints in $K \times K$ has to lie entirely in $K$. Furthermore, for every locally compact $E_i$ in $X\times X$, the intersection $E_i \cap K\times K$ is also locally compact. This is because both the product and the intersection of locally compact Hausdorff spaces is locally compact.
\end{proof}

We will need the following theorem.

\begin{theorem}[Farber \cite{Far03}]\label{thm:sphere-torus}
Let $S^n$ be the $n$-sphere and $T^n$ the $n$-torus. Then
\[
\TC(S^n) = \begin{cases} 1 & \text{if} \;\; n \;\; \text{odd} \\ 2 & \text{if} \;\; n \;\; \text{even} \end{cases}
\]
and
\[\TC(T^n) = n.\]
\end{theorem}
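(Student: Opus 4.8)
The plan is to establish each equality by a matching pair of bounds, using two standard facts from the theory of topological complexity. First, the \emph{cohomological lower bound}: $\TC(X)\ge \mathrm{zcl}(X)$, where the zero-divisors cup-length $\mathrm{zcl}(X)$ is the largest $k$ admitting a nonzero product $\bar u_1\cdots\bar u_k$ of classes $\bar u\in H^*(X\times X;\,\mathbb F)$ (any field $\mathbb F$) lying in the kernel of the cup-product map $\Delta^*\colon H^*(X\times X)\to H^*(X)$; this follows by identifying $\TC(X)$ with the sectional category of the free path fibration $PX\to X\times X$, which is fibre-homotopy equivalent to the diagonal, together with the Schwarz lower bound for sectional category. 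Second, the \emph{upper bounds} $\TC(X)\le\mathrm{cat}(X\times X)\le 2\,\mathrm{cat}(X)$ and Farber's product inequality $\TC(X\times Y)\le\TC(X)+\TC(Y)$.

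For $S^n$, let $u\in H^n(S^n;\Q)$ be a generator and set $\bar u=u\otimes 1-1\otimes u$, which is a zero-divisor since $\Delta^*(\bar u)=u-u=0$. As $u^2\in H^{2n}(S^n)=0$, a short computation with Koszul signs gives $\bar u^{\,2}=-(1+(-1)^n)\,u\otimes u$. If $n$ is even this equals $-2\,u\otimes u\ne 0$, so $\mathrm{zcl}(S^n)\ge 2$ and $\TC(S^n)\ge 2$; this matches $\TC(S^n)\le\mathrm{cat}(S^n\times S^n)\le 2\,\mathrm{cat}(S^n)=2$. If $n$ is odd then $\bar u^{\,2}=0$, so cohomology yields only $\TC(S^n)\ge 1$ (from $\bar u\ne 0$) and the categorical bound only $\le 2$; to close the gap one brings in geometry. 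Choose a nowhere-zero tangent vector field $v$ on $S^n$ (which exists exactly because $n$ is odd) and construct a motion planner with two domains of continuity: on the open, hence locally compact, set $E_0=\{(x,y):y\ne -x\}$ move along the unique shorter great-circle arc from $x$ to $y$, and on the compact set $E_1=\{(x,-x)\}$ move from $x$ to $-x$ along the semicircle leaving $x$ in the direction $v(x)$. Both rules are continuous, so $\TC(S^n)\le 1$ and hence $\TC(S^n)=1$.

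For $T^n=(S^1)^n$, the upper bound follows by iterating the product inequality with $\TC(S^1)=1$ (the case $n=1$ of the sphere argument just given), yielding $\TC(T^n)\le n$. For the lower bound, take $\Q$-coefficients so that $H^*(T^n;\Q)=\Lambda(a_1,\dots,a_n)$ with $|a_i|=1$, and consider the zero-divisors $\bar a_i=a_i\otimes 1-1\otimes a_i$. Expanding $\bar a_1\cdots\bar a_n$ as a sum of $2^n$ terms indexed by subsets $S\subseteq\{1,\dots,n\}$ (taking the summand $a_i\otimes 1$ for $i\in S$ and $-1\otimes a_i$ otherwise), the term attached to $S$ equals $\pm\,a_S\otimes a_{S^{c}}$ in the tensor-product basis of $H^*(T^n\times T^n)$, where $a_S$ denotes the increasing product of the $a_i$ with $i\in S$. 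These basis elements are pairwise distinct for distinct $S$, so no cancellation occurs, and in particular the coefficient of $a_1\cdots a_n\otimes 1$ (the term $S=\{1,\dots,n\}$) is $\pm 1\ne 0$. Hence $\mathrm{zcl}(T^n)\ge n$, giving $\TC(T^n)=n$.

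The only genuinely non-formal point is the odd-sphere upper bound $\TC(S^n)\le 1$: there the cohomological lower bound and the general categorical upper bound leave a gap of one, and bridging it requires the classical fact that odd-dimensional spheres carry a nowhere-zero vector field, which is what makes the antipodal part of the motion planner continuous. Everything else is routine, the only care being the Koszul-sign bookkeeping in the sphere computation and the (elementary) verification of non-cancellation in the torus expansion.
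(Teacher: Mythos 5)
Your proof is correct; note that the paper itself gives no proof of this statement, citing it directly from Farber \cite{Far03}. Your argument (zero-divisor cup-length lower bounds, the $\mathrm{cat}$ and product upper bounds, and the vector-field motion planner for odd spheres) is essentially Farber's original one from that reference, so there is nothing to bridge or compare beyond that.
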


The following example will motivate the proof of Theorem \ref{thm:gap}.

\begin{example}
Let $(S^3,g_m)$ be the result of glueing a hemisphere of the standard sphere $(S^3,g)$ onto each end of the standard cylinder $S^2\times [0,1]$. This space is sometimes known as a \textit{capsule}. After smoothing the edges, $(S^3,g_m)$ becomes a Riemannian manifold diffeomorphic to $S^3$. Because the submanifold $S^2\times \{\frac12\}\simeq S^2$ is convex, by Theorem \ref{thm:convex}: $\GC(S^3,g_m)\ge \TC(S^2)=2>1=\TC(S^3)$.
\end{example}

The proof of Theorem \ref{thm:gap} is essentially a generalization of the previous example.

\begin{proof}[Proof of Theorem \ref{thm:gap}]

Let $(S^{n+1},g)$ be the standard sphere of radius 1, with $n\ge2$.

Let $T^n\hookrightarrow S^{n+1}$ be an embedding of a torus with trivial normal bundle. Choose a tubular neighborhood $N_1$ of $T^n$ in $S^{n+1}$ and further tubular neighborhood $N_2$ around $N_1$. Because the normal bundle is trivial, $N_1$ is homeomorphic to a product $T^n\times (-1,1)$. Construct a new Riemannian metric $g_m$ on $S^{n+1}$ such that it coincides with $g$ outside of $N_2$ and it corresponds to the product metric on $N_1\cong T^n\times (-\text{diam}(T^n),\text{diam}(T^n))$, where diam$(T^n)$ is the diameter of $T^n$, i.e.\ the length of the longest geodesic in $T^n$. It is well known that a metric can be constructed in $N_2 - N_1$ to make $(S^{n+1},g_m)$ into a Riemannian manifold, using bump functions.

Given two points $x$ and $y$ in $( T^n  , g_m|_{T^n} )$, all geodesics between $x$ and $y$ must stay in $N_1$ because a path leaving $N_1$ would have a length strictly greater than the diameter of $(T^n,g_m|_{T^n})$. Furthermore, because $g_m|_{N_1}$ is the product metric it is clear that the geodesics all have to lie on $T^n\times\{0\}\subset N_1$. This shows that $( T^n  , g_m|_{T^n} )$ is convex in $( S^{n+1}  , g_m )$.

By Theorem \ref{thm:convex} we have $\GC(S^{n+1},g_m) \ge \TC(T^n) = n$, while $\TC(S^{n+1})$ equals either 1 or 2. The difference $\GC(S^{n+1},g_m) - \TC(S^{n+1}) \ge n-2$ can be made arbitrarily large by increasing $n$.

We can repeat the same argument with an embedding $T^k\hookrightarrow \R^{k+1}$ instead. In this way, we can construct a metric $g_m'$ on $\R^{k+1}$ such that $\GC(\R^{k+1},g_m') \ge \TC(T^k) = k$.

Linear interpolation yields a continuous motion planner on $\R^{k+1}$, showing that $\TC(\R^{k+1})=0$. 
\end{proof}


%
%

\begin{remark}
The arguments in the previous proof could be used to find lower bounds on the geodesic complexity of Riemannian manifolds constructed using any embedding of a Riemannian manifold into another which has a trivial normal bundle (with arbitrary codimension).

However, these examples could be seen as increasingly artificial, because the construction is very ad hoc. In sections \ref{sec:embedded-torus} and \ref{sec:flat-spheres} we exhibit some more naturally arising examples for which the topological complexity and the geodesic complexity also differ. The technique used in those examples is different from the one used in this section and is introduced in Section \ref{sec:cut-locus}.
\end{remark}

\section{The total cut locus and lower bounds for GC}\label{sec:cut-locus}

Let $(X, \newd)$ be a metric space. The following definition is very useful when studying $\GC(X)$.

\begin{definition}
The {\em total cut locus} of $X$ is the subset $C \subset X \times X$ consisting of all pairs $(x, y)$
for which there is more than one geodesic $\gamma$ from $x$ to $y$. The {\em cut locus of a point} $x\in X$ is the subset $C_x\subset X$ consisting of all $y$ in $X$ such that $(x,y)$ is in $C$.
\end{definition}


The cut locus of a point is treated in many differential geometry textbooks, such as \cite{Lee}. However, the author was not able to find any treatment of the total cut locus (by any name) in the literature. It should be mentioned that when $X$ is a Riemannian manifold the cut locus of a point as defined above does not contain first conjugate points along a unique minimizing geodesic, in contrast to the definition in differential geometry (see \cite{Lee}). The definition above can be found in \cite{BH} for instance.

The geodesic complexity seems to depend entirely on the nature of the total cut locus. Concretely, the following theorem shows that for nice metric spaces there exists a local section of $\pi\colon GX \to X\times X$ on the complement of the total cut locus. This means that finding a geodesic motion planner on the total cut locus is the hardest part.

\begin{definition}
We say that a metric space $(X,\newd)$ is a \textit{geodesic space} if every pair of points $x, y \in X$ is connected by a geodesic. This is clearly a necessary condition for the geodesic complexity of a metric space to be finite.
\end{definition}

\begin{theorem}\label{thm:cut-locus}
Let $(X,\newd)$ be a locally compact geodesic complete metric space. Then the map $\pi\colon GX \to X\times X$ has a local section over the complement of the total cut locus. In particular, if the total cut locus of $X$ is empty, then $\GC(X)=0$.
\end{theorem}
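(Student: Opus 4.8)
The plan is to produce a single local section of $\pi$ defined on all of $(X \times X) \setminus C$, the complement of the total cut locus, and then observe that this complement is itself locally compact (being open in the locally compact Hausdorff space $X \times X$), so it can serve as a single domain of continuity; in particular if $C = \emptyset$ this gives a geodesic motion planner with one domain, i.e. $\GC(X) = 0$. Set-theoretically the section is forced on us: for $(x,y) \notin C$ there is a \emph{unique} geodesic $\gamma_{x,y}$ from $x$ to $y$, so define $s(x,y) = \gamma_{x,y} \in GX$. Everything reduces to showing that $s$ is continuous as a map into $GX \subset PX$ with the compact-open topology.

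The key steps, in order, are as follows. First I would recall that local compactness plus geodesic completeness gives a Hopf--Rinow type statement: closed bounded sets are compact, and more to the point, the space $GX$ of (constant-speed) geodesics, topologized as a subspace of $PX$, has the property that any sequence of geodesics with endpoints converging in $X \times X$ has a subsequence converging (uniformly, since the lengths are bounded by a convergent sequence of distances) to a geodesic with the limiting endpoints. This is the compactness input one gets from Arzel\`a--Ascoli applied to the uniformly Lipschitz family of geodesics together with completeness. Second, I would use this to prove continuity of $s$ by a sequential argument: suppose $(x_n, y_n) \to (x,y)$ with all points off $C$; the geodesics $s(x_n,y_n)$ form a relatively compact family, and any subsequential limit is a geodesic from $x$ to $y$, hence \emph{equals} $\gamma_{x,y}$ because $(x,y) \notin C$; since every subsequence has a further subsequence converging to the same limit $\gamma_{x,y}$, the whole sequence converges, giving continuity of $s$. (If $X \times X$ is not first countable one replaces sequences by nets, but the paper's standing assumption that everything is an ENR — indeed a manifold — makes the metrizable, sequential argument sufficient.) Third, I would note that the compact-open topology on $PX$ agrees with the topology of uniform convergence here because $[0,1]$ is compact and $X$ is a metric space, so "uniform convergence of geodesics" is literally convergence in $GX$, closing the loop. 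Finally, I would record that $(X\times X)\setminus C$ is open, hence locally compact Hausdorff, so it is an admissible single domain of continuity, and conclude.

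The main obstacle is the continuity of $s$, and within that, the crucial point is the \emph{closedness of geodesics under uniform limits with converging endpoints} combined with the uniqueness coming from being off $C$: without geodesic completeness a limit of geodesics could fail to be defined or could "escape", and without the uniqueness hypothesis the subsequential limits need not agree, so both hypotheses of the theorem are genuinely used right here. A secondary, more technical worry is making sure the limit geodesic is parametrized with the correct (constant) speed $\newd(x,y)$ rather than some smaller speed — this is why I work with the constant-speed normalization throughout: if $\gamma_n$ has speed $\newd(x_n,y_n) \to \newd(x,y)$ and $\gamma_n \to \gamma$ uniformly, then $\newd(\gamma(t),\gamma(t')) = \lim \newd(\gamma_n(t),\gamma_n(t')) = \lim \newd(x_n,y_n)|t-t'| = \newd(x,y)|t-t'|$, so $\gamma$ is a genuine geodesic of the right speed and in particular lies in $GX$. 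I expect the rest of the argument to be routine point-set topology.
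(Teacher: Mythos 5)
Your proposal is correct and follows essentially the same route as the paper: the paper likewise deduces properness from completeness plus local compactness (the Hopf--Rinow statement for length spaces, cited from Bridson--Haefliger I.3.8) and then invokes their Lemma I.3.12 that unique geodesics in a proper space vary continuously with their endpoints --- a lemma whose proof is precisely your Arzel\`a--Ascoli argument with the sub-subsequence trick and the check that the limit has the correct constant speed; you have simply unpacked that citation. One caveat: your closing remark that $(X\times X)\setminus C$ is open, hence automatically an admissible single domain of continuity, is false in general --- e.g.\ on the standard embedded torus of Section \ref{sec:embedded-torus}, a pair $(p,q)$ with $q$ a first conjugate endpoint of the cut locus of $p$ has a unique geodesic but is a limit of pairs with two geodesics, so $C$ need not be closed. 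This does not affect the statement you were asked to prove, since the existence of a local section over $X\times X - C$ requires no local compactness of that set, and the conclusion $\GC(X)=0$ only uses the case $C=\emptyset$, where the single domain is all of $X\times X$.
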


\begin{proof}

Recall that a metric space $(X,\newd)$ is said to be \textit{proper} if all closed balls are compact. We say that a metric space $(X,\newd)$ is a \textit{length space} if the distance between every pair of points $x, y \in X$ is equal to the infimum of the length of the paths joining them.

By \cite[Chapter I, 3.8 Corollary]{BH} a length space is proper if and only if it is complete and locally compact. We assume that $X$ is a geodesic space, which means that the distance between two points is realized by the length of a path between them, thus $X$ is a length space. Because $X$ is also assumed to be complete, it is a proper metric space.

Furthermore, \cite[Chapter I, 3.12 Lemma]{BH} states that for a proper metric space the unique geodesics connecting the pairs of points in $X\times X-C$ vary continuously. Thus, there is a local section of $\pi \colon GX \to X\times X$ over $X\times X-C$.

In particular, if $C=\emptyset$, this yields a motion planner with a single motion planning rule $E_0 = X\times X$ (it is well known that a product of locally compact sets is itself a locally compact). This implies that $\GC(X)=0$.
\end{proof}

\begin{remark}
Theorem \ref{thm:cut-locus} states that when the total cut locus of a metric space $X$ is empty, then $\GC(X)=0$. It is worth noting that the converse implication does not hold. For example, consider the case when $X=I^2$ is a square with the $\ell_1$-metric $\newd_1((x_1,x_2),(y_1,y_2)) = |x_1-y_1| + |x_2-y_2|$ (also known as the Manhattan metric). The total cut locus of $(I^2,\newd_1)$ consists of all pairs $((x_1,x_2),(y_1,y_2))$ with $x_1\ne y_1$ and $x_2\ne y_2$. However, $\GC(I^2,\newd_1)=0$ because there is a global section of $\pi$ which maps each pair of points to the geodesic along the straight segment between the points (i.e.\ the unique geodesic between the points under the $\ell_2$ metric).
\end{remark}

\begin{remark}
In \cite[Chapter I, 3.14 Exercise]{BH} Bridson and Haefliger exhibit a complete geodesic space which has an empty total cut locus and which is even (Lipschitz-1) contractible to a point, but for which the map $GX \to X \times X$ does not have a section. There is a unique geodesic over each pair of points, but these geodesics do not vary continuously.

Naturally (in light of the previous theorem), the space given in the example is not locally compact.
\end{remark}

\textbf{Question:} Is there a locally compact space $X$ on which there is a unique geodesic connecting each pair of points $x, y \in X$ but for which the unique geodesics do not vary continuously over $X \times X$ (implying $\GC(X)\ge1$)? Of course, by the previous theorem, such a space would not be complete.
\vspace{.2cm}

We study the total cut locus of several examples in sections \ref{sec:examples}, \ref{sec:embedded-torus} and \ref{sec:flat-spheres} to find lower bounds for the geodesic complexity. These examples seem to point to a general method to find lower bounds for the geodesic complexity of a metric space in cases in which the preimages $\pi^{-1}((x,y))$ are finite for all $(x,y) \in X \times X$, using completely different ideas from the ones used in Section \ref{sec:gap}. We suggest the definition of a level-wise stratified covering below and the proof of Theorem \ref{thm:general-lower-bound} as a framework in which to understand those examples. More work needs to be done in that direction.

As was pointed out in the introduction, the map $\pi \colon GX \to X \times X$ is not a fibration. However, if the preimages $\pi^{-1}((x,y))$ are finite for all $(x,y) \in X \times X$ then the map $\pi$ sometimes is a level-wise stratified covering in the following sense.

\begin{definition}
 A \textit{stratification} of $B$ is a decomposition
\[ B= \bigcup_{i=1}^N S_i \]
into disjoint subsets such that $\overline{S_i} = \bigcup_{j \ge i} S_j$, for $i = 1,\ldots,N$ and such that the following \textit{frontier condition} is satisfied: Let $S$ and $S'$ be path components of each set $S_i$, which we call the {\em strata}. Then $S' \cap \overline{S}\ne \emptyset$ implies that $S' \subset \overline{S}$.

We say that a Hausdorff space $B$ is a \textit{stratified space} if it admits a stratification.
\end{definition}

\begin{remark}
The definition of a stratified space above is slightly stronger than the definition given in \cite[Definition 2.2.7]{Fri}. Friedman's definition would be equivalent to the definition above if we only required  that each $\bigcup_{j \ge i} S_j$ be closed and that the frontier condition be satisfied, but do not require that $\overline{S_i} = \bigcup_{j \ge i} S_j$.
\end{remark}

\begin{definition}
We say that a surjective map $p\colon E \to B$ is a \textit{level-wise stratified covering} if the conditions I, II and III below are satisfied. It takes some work to state the conditions, but they can be roughly summarized as:
\begin{enumerate}
\item The space $B$ is a stratified space and restricting $p$ to each stratum yields a finite-sheeted covering.
\item When one stratum of $B$ lies in the closure of another stratum, the corresponding coverings fit together along their sheets in a nice way.
\end{enumerate}

\textbf{Condition I:} The space $B$ admits a stratification $(S_i)_i$ such that every $x$ in $B$ has a neighborhood $U$ which is a stratified space with a stratification $(U\cap S_i)_i$. Furthermore, the restriction $p|_{S_i}\colon p^{-1}(S_i) \to S_i$ is a covering and the neighborhood $U$ can be chosen such that the map $p|_{U\cap S_i}\colon p^{-1}(U\cap S_i) \to U\cap S_i$ is a trivial covering for each $i$ (even when $x$ is not in $S_i$).

Therefore, given a decomposition $U\cap S_i = \bigsqcup_{\alpha\in\Pi_i} P_{i,\alpha}$ into strata, the total space $p^{-1}(P_{i,\alpha})$ of each covering $p|_{P_{i,\alpha}}$ is a disjoint union $\bigsqcup_{\sigma\in\Sigma_{i,\alpha}} P^\sigma_{i,\alpha}$ of copies of the base space. Here $\Sigma_{i,\alpha}$ parametrizes the set of sheets of the covering $p|_{P_{i,\alpha}}$.

Furthermore, since $(U\cap S_i)_i$ is a stratification, we can associate to it a partially ordered set (or \textit{poset}) $(\mathcal{P}_U,\prec)$. The elements of $\mathcal{P}_U$ are the path-components $P_{i,\alpha}$ and the partial order $\prec$ is given by: $P_{i,\alpha} \prec P_{j,\beta}$ if and only if $\overline{P_{i,\alpha}} \supset P_{j,\beta}$. Anti-symmetry follows from the assumption that $U\cap S_i$ is a stratification.

Note that because of the frontier condition, the closure (within $U$) of each path-component can be written as
\[\overline{P_{i,\alpha}} = \bigcup_{P_{i,\alpha} \prec P_{j,\beta}} P_{j,\beta} .\]

\textbf{Condition II:} We assume that, whenever the closure (within $U$) of a stratum $P_{i,\alpha}$ contains  another stratum $P_{j,\beta}$, then there exist injective maps $\phi_{i,\alpha}^{j,\beta}\colon \Sigma_{i,\alpha} \to \Sigma_{j,\beta}$ specifying how the trivial coverings over the strata $p|_{P_{i,\alpha}}$ and $p|_{P_{j,\beta}}$ fit together: Each sheet $\sigma\in\Sigma_{i,\alpha}$ accumulates at the sheet $\phi_{i,\alpha}^{j,\beta}(\sigma)$. Concretely, the following restriction of the map $p$ needs to be a homeomorphism.
\[\bigcup_{P_{i,\alpha} \prec P_{j,\beta}} P_{j,\beta}^{\phi_{i,\alpha}^{j,\beta}(\sigma)} \; \underset{\cong}{\overset{p|}{\longrightarrow}}\; \bigcup_{P_{i,\alpha} \prec P_{j,\beta}} P_{j,\beta} = \overline{P_{i,\alpha}}\]

Furthermore, we assume that the maps $\phi_{i,\alpha}^{j,\beta}$ satisfy $\phi_{j,\beta}^{k,\gamma} \circ \phi_{i,\alpha}^{j,\beta}= \phi_{i,\alpha}^{k,\gamma}$.

\textbf{Condition III:} The poset $(\mathcal{P}_U,\prec)$ can be decomposed into \textit{levels} such that the partial order $\prec$ is the transitive closure of those order relations going only between adjacent levels, which are called \textit{covering relations}. Each level $i$ consists of the path-components of $U \cap S_i \ne \emptyset$.

\end{definition}

\begin{remark}
Note that in the definition above the stratification $(U \cap S_i)_i$ of $U$ induces a stratification $(p^{-1}(U \cap S_i))_i$ of $p^{-1}(U)$. With these stratifications, the restriction of the level-wise stratified covering $p|_U$ satisfies the conditions in the definition of a stratified covering given in \cite{CuPa}, except that Curry and Patel consider \textit{conically} stratified spaces. However, the definition above imposes stronger restrictions on $p|_U$.
\end{remark}

Note that each relation $P_{i,\alpha} \prec P_{j,\beta}$ corresponds to precisely one map $\phi_{i,\alpha}^{j,\beta}\colon \Sigma_{i,\alpha} \to \Sigma_{j,\beta}$, where we set $\phi_{i,\alpha}^{i,\alpha} = id_{\Sigma_{i,\alpha}}$. Recall that a covering relation is a relation $P_{i-1,\alpha'}\prec P_{i,\alpha}$ between elements that belong to adjacent levels.

\begin{definition}
We say that the poset $(\mathcal{P}_U,\prec)$ is \textit{inconsistent} at $P_{i,\alpha}$ if the intersection of the images of all maps into $P_{i,\alpha}$ corresponding to covering relations is empty
\[\bigcap_{\alpha'}\text{im}(\phi_{i-1,\alpha'}^{i,\alpha})=\emptyset\]
and if there exists at least one such map $\phi_{i-1,\alpha'}^{i,\alpha}$.

We also call the corresponding set of maps inconsistent.
\end{definition}

\begin{definition}
The {\em locally compact sectional category} of a map $p \colon E \to B$, $\enrsecat(p)$ is defined to be the smallest $k$ for which there exists a decomposition into $k+1$ locally compact sets $B = \bigcup_{i=0}^k E_i$ such that there are local sections $s_i \colon E_i \to E$ of $p$.
\end{definition}

\begin{remark}
By definition, the geodesic complexity of a metric space $(X,\newd)$ is the locally compact sectional category of the geodesic path evaluation map $\pi \colon GX \to X \times X$.
\end{remark}

The ideas of the proof in Theorem \ref{thm:general-lower-bound} are used in the sections \ref{sec:examples}, \ref{sec:embedded-torus} and \ref{sec:flat-spheres} to find lower bounds for the geodesic complexity. However, in some examples the map $\pi$ is not understood in sufficient detail to verify all properties of a level-wise stratified covering. Because of this and for the sake of concreteness, we prove the lower bounds in the sections \ref{sec:examples}, \ref{sec:embedded-torus} and \ref{sec:flat-spheres} separately, instead of referencing Corollary \ref{cor:general-lower-bound} below, even if the same general idea is used as in the proof of Theorem \ref{thm:general-lower-bound}. Most of the work in those sections consists of determining an appropriate stratification and understanding how the different coverings over the strata fit together.

Concretely, in the examples of Section \ref{sec:examples} (the flat $n$-tori and the flat Klein bottle) the map $\pi$ is a level-wise stratified covering, as becomes apparent in the proofs. On the other hand, for the embedded torus, which we study in Section \ref{sec:embedded-torus}, the map $\pi$ is not a level-wise stratified covering strictly speaking, because it does not satisfy the conditions II and III at the conjugate pairs. However, it satisfies those conditions for almost all points, which is sufficient for the lower bound. In fact, the argument for the lower bound only requires conditions I and II to be satisfied in some neighborhood of one single point.

Finally, for the flat 2-sphere in Section \ref{sec:flat-spheres} the map $\pi$ seems to be a level-wise stratified covering, but establishing this is a hard problem. Thankfully, in order to determine the lower bound for the geodesic complexity of the flat 2-sphere it will be sufficient to understand the restriction of $\pi$ to a small subset of pairs of points.

\begin{theorem}\label{thm:general-lower-bound}
Let $p\colon E \to B$ be a level-wise stratified covering. Assume that there exists an open set $U$ in $B$ such that the poset $(\mathcal{P}_U,\prec)$ is finite and fulfils the conditions I, II and III. Furthermore, assume that the poset $\mathcal{P}_U$ is inconsistent at every element, except for elements in the bottom level (where the set of incoming maps is empty). Let $N$ be the number of levels of $\mathcal{P}_U$. 

Then $\enrsecat(X)\ge N-1$.

Furthermore, if each covering $p|S_i$ is trivial, each intersection $U \cap S_i$ non-empty and each $S_i$ is locally compact, then $\enrsecat(X) = N-1$.
\end{theorem}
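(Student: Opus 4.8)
The plan is to prove the lower bound $\enrsecat(p)\ge N-1$ by contradiction, following the standard "dimension of cover" philosophy from topological complexity but replacing homotopical obstructions by the combinatorics of the poset $\mathcal{P}_U$. Suppose we had a covering $B=\bigcup_{i=0}^{N-2}E_i$ into locally compact sets with local sections $s_i\colon E_i\to E$. Restricting everything to the open set $U$, we obtain $N-1$ locally compact sets covering $U$, each carrying a section of $p|_U$. The key point is that over each stratum $P_{i,\alpha}$ the covering is trivial with sheet set $\Sigma_{i,\alpha}$, so a section of $p$ over a connected subset of $P_{i,\alpha}$ is nothing but a choice of a single sheet $\sigma\in\Sigma_{i,\alpha}$; and Condition II tells us exactly how sheets over a stratum are "forced" by sheets over strata in its closure, via the compatible maps $\phi_{i,\alpha}^{j,\beta}$.

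First I would record the basic rigidity lemma: if $E_i$ is locally compact and $s_i$ is a section of $p$ over $E_i$, and $x\in\overline{P_{i,\alpha}}\cap E_i$ with $x$ lying in a deeper stratum $P_{j,\beta}$ (so $P_{i,\alpha}\prec P_{j,\beta}$), then — because $E_i$ is locally compact, hence locally closed, and $p|_U$ is a stratified covering with the accumulation structure of Condition II — the sheet of $s_i$ at $x$ must lie in the image of $\phi_{i,\alpha}^{j,\beta}$ whenever points of $P_{i,\alpha}$ accumulating to $x$ also lie in $E_i$. More precisely, local compactness of $E_i$ forces: if $x\in E_i\cap P_{j,\beta}$ then a whole neighborhood-germ worth of the strata above $x$ meets $E_i$ in a relatively closed set, and the continuity of $s_i$ together with the homeomorphism displayed in Condition II pins down the sheet at $x$ to lie in the relevant image. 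I would isolate this as a lemma, since it is the engine of the whole argument.

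Next comes the counting/pigeonhole step, which is where the inconsistency hypothesis enters. Pick a point $x_0$ in the bottom stratum (the unique deepest point-type, or rather a point whose local poset is all of $\mathcal{P}_U$); its neighborhood $U$ realizes the full $N$-level poset. Walk up the levels: $x_0$ belongs to some $E_{i_N}$ in the top level $N$; by the rigidity lemma its sheet is forced to come from $\operatorname{im}(\phi^{\text{top}}_{\text{level }N-1,\alpha'})$ for whichever strata $P_{N-1,\alpha'}$ in level $N-1$ both contain $x_0$ in their closure and meet $E_{i_N}$ near $x_0$. If $E_{i_N}$ met enough of those lower strata, inconsistency at $x_0$'s stratum would make the intersection of those images empty, a contradiction; hence $E_{i_N}$ must \emph{miss} (near $x_0$) at least one stratum of level $N-1$ — more carefully, one shows the set $E_{i_N}$ cannot contain points of $P_{N-1,\alpha'}$ arbitrarily close to $x_0$ for at least one $\alpha'$. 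Iterating this down the levels, each of the $N-1$ sets $E_0,\dots,E_{N-2}$ can "cover" the points near $x_0$ in at most one level's worth of strata in a consistent way, but there are $N$ levels meeting every neighborhood of $x_0$; the pigeonhole then forces one level near $x_0$ to be uncovered, contradicting $\bigcup E_i=B$. Making the "near $x_0$" bookkeeping precise — choosing a shrinking neighborhood basis so that the relatively-closed/relatively-open dichotomy forced by local compactness is stable — is the step I expect to be the main obstacle, and it is essentially the same bookkeeping used in the explicit examples of Sections~\ref{sec:examples}, \ref{sec:embedded-torus} and \ref{sec:flat-spheres}.

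For the upper bound $\enrsecat(p)\le N-1$ under the extra hypotheses (each $p|_{S_i}$ trivial, each $U\cap S_i\ne\emptyset$, each $S_i$ locally compact), I would simply take $E_i' := S_{i+1}$ for $i=0,\dots,N-1$ — wait, reindex: the strata $S_1,\dots,S_N$ themselves (there are $N$ of them) give a decomposition $B=\bigsqcup_{i=1}^N S_i$ into $N$ locally compact pieces, and on each $S_i$ triviality of the covering $p|_{S_i}$ furnishes a global section (pick a sheet). That is $N$ sets, giving $\enrsecat(p)\le N-1$, matching the lower bound. I would remark that the hypothesis $U\cap S_i\ne\emptyset$ is what guarantees the lower-bound poset genuinely has $N$ levels so that the two bounds meet, and that local compactness of the $S_i$ is needed for them to be admissible domains in the definition of $\enrsecat$.
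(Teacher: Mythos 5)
Your upper bound is exactly the paper's: the $N$ strata $S_i$ are locally compact, each carries a section because $p|_{S_i}$ is trivial, and $U\cap S_i\ne\emptyset$ guarantees there are exactly $N$ of them; that part is fine. You have also correctly identified the three ingredients of the lower bound: continuity of a local section, Condition II forcing the sheet of a limit to lie in the image of the appropriate $\phi$, and inconsistency producing an empty intersection of such images.

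However, the quantitative mechanism you propose for the lower bound has a genuine gap. Your pigeonhole claim --- that each of the $N-1$ sets $E_0,\dots,E_{N-2}$ can ``cover the points near $x_0$ in at most one level's worth of strata in a consistent way,'' so that some level near $x_0$ is left uncovered --- is neither proved nor true in general: a single locally compact set with a section can perfectly well meet several levels near a given point (compare the sets $\tilde E_2=S_2^c\sqcup S_3^A$ and $\tilde E_3=S_3^c\sqcup S_4^A$ in the Klein bottle upper bound, and locally one can arrange a section on a set meeting $S_1$ and part of $S_2$ by removing a one-sided slab). The statement that actually carries the argument, and which the paper proves by induction \emph{up} the levels, is a closure-counting statement: every point of a level-$j$ stratum lies in the closure of at least $j$ distinct sets $E_i$; applied at level $N$ this forces at least $N$ sets, i.e.\ $\enrsecat(p)\ge N-1$ (the contradiction is not that points go uncovered). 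The induction step is exactly the part you defer to ``bookkeeping'': one assumes a level-$(j+1)$ point $y\in E_0$ has a neighborhood meeting too few $E_i$, uses the induction hypothesis plus a diagonal argument to build a sequence in $E_0$ converging to $y$ that shadows a chosen level-$j$ stratum $P_{j,\beta}\prec P_{j+1,\beta'}$ while possibly lying in a still more generic stratum $P_{i,\alpha}$, and then uses the composition identity $\phi_{j,\beta}^{j+1,\beta'}\circ\phi_{i,\alpha}^{j,\beta}=\phi_{i,\alpha}^{j+1,\beta'}$ to conclude that the sheet of $s_0(y)$ lies in $\mathrm{im}\,\phi_{j,\beta}^{j+1,\beta'}$; running over all such $\beta$ and invoking inconsistency gives the contradiction. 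Without this inductive statement and the diagonal/composition step, the ``rigidity lemma'' alone does not yield the count. A smaller point: local compactness is not what drives the lower bound (the paper's proof of this theorem never uses it; it only enters the definition of $\enrsecat$ and, elsewhere, the final step of the Klein bottle computation), so leaning on local closedness as the ``engine'' of rigidity is misplaced --- continuity of the sections together with Condition II is what does the work.
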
 

\begin{proof}

$ $\newline

\textbf{Lower bound:}
\vspace{.1cm}

Let $B = \bigcup_{i=0}^k E_i$ be a disjoint decomposition such that there is a local section of $p$ over each $E_i$. Let $U$ be an open set satisfying the assumptions of the theorem. We need to show that $k\ge N-1$.


The proof proceeds by induction on the levels of the poset. The induction starts with the observation that every element of $P_{1,\beta}$ is in the closure of at least 1 $E_i$, because the sets $E_i$ cover $B$.

The induction step is the following: Assume that, for a given level $j$, every element of every path-component $P_{j,\beta}$ is in the closure of at least $j$ different sets $E_i$. Then every element of every path-component $P_{j+1,\beta'}$ in the next level is in the closure of at least $j+1$ different sets $E_i$.

The induction culminates in the statement that every element of $P_{N,\beta}$ is in the closure of $N$ different sets $E_i$. This implies that the decomposition above contains at least $N$ sets $E_i$ and thus $k\ge N-1$.

We will now prove the induction step. Assume, for the sake of contradiction, that there is an element $y$ of a path-component $P_{j+1,\beta'}$ which is not in the closure of $j+1$ different sets $E_i$; that is, there is a neighborhood $V$ which is contained in $\bigcup_{i=0}^{j-1} E_i$. We may assume that $y$ is in $E_0$.

By assumption, there exists a $P_{j,\beta}$ such that $P_{j,\beta} \prec P_{j+1,\beta'}$. Let $(a_n)$ be a sequence in $P_{j,\beta} \cap V$ converging to $y$.

By the induction assumption, every element of $P_{j,\beta} \cap V$ is in the closure of $E_i$ for each $i=0,\ldots,l-1$. In particular, every element of $P_{j,\beta} \cap V$ is in the closure of $E_0$. For each $n$, let $(b^n_{m})$ be a sequence in $E_0\cap V$ converging to $a_n$.

Setting $c_n = b^n_n$ yields a sequence $(c_n)$ in $E_0\cap V$ converging to $y$. We may assume that $(c_n)$ lies entirely in one path-component $P_{i,\beta}$, since there are only finitely many path-components.

Let $s_0\colon E_0 \to E$ be a local section. By continuity, the sequence of lifts $(s_0(c_n))$ needs to converge to $s_0(y)$ as $n$ goes to infinity.

It follows from Condition II in the definition of a level-wise stratified covering that a lift $(\tilde c_n)$ of $(c_n)$ converges to a lift $\tilde{y}$ of $y$ if and only if almost all $\tilde c_n$ lie in the same sheet $P_{i,\alpha}^\sigma$ and $\tilde{y}$ lies in the sheet $P_{j+1,\beta'}^{\phi_{i,\alpha}^{j+1,\beta'}(\sigma)}$.

It also follows from Condition II that \[\phi_{j,\beta}^{j+1,\beta'} \circ \phi_{i,\alpha}^{j,\beta}= \phi_{i,\alpha}^{j+1,\beta'}.\] Therefore, $\phi_{i,\alpha}^{j+1,\beta'}(\sigma)$ is contained in the image of $\phi_{j,\beta}^{j+1,\beta'}$, for all possible $\beta$.

We can carry out the previous argument with every $P_{j,\beta}$ such that $P_{j,\beta} \prec P_{j+1,\beta'}$. Therefore, the index of the sheet containing the lift $s_0(y)$ needs to be contained in the image of each $\phi_{j,\beta}^{j+1,\beta'}$ (for all possible $\beta$, and with $j+1$ and $\beta'$ fixed). In particular the images of those maps $\phi_{j,\beta}^{j+1,\beta'}$ need to have a non-empty intersection.

However, by the assumption of the theorem the set of those maps $\phi_{j,\beta}^{j+1,\beta'}$ is inconsistent. This means that the intersection of the images of those maps is empty. This yields a contradiction.

\vspace{.2cm}
\textbf{Upper bound:}
\vspace{.1cm}

If the coverings over each stratum $S_i$ are trivial, then there exists a local section of $p\colon E \to B$ over each $S_i$. Assuming that every intersection $U \cap S_i$ is non-empty, each $S_i$ corresponds to exactly one of the $N$ levels of the poset $\mathcal{P}_U$. This implies that there are $N$ many $S_i$ and thus $\GC(X) \le N-1$.
\end{proof}

The following corollary is immediate.

\begin{corollary}\label{cor:general-lower-bound}
Let $\pi\colon GX \to X\times X$ be the evaluation map on geodesic paths and assume it is a level-wise stratified covering satisfying the conditions of the previous theorem. Then $\GC(X)\ge N-1$. Furthermore, if each covering $p|S_i$ is trivial, each intersection $U \cap S_i$ non-empty and each $S_i$ is locally compact, then $\GC(X) = N-1$.
\end{corollary}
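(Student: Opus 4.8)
The plan is to specialize Theorem~\ref{thm:general-lower-bound} to the map $p = \pi \colon GX \to X \times X$. First I would invoke the Remark following the definition of $\enrsecat$, which records that the geodesic complexity of $(X,\newd)$ is \emph{by definition} the locally compact sectional category of the geodesic path evaluation map, i.e.\ $\GC(X) = \enrsecat(\pi)$. Hence any bound on $\enrsecat(\pi)$ transfers immediately to a bound on $\GC(X)$, in both directions.

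Next, since by hypothesis $\pi$ is a level-wise stratified covering and there is an open set $U \subset X \times X$ for which the poset $(\mathcal{P}_U,\prec)$ is finite, satisfies Conditions I, II and III, and is inconsistent at every non-bottom element, the hypotheses of Theorem~\ref{thm:general-lower-bound} are met verbatim with $E = GX$ and $B = X \times X$. Applying that theorem gives $\enrsecat(\pi) \ge N - 1$, where $N$ is the number of levels of $\mathcal{P}_U$, and therefore $\GC(X) \ge N-1$. For the equality statement, the extra assumptions --- that each covering $\pi|_{S_i}$ over a stratum is trivial, that each $U \cap S_i$ is non-empty, and that each $S_i$ is locally compact --- are precisely those required by the ``upper bound'' half of Theorem~\ref{thm:general-lower-bound}, which yields $\enrsecat(\pi) \le N-1$; combining the two inequalities gives $\GC(X) = N-1$.

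There is essentially no obstacle here: the only content is the identification $\GC(X) = \enrsecat(\pi)$, which has already been made, after which the corollary is a direct reformulation of Theorem~\ref{thm:general-lower-bound}. The one point worth flagging is that Theorem~\ref{thm:general-lower-bound} is phrased for an abstract surjection $p\colon E \to B$, so one should confirm that $\pi$ is surjective onto $X \times X$ --- but this holds as soon as $X$ is a geodesic space, and is in any case built into the definition of a level-wise stratified covering, which requires the map to be surjective. No further work is needed.
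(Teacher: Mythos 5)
Your proposal is correct and matches the paper's treatment: the paper simply declares the corollary ``immediate'' from Theorem~\ref{thm:general-lower-bound}, and your write-up spells out exactly that specialization, using the identification $\GC(X)=\enrsecat(\pi)$ already recorded in the remark. Nothing further is needed.
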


\begin{remark}
Notice that all that was needed in the proof of the lower bound is the stratification of the neighborhood $U$. In fact, we only need to be able to find a series of nested sequences such as the ones used in the induction argument.

Understanding the global picture is very useful to find upper bounds, but in some cases a local argument is more feasible and also sufficient for the lower bound; see Section \ref{sec:flat-spheres}.
\end{remark}

The following is the simplest example of a map $\pi \colon GX \to X \times X$ being a level-wise stratified covering.

\begin{example}\label{ex:circle}
Let $S^1$ be the unit circle with the standard Riemannian metric. The preimage of a pair $(x,y)$ under the map $\pi\colon GS^1 \to S^1 \times S^1$ is either the unique geodesic from $x$ to $y$ (when $x$ and $y$ are not antipodal) or two geodesics, one going clockwise and one counter-clockwise (when $x$ and $y$ are antipodal). Restricting the map $\pi$ to the total cut locus clockwise $C$ yields a trivial 2-sheeted covering with the sheet $r$ consisting of all the clockwise half-circle geodesics and $l$ consisting of all the counterclockwise half-circle geodesics. Restricting the map $\pi$ to $S^1\times S^1-C$ yields a 1-sheeted covering (i.e.\ a homeomorphism).

The map $\pi\colon GS^1 \to S^1 \times S^1$ is a level-wise stratified covering. The stratification of $S^1 \times S^1$ is given by $S_1 = S^1 \times S^1-C$ and $S_2 = C$.

The total cut locus $C$ can be visualized as a diagonal circle inside the torus $S^1 \times S^1$. As we noted, there are two sheets over each pair $(x,y)$ in $C$. We can approach such a pair from within the complement of $C$ from two sides: on one side the unique geodesic goes in the clockwise direction and on the other side the unique geodesic goes in the counter-clockwise direction. The map $\pi$ is the result of gluing a 1-sheeted covering over the complement of $C$ with a 2-sheeted covering over $C$: The single sheet over $S^1\times S^1-C$ converges to a different sheet over $C$ when approaching from each side.

Let $(x,y)$ be in $C$ and let $U$ be a small open ball around $(x,y)$. Then $U \cap S_1 = P_{1,r} \sqcup P_{1,l}$, where $P_{1,r}$ consists of pairs for which the unique geodesic goes in the clockwise direction and $P_{1,l}$ consists of pairs for which the unique geodesic goes in the counter-clockwise direction. Those two strata are separated by the stratum $U \cap S_2 = P_2$.

The preimages $p^{-1}(P_{1,r}) = P_{1,r}^{\sigma_r}$ and $p^{-1}(P_{1,l}) = P_{1,l}^{\sigma_l}$ consist of a single sheet. These approach the two sheets $p^{-1}(P_2) \cong P_2^r \sqcup P_2^l$ according to $\phi_{1,r}^2(\sigma_r) = r$ and $\phi_{1,l}^2(\sigma_l) = l$.

Because the set of maps $\{ \phi_{1,r}^2 , \phi_{1,l}^2 \}$ is inconsistent, the previous theorem implies that $\GC(S^1) \ge 1$. This lower bound also follows from the well known fact $\TC(S^1)=1$ (see Theorem \ref{thm:sphere-torus}). However, this serves as a very simple example of a more general method to get lower bounds for the geodesic complexity of a space.

For the sake of concreteness, we will give (a very simple special case of) the proof of the lower bound in this example. Recall that a local section $s_i\colon E_i \to GS^1$ consists of a continuous choice of geodesic over each pair in $E_i$. No pair $(x,y)$ in $C$ can lie in the interior of a set $E_i$: Otherwise we could construct two sequences $(a_n)$ and $(b_n)$, in $P_{1,r} \cap E_i$ and $P_{1,l} \cap E_i$ respectively, converging to $(x,y)$. By continuity, $s_i(a_n)$ and $s_i(b_n)$ need to converge to $s_i((x,y))$. However, this is impossible because the limit of $s_i(a_n)$ lies in the sheet $r$ (is a clockwise path) and the limit of $s_i(b_n)$ lies in the sheet $s$ (is a counterclockwise path). This implies that $\GC(S^1)\ge1$.

Finally, $\GC(S^1)=1$ because there exists an obvious geodesic motion planner with two domains of continuity $E_0=S^1\times S^1 - C$ and $E_1=C$, which was already constructed by Farber to prove $\TC(S^1)=1$ \cite{Far03} (let $s_1((x,y))$ be the geodesic going clockwise, for instance).
\end{example}

\begin{remark}\label{rem:opens}
The previous example makes clear that defining geodesic complexity using an open cover (as opposed to a pairwise disjoint decomposition into locally compact sets) would not work, given that if a set contains a point of the total cut locus in its interior then it does not admit a local section of $\pi$.
\end{remark}

\section{Spaces for which TC=GC}\label{sec:examples}

Some of the first spaces for which Farber computed the topological complexity are the spheres, see Theorem \ref{thm:sphere-torus}. To prove the optimal upper bound for $\TC(S^n)$ he constructed an explicit motion planner. Because that motion planner is geodesic (for the standard Riemannian metric on the sphere), the upper bound extends to $\GC(S^n)$. Together with the lower bound $\TC(X)\le\GC(X)$ this yields:

\begin{proposition}
Let $S^n$ be the standard $n$-dimensional sphere. Then $\GC(S^n)=\TC(S^n)$.
\end{proposition}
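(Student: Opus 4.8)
The plan is to establish the two inequalities $\TC(S^n)\le\GC(S^n)$ and $\GC(S^n)\le\TC(S^n)$ and conclude equality, taking the value of $\TC(S^n)$ from Theorem~\ref{thm:sphere-torus}. The first inequality is free: a geodesic motion planner is in particular a motion planner, so by definition $\TC(X)\le\GC(X,\newd)$ for any metric space. Hence the whole task reduces to producing a geodesic motion planner on $S^n$ with $\TC(S^n)+1$ domains of continuity, and the idea is simply that Farber's original motion planner already does the job, because every path it uses is a great-circle arc, i.e.\ a geodesic in the sense of this paper.

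Concretely, I would assemble the planner as follows. The total cut locus of the round sphere is $C=\{(x,-x):x\in S^n\}$, the antipodal diagonal, since two points are joined by more than one minimizing geodesic exactly when they are antipodal. Since $S^n$ is compact, hence locally compact and geodesically complete, Theorem~\ref{thm:cut-locus} provides a local section $s_0\colon E_0\to GS^n$ over the open set $E_0=S^n\times S^n\setminus C$, namely the unique shorter great-circle arc. To cover $C$, fix a tangent vector field $v$ on $S^n$: nowhere zero if $n$ is odd, and vanishing at exactly one point $x_0$ if $n$ is even. For $n$ odd, put $E_1=C$ and let $s_1$ send $(x,-x)$ to the half great-circle from $x$ to $-x$ with initial velocity in the direction of $v(x)$; this is continuous since $v$ never vanishes. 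For $n$ even, put $E_1=C\setminus\{(x_0,-x_0)\}$ with the same rule for $s_1$ (continuous away from $x_0$, where $v$ is locally nonzero), and $E_2=\{(x_0,-x_0)\}$ with $s_2$ any half great-circle. This uses $2$ sets for $n$ odd and $3$ for $n$ even, i.e.\ $\TC(S^n)+1$ in both cases by Theorem~\ref{thm:sphere-torus}.

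It then remains to check two things. First, each path above is a minimizing great-circle arc; parametrized proportionally to arc length it has constant speed, so it is a geodesic as defined here, and each $s_i$ is a continuous map into $PS^n$ whose image lies in $GS^n$, hence continuous into the subspace $GS^n$. If one prefers to keep Farber's original parametrization, Theorem~\ref{thm:reparametrization} lets one reparametrize to constant speed without disturbing continuity. Second, the pieces are locally compact as subspaces of $S^n\times S^n$: $E_0$ is open; $C$ is closed, being the graph of the antipodal map; $C\setminus\{(x_0,-x_0)\}$ is locally closed, hence locally compact; and $\{(x_0,-x_0)\}$ is a point. Therefore $\GC(S^n)\le\TC(S^n)$, and combined with $\TC(S^n)\le\GC(S^n)$ this gives equality.

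I do not anticipate a genuine obstacle. The only points needing a little care are the local-compactness check for the even-dimensional middle stratum, where one removes a point and must invoke that a locally closed subset of a locally compact Hausdorff space is locally compact, and the bookkeeping to see that the great-circle arcs used are, possibly after the harmless reparametrization of Theorem~\ref{thm:reparametrization}, constant-speed geodesics; neither is substantive, which is exactly why the equality $\GC(S^n)=\TC(S^n)$ holds with no extra work beyond Farber's construction.
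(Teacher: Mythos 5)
Your proposal is correct and follows essentially the same route as the paper: the lower bound is the trivial inequality $\TC(X)\le\GC(X)$, and the upper bound comes from observing that Farber's explicit motion planner for $S^n$ (unique short arc off the antipodal set, plus a vector-field-directed half great circle on it) consists entirely of geodesics, hence bounds $\GC(S^n)$ by $\TC(S^n)$. The extra details you supply (local compactness of the pieces, constant-speed reparametrization) are accurate and consistent with the paper's brief argument.
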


Not long after the computation of $\TC(S^n)$, Farber, Tabachnikov and Yuzvinsky uncovered a surprising link between the topological complexity $\TC(\R P^n)$ of real projective spaces $\R P^n$ and their immersion dimension, given in the following theorem. Recall that the immersion dimension $\text{Immdim}(M)$ of a smooth manifold $M$ is the smallest $k$ such that $M$ can be immersed into $\R^k$.

\begin{theorem}[Farber--Tabachnikov--Yuzvinsky \cite{FTY}]\label{thm:projective}
Let $\R P^n$ be the $n$-dimensional projective space. Then
\[
\TC(\R P^n) = \begin{cases} n & \text{if} \;\; n =1,3,7  \\ \rm{Immdim}(M) & \text{otherwise} \end{cases}.
\]
\end{theorem}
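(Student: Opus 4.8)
The plan is to reconstruct the argument of Farber, Tabachnikov and Yuzvinsky, whose engine is a dictionary between motion planners on $\R P^n$ and the classical theory of axial maps and immersions of projective space. The central claim I would establish first is the homotopy-theoretic reduction: $\TC(\R P^n)\le k$ if and only if there is an \emph{axial map of type $(1,1)$}, that is, a map $\mu\colon\R P^n\times\R P^n\to\R P^k$ whose restriction to each axis $\R P^n\times\{\ast\}$ and $\{\ast\}\times\R P^n$ is homotopic to the standard inclusion; equivalently, after passing to double covers, a nonsingular bilinear map $\R^{n+1}\times\R^{n+1}\to\R^{k+1}$. The ``$\Leftarrow$'' direction is a direct construction of a motion planner with $k+1$ domains: one partitions $\R P^n\times\R P^n$ according to which coordinate of $\mu$ is the first to be nonzero, and over each piece connects $[x]$ to $[y]$ by the rotation in the $2$-plane $\mathrm{span}(x,y)$ whose sense is read off from that coordinate; nonsingularity of $\mu$ is precisely what makes the pieces a cover and the sections continuous. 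The ``$\Rightarrow$'' direction is the heart of the matter: here I would use Farber's description of $\TC$ as the sectional category of the free path fibration, together with the fact that $\pi_1(\R P^n)=\Z_2$ for $n\ge2$ (so that a local section over a domain selects a well-defined homotopy class of path, i.e.\ a point of $\pi_0$ of the fiber $\Omega\R P^n$), to assemble the $k+1$ local sections into a section of the $(k+1)$-fold fiberwise join and then identify this with an axial map into $\R P^k$.

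With the reduction in hand I would bring in the classical input. The cohomological lower bound is immediate: in $H^\ast(\R P^n\times\R P^n;\Z_2)$ the class $\bar a=a_1+a_2$ is a zero-divisor and $\bar a^{\,n}\ne0$ since it contains the monomial $a_2^{\,n}$, so the zero-divisor cup-length estimate gives $\TC(\R P^n)\ge n$. Next, James's immersion theorem states that $\R P^n$ immerses into $\R^k$ if and only if there is a \emph{symmetric} nonsingular bilinear map $\R^{n+1}\times\R^{n+1}\to\R^{k+1}$, and a further classical fact about bilinear maps is that in the range $k\ge n+1$ the existence of such a symmetric map is equivalent to the existence of an arbitrary nonsingular bilinear map of the same dimensions. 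Hence, for $k\ge n+1$, an axial map $\R P^n\times\R P^n\to\R P^k$ exists precisely when $\R P^n$ immerses into $\R^k$, so if $\TC(\R P^n)\ge n+1$ then $\TC(\R P^n)=\text{Immdim}(\R P^n)$.

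It remains to analyze the boundary case, which is exactly where the exceptions arise. Since $\TC(\R P^n)\ge n$, the only way the previous paragraph can fail to apply is $\TC(\R P^n)=n$, and by the reduction this happens if and only if there is an axial map $\R P^n\times\R P^n\to\R P^n$, equivalently a nonsingular bilinear multiplication on $\R^{n+1}$ — a real division algebra structure. By the Bott--Milnor--Kervaire theorem (equivalently, Adams's solution of the Hopf-invariant-one problem) this exists only when $n+1\in\{1,2,4,8\}$, and for $n=1,3,7$ the multiplications of $\C$, $\Ha$ and $\mathbb{O}$ realize it, forcing $\TC(\R P^n)\le n$ and hence $\TC(\R P^n)=n$. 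As $\R P^n$ never immerses into $\R^n$, these three values of $n$ are exactly those at which $\TC(\R P^n)$ and $\text{Immdim}(\R P^n)$ differ, which is the assertion of the theorem; for $n=1$ this agrees with $\TC(\R P^1)=\TC(S^1)=1$ from Theorem~\ref{thm:sphere-torus}.

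The main obstacle is the homotopy-theoretic reduction of the first paragraph, specifically its ``$\Rightarrow$'' direction: an arbitrary motion planner records only ordered pairs and carries no a priori algebraic structure, so extracting a genuine axial map from it requires the full weight of Farber's sectional-category machinery and a careful identification of the relevant double covers. The remaining work is classical but substantial — James's immersion theorem, the symmetric-versus-general comparison for nonsingular bilinear maps in the range $k\ge n+1$, and the bookkeeping at $k=n$ that pins down $n=1,3,7$ — and would be cited rather than reproved.
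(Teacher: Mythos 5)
Your overall architecture is exactly that of Farber--Tabachnikov--Yuzvinsky, which is all the paper itself does here (the theorem is quoted from \cite{FTY} without proof): reduce $\TC(\R P^n)$ to the existence of axial maps, equivalently of $\Z/2$-biequivariant nonsingular maps $\R^{n+1}\times\R^{n+1}\to\R^{k+1}$; use the zero-divisor class $\bar a$ with $\bar a^{\,n}\ne0$ for the lower bound $\TC\ge n$; feed in the classical immersion criterion in the range $k>n$; and settle the boundary case $k=n$ by Hopf invariant one. That skeleton is correct. However, two of the ``classical facts'' you plan to cite in your second paragraph are not the actual theorems, and as stated they are false. A symmetric nonsingular \emph{bilinear} map $f\colon\R^{n+1}\times\R^{n+1}\to\R^{k+1}$ yields, by Hopf's polarization argument ($f(x-ty,x+ty)=f(x,x)-t^2f(y,y)$), an injective map $[x]\mapsto f(x,x)/|f(x,x)|$ and hence an \emph{embedding} $\R P^n\hookrightarrow S^k$, not merely an immersion into $\R^k$. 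So for $n=2$, $k=3$ no such symmetric map $\R^3\times\R^3\to\R^4$ can exist (since $\R P^2$ does not embed in $S^3$), even though $\R P^2$ immerses in $\R^3$ and a non-symmetric nonsingular bilinear map $\R^3\times\R^3\to\R^4$ does exist (multiplication of imaginary quaternions inside $\Ha$). This single example refutes both your ``James's immersion theorem'' as stated and your claimed symmetric-versus-general equivalence in the range $k\ge n+1$. The correct classical input, and the one FTY actually use, is: for $k>n$, an axial map $\R P^n\times\R P^n\to\R P^k$ exists if and only if $\R P^n$ immerses in $\R^k$; that should be cited directly, with no symmetry condition anywhere.

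A related, smaller point: the dictionary between axial maps and nonsingular maps produces \emph{continuous} biequivariant nonsingular maps, not bilinear ones. This matters for your ``$\Rightarrow$'' direction, since a motion planner (or an immersion) can never be made to yield bilinear data; if bilinearity were genuinely needed your reduction would be unavailable. Fortunately nothing in the argument needs it: the immersion criterion above is about continuous axial maps, and at $k=n$ a continuous nonsingular map $\R^{n+1}\times\R^{n+1}\to\R^{n+1}$ already forces $n+1\in\{1,2,4,8\}$ by Adams's Hopf-invariant-one theorem (the Bott--Milnor--Kervaire statement covers only the bilinear case). With ``bilinear'' deleted throughout and the immersion criterion cited in its axial-map form, your outline becomes a faithful reconstruction of the FTY proof, including the exceptional values $n=1,3,7$ coming from $\C$, $\Ha$ and the octonions.
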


Farber, Tabachnikov and Yuzvinsky give a motion planner in \cite[Theorem 7.3]{FTY} which realizes the upper bound $\TC(\R P^n)\le\rm{Immdim}(\R P^n)$. While that motion planner is not geodesic, it can be easily modified to become geodesic. The same ideas can be used to give optimal geodesic motion planners in the cases where $n$ equals 1, 3 or 7. Just as in the case of spheres we get:

\begin{proposition}
Let $\R P^n$ be the standard $n$-dimensional real projective space. The $\GC(\R P^n)=\TC(\R P^n)$.
\end{proposition}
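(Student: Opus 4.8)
The plan is to establish the two inequalities separately. The lower bound $\TC(\R P^n)\le\GC(\R P^n)$ is free, as noted in the excerpt: every geodesic motion planner is in particular a motion planner along continuous paths, so the locally compact sectional category of $\pi\colon G\R P^n\to\R P^n\times\R P^n$ dominates that of the free path fibration. Hence the only content is the upper bound $\GC(\R P^n)\le\TC(\R P^n)$, which by Theorem \ref{thm:projective} means producing a geodesic motion planner with $\mathrm{Immdim}(\R P^n)+1$ domains of continuity when $n\ne 1,3,7$, and with $n+1$ domains when $n=1,3,7$.

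The first step is to recall the Farber--Tabachnikov--Yuzvinsky construction from \cite[Theorem 7.3]{FTY}. They build the motion planner from an immersion $\R P^n\looparrowright\R^{k}$ (with $k=\mathrm{Immdim}(\R P^n)$), which at the level of tangent data supplies, over a suitable stratification of $\R P^n\times\R P^n$, a continuous choice of unit vector field realizing the "direction of motion" from one line to another in $\R^{n+1}$. Concretely, a pair of lines $(\ell_1,\ell_2)$ that are not orthogonal determines a unique shortest geodesic in $\R P^n$ (the image of the unique minimal rotation in the plane spanned by $\ell_1,\ell_2$), and on the orthogonal pairs one uses the immersion to break the symmetry. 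The second step is to observe that their assignment is, after reparametrization, already a choice of \emph{shortest} path: each rule sends $(\ell_1,\ell_2)$ to the rotation-in-a-plane path, which is length-minimizing for the standard round metric on $\R P^n$ (whose geodesics are exactly the images of great circles traversed through an angle $\le\pi/2$). Where the original construction might parametrize non-proportionally to arc length, one invokes Theorem \ref{thm:reparametrization} (the "drop constant speed" equivalence referenced in the excerpt, cited there as \ref{thm:reparametrization}) to pass to an honest geodesic motion planner with the same number of domains and the same continuity. The exceptional cases $n=1,3,7$ are handled the same way, using the parallelizability-driven motion planners with $n+1$ sets, which are likewise built from rotations in $2$-planes and hence geodesic after reparametrization.

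The third step is bookkeeping on the domains: the sets $E_i$ in the FTY motion planner are differences of semi-algebraic (in particular locally closed, hence locally compact) subsets of the smooth manifold $\R P^n\times\R P^n$, so they already satisfy the locally-compact-decomposition requirement in the definition of $\GC$; no modification is needed there. Combining, we get $\GC(\R P^n)\le\TC(\R P^n)$, and together with the trivial reverse inequality this gives equality.

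The main obstacle is purely the second step: verifying that the FTY planner, which was engineered for homotopy-theoretic reasons rather than metric ones, can be taken to move along minimal geodesics. This amounts to checking that on each stratum the chosen path is the rotation through the small angle in the $2$-plane $\mathrm{span}(\ell_1,\ell_2)$ — on non-orthogonal pairs this is forced and unique, while on the orthogonal locus one must confirm that the immersion-defined direction still produces a length-$\tfrac{\pi}{2}$ great-semicircle (a geodesic, though not a unique one) rather than some longer path, and that the gluing across strata remains continuous once everything is reparametrized to constant speed via Theorem \ref{thm:reparametrization}. All of this is a matter of inspecting \cite[Theorem 7.3]{FTY} against the round metric; once that inspection is done, the proof is complete.
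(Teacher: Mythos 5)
Your proposal is correct and follows essentially the same route as the paper: the inequality $\TC(\R P^n)\le\GC(\R P^n)$ is immediate from the definitions, and the upper bound is obtained by modifying the Farber--Tabachnikov--Yuzvinsky motion planner of \cite[Theorem 7.3]{FTY} so that each rule moves along minimizing geodesics (rotations in the $2$-plane spanned by the two lines), with the immersion breaking the symmetry on orthogonal pairs, exactly as the paper indicates. The paper's proof is no more detailed than yours on the key verification, so no further comparison is needed.
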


The remainder of this section is devoted to computing the geodesic complexity of two Riemannian manifolds, the torus $T$ and the Klein bottle $K$, both equipped with the \textit{flat metric} (in Section \ref{sec:embedded-torus} we will see that the standard embedded torus has a different GC). We will show the lower bounds using the ideas mentioned in Section \ref{sec:cut-locus}, rather than using $\TC(X)\le\GC(X)$. In sections \ref{sec:embedded-torus} and \ref{sec:flat-spheres} we will see that the lower bound coming from this technique can be strictly better than the lower bound $\TC(X)\le\GC(X)$.


\begin{theorem}\label{thm:torus}
Let $T^n_{flat}$ be the $n$-torus equipped with the flat metric. Then \[\GC(T^n_{flat})=\TC(T^n)=n.\]
\end{theorem}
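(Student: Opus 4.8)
The plan is to establish the two inequalities $\GC(T^n_{flat}) \le n$ and $\GC(T^n_{flat}) \ge n$ separately; since $\TC(T^n) = n$ by Theorem \ref{thm:sphere-torus}, and $\TC(X) \le \GC(X)$ always holds, the lower bound would already follow from general principles, but as announced in the section introduction I would instead prove it directly using the level-wise stratified covering structure of $\pi$, since this illustrates the method to be reused in later sections.

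For the upper bound I would first work out the total cut locus of $T^n_{flat}$. Writing $T^n_{flat} = (\R/\Z)^n$ with the Euclidean metric, the geodesics from $x$ to $y$ correspond to lifts $\tilde y - \tilde x + v$ for $v \in \Z^n$ that minimize $\|\cdot\|_2$; nonuniqueness occurs precisely when some coordinate difference $y_k - x_k$ equals $1/2$ in $\R/\Z$. Thus the total cut locus $C$ is the union over $k$ of the ``half-shift'' hypersurfaces, and it is naturally stratified by how many coordinates are exactly half-shifted: stratum $S_j$ (for $j = 0, \ldots, n$) consists of pairs $(x,y)$ with exactly $j$ coordinates satisfying $y_k - x_k = 1/2$, so there are $n+1$ strata. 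On each $S_j$ the fiber of $\pi$ has $2^j$ points, and I would exhibit an explicit geodesic section over each $S_j$ — e.g. on $S_j$ pick, in each of the $j$ ambiguous coordinates, the clockwise half-circle; this choice is continuous on all of $S_j$ and each $S_j$ is locally compact (it is a locally closed subset of the manifold $T^n \times T^n$, being the intersection of a closed set with an open set). This gives $n+1$ domains of continuity, hence $\GC(T^n_{flat}) \le n$.

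For the lower bound I would verify that $\pi\colon GT^n_{flat} \to T^n \times T^n$ is a level-wise stratified covering with the stratification $(S_j)_j$ above (reordered so that higher strata sit in closures of lower ones — note $\overline{S_j}$ consists of pairs with \emph{at least} $j$ half-shifted coordinates, so the frontier condition holds), and then apply Corollary \ref{cor:general-lower-bound}. Concretely, around a point $(x,y) \in S_n$ (all coordinates half-shifted) a small ball $U$ meets all $n+1$ strata; the poset $\mathcal{P}_U$ has $n+1$ levels, with level $j$ indexing which $j$-subset of coordinates is ambiguous. The sheet sets are $\Sigma_{j,\alpha} \cong \{+,-\}^{\text{(the } j \text{ ambiguous coords)}}$, and the transition maps $\phi$ corresponding to a covering relation ``add one more ambiguous coordinate $k$'' are the inclusions that are forced to be the identity on the already-ambiguous coordinates but can be either $+$ or $-$ on the new coordinate $k$ — so $\phi$ embeds a $2^j$-element set into a $2^{j+1}$-element set as one of the two ``slices.'' The point of inconsistency is that at a stratum in level $j+1$ obtained by adding coordinate $k$, the incoming covering relations come from the $j+1$ different strata in level $j$ each missing one coordinate; the images of their transition maps are pairwise-intersection-empty in the relevant factor, so $\bigcap_\alpha \mathrm{im}(\phi^{j+1,\beta}_{j,\alpha}) = \emptyset$, i.e. the poset is inconsistent at every element above the bottom level. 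With $N = n+1$ levels, Corollary \ref{cor:general-lower-bound} gives $\GC(T^n_{flat}) \ge n$, and combined with the upper bound and $\TC(T^n)=n$ we conclude $\GC(T^n_{flat}) = \TC(T^n) = n$.

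I expect the main obstacle to be the careful bookkeeping in verifying Conditions I--III — in particular pinning down the local triviality of $\pi|_{S_j}$ near points of \emph{lower} strata and checking the cocycle identity $\phi^{k,\gamma}_{j,\beta}\circ\phi^{i,\alpha}_{j,\beta} = \phi^{i,\alpha}_{k,\gamma}$ for the coordinate-subset inclusions, and correctly identifying the levels so that Condition III holds. The geometric content (computing $C$, the fiber sizes, and the accumulation behavior of the half-circle geodesics) is straightforward because the flat metric decomposes as a product over coordinates; the work is organizing this product structure into the formal language of the definition.
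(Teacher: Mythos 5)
Your outline is essentially sound, and the geometric content (the stratification by the number of antipodal coordinates, the $2^j$-sheeted coverings, the clockwise section on each stratum, local closedness of the strata) matches the paper's. Where you genuinely diverge is in the lower bound: the paper deliberately does \emph{not} invoke Corollary \ref{cor:general-lower-bound}, but instead gives a self-contained argument (first for $n=2$, then by induction on the dimension of the walls of the chamber decomposition of a small ball around $y$ for fixed $x$), using diagonal sequence arguments to show that every point of a $k$-dimensional wall lies in the closure of $n-k+1$ of the sets $E_i$. You instead propose to verify Conditions I--III and the inconsistency hypothesis and then quote the corollary. That route is legitimate (the paper itself remarks that $\pi$ is a level-wise stratified covering here), and it buys brevity and reusability; the paper's direct argument buys concreteness and avoids the full verification of the axioms, which is exactly the part you flag as delicate. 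For the upper bound, your explicit clockwise choice is the Cohen--Pruidze motion planner that the paper cites, so that step is the same.

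There is, however, a concrete error in your verification of inconsistency, and it sits in the step your whole lower bound rests on. You say the incoming covering relations at a level-$(j{+}1)$ stratum come from the $j+1$ strata in level $j$ ``each missing one coordinate,'' and that the images of their transition maps are \emph{pairwise} disjoint. Neither claim is right as stated. Within a neighborhood $U$ of a point of the top stratum, a path component of level $j$ is labelled not only by which $j$ coordinates are exactly antipodal but also by a sign $\epsilon_k\in\{+,-\}$ for each non-antipodal coordinate (which side of the cut hypersurface it lies on); so there are $2(j+1)$ incoming components, not $j+1$. Moreover, for two \emph{different} missing coordinates $k\neq k'$ the images $\{\sigma:\sigma_k=\epsilon_k\}$ and $\{\sigma:\sigma_{k'}=\epsilon_{k'}\}$ in $\{+,-\}^{T'}$ are not disjoint; if you take only one side per missing coordinate, the intersection of all the images is a singleton, not empty, and the inconsistency hypothesis of Theorem \ref{thm:general-lower-bound} would fail. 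The correct source of inconsistency is the pair of components on the two sides of the \emph{same} wall: for a fixed newly antipodal coordinate $k$, the components with $\epsilon_k=+$ and $\epsilon_k=-$ both precede the given stratum, and their transition maps have images $\{\sigma:\sigma_k=+\}$ and $\{\sigma:\sigma_k=-\}$, which are disjoint, so the total intersection is empty. This is exactly the two-sided approach the paper exploits directly (approaching an edge, wall, or vertex from chambers forcing incompatible geodesic choices). With that correction your application of Corollary \ref{cor:general-lower-bound} goes through.
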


\begin{proof}

\textbf{Stratification and upper bound}


The $n$-torus $T^n_{flat}=(S^1)^n$ has a stratification $(S_k)_{1\le k \le n+1}$ with:

\[S_k=\{(x,y)\in T^n_{flat}\times T^n_{flat}\;|\; y_i=x_i+\pi \; \text{for precisely $k-1$ many $i$}\}\]

The sets $S_k$ were used in Cohen and Pruidze \cite{CP} as domains of continuity for an explicit motion planner in the torus, which is actually a geodesic motion planner. Note that the geodesics on $T^n_{flat}$ with the flat metric are the paths in which all coordinates move simultaneously at constant speed along the shortest arc in the corresponding $S^1$ factor. If $(x,y)$ is in $S_k$, there are precisely $k-1$ coordinates which are antipodal. For each of those $k-1$ coordinates we need to choose to move either clockwise or counterclockwise in that coordinate, which results in $2^{k-1}$ many geodesics between $x$ and $y$. If we vary $(x,y)$ within $S_k$ the geodesics vary continuously and if we approach $S_m$ from $S_k$ the geodesics over $S_k$ converge to geodesics over $S_m$. It is easy to see that $\pi\colon GT_{flat}\to T_{flat}\times T_{flat}$ is a level-wise stratified covering in the sense of Section \ref{sec:cut-locus}.

Cohen and Pruidze show in \cite[Proposition 3.3]{CP} that it is possible to make continuous choices of geodesic over each $S_k$, yielding a motion planner on $n+1$ sets. This implies the upper bound $\GC(T^n_{flat})\le n$.

\vspace{.2cm}
\textbf{Lower bound for $n$=2}
\vspace{.1cm}

The lower bound immediately follows from $\GC(T_{flat})\ge\TC(T)=2$. However, we will prove that $\GC(T_{flat})\ge2$ directly by studying the stratification given above.


We will first give a proof for $n=2$ which can be extended to all $n$ in a straightforward manner. Let $T_{flat}=T^2_{flat}$.

%
%
%

Assume, for the sake of contradiction, that we have a decomposition $T_{flat} \times T_{flat} = E_0 \cup E_1$ with local sections $s_0$ and $s_1$ of $\pi\colon GT_{flat}\to T_{flat}\times T_{flat}$ over $E_0$ and $E_1$ respectively.

Let $(x,y)$ be in $S_3$. We may assume that $(x,y)$ is in $E_0$. Let $U_\epsilon$ be a small ball around $y$. For the remainder of the proof, the point $x$ will be fixed. Let $\tilde E_i = \{ y'\in T_{flat} \;|\; (x,y') \in E_i \}$.

%


Note that because $(x,y)\in S_3$ we have $y_1=x_1+\pi$ and $y_2=x_2+\pi$. We may identify $T_{flat}$ with the quotient of a square centered around $x$, in which the corners represent $y$: $T_{flat}=[0,2\pi]^2 / \sim$ with $(0,x_2)\sim(2\pi,x_2)$ and $(x_1,0)\sim(x_1,2\pi)$. The four distinct geodesics from $x$ to $y$ (the sheets of the level-wise stratified covering $GT_{flat}\to T_{flat}\times T_{flat}$ over $(x,y)$) can then be characterized by the directions up-right, up-left, down-right and down-left (UR, UL, DR, DL); see Figure \ref{fig:chambers}. By abuse of notation, we will call geodesics which are very close to one of those four geodesics by the same name. For example, all geodesics very close to UR will also be denoted UR.

The cut locus $C_x$ of $x$ divides the ball $U_\epsilon$ into four open chambers with their boundaries intersecting at $y$, see Figure \ref{fig:chambers}. The chamber decomposition has a cell structure compatible with the stratification: $y$ is the vertex and $(x,y)$ is in $S_3$, if $y'$ is in an edge then $(x,y')$ is in $S_2$ and if $y''$ is in the interior of a chamber then $(x,y'')$ is in $S_1$.

For points $y''$ in the interior of a chamber of $U_\epsilon$, the unique geodesic between $x$ and $y''$ is one of UR, UL, DR, DL, depending on the chamber containing $y''$. In this way we can identify the chambers with the geodesics and label them UR, UL, DR, DL just as in Figure \ref{fig:chambers}.

\begin{figure}[htb!]
\begin{minipage}{.48\linewidth}
  \begin{center}
    \includegraphics[scale=1]{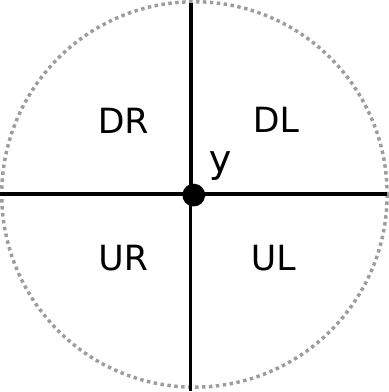}
  \end{center}
\end{minipage}%
\begin{minipage}{.48\linewidth}
  \begin{center}
    \includegraphics[scale=2]{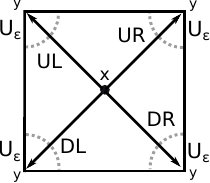}
  \end{center}
\end{minipage}
\caption{On the left is the small ball $U_\epsilon$ divided into chambers. On the right is the torus $T_{flat}^2$, seen as the square with opposite sides identified with each other. In particular, all the corner points represent the same point $y$ in $T_{flat}^2$.}
\label{fig:chambers}
\end{figure}


Essentially, the proof is rooted in the following observation: The pair $(x,y)\in S_3$ cannot be in the interior of $E_0$ because then $s_0$ could not be continuous at that point. To see why, suppose we choose $s_0((x,y))=\text{UR}$. Now note that there are pairs $(x,y'')$ arbitrarily close to $(x,y)$ such that a $\pi^{-1}((x,y''))=\text{UL}$. If $(x,y)$ were in the interior of $E_0$, then we could find such $(x,y'')$ in $E_0$ which are arbitrarily close to $(x,y)$. This would contradict continuity because $s_0((x,y''))$ has to be $\text{UL}$ and so it does not converge to $s_0((x,y))=\text{UR}$ as $y''$ approaches $y$.

Furthermore, to get a contradiction to continuity as in the previous paragraph we did not need to assume that $(x,y)$ is in the interior of $E_0$. To get the same contradiction, it suffices to assume that $E_0$ contains (for instance) pairs $(x,y_U)$ and pairs $(x,y_D)$ arbitrarily close to $(x,y)$ such that $\pi^{-1}((x,y_U))\in\{\text{UR},\text{UL}\}$ and that $\pi^{-1}((x,y_D))\in\{\text{DR},\text{DL}\}$. In order for $s_0$ to be continuous, we would have to choose $s_0((x,y))$ to be in the intersection $\{\text{UR},\text{UL}\}\cap\{\text{DR},\text{DL}\}=\emptyset$, which is impossible.


There is still one ingredient missing, because the contradiction pointed out in the previous paragraph does not show that we need more than two domains of continuity for a geodesic motion planner: If $E_0$ contained $(x,y)$ as an isolated point and $E_1$ contained the a neighborhood of $(x,y)$ (minus the point $(x,y)$ itself) the simple argument above would not yield a contradiction to continuity.

The key idea is that we can apply the contradiction argument recursively. The same argument as above can be used to show that no $(x,y')\in S_2$ can ever be in the interior of $E_0$ (or $E_1$). Because $T_{flat}\times T_{flat} = E_0\cup E_1$, this implies that all $(x,y')\in S_2$ are in the closure of both $E_0$ and $E_1$. By using the fact that $(x,y)$ can be approached by pairs $(x,y')\in S_2$ from all four edges, we can also approach $(x,y)$ by pairs contained in $E_0$ from different chambers (by a diagonal argument). As we pointed out above, that makes it impossible for $s_0((x,y))$ to be continuous.

Now we will follow the strategy outlined above to show the lower bound for $T_{flat}^2$ in greater detail. Afterwards we will extend it to all $T_{flat}^n$ using a proof by induction, where the induction step is essentially the same as in the argument for $T_{flat}^2$.

Let $y'$ be on the edge between chambers UR and DR. Assume, for the sake of contradiction, that $y'$ is in the interior of $\tilde E_0$. Then we would be able to find two sequences $(y_I''^k)$ and $(y_{II}''^k)$ in $\tilde E_0$ converging to $y'$ with $y_{I}''^k$ in UR and $y_{II}''^k$ in DR. By assumption, we have a local section $s_0\colon E_0\to GT_{flat}$.  By continuity $s_0((x,y_{I}''^k))=UR$ and $s_0((x,y_{II}''^k))=DR$ need to converge to the same path $s_0((x,y'))$, which yields a contradiction. Therefore, no $y'$ can lie in the interior of either $\tilde E_0$ or $\tilde E_1$. In other words, every $y'$ lying on an edge must be in the boundary of the closure of both $\tilde E_0$ and $\tilde E_1$.

Recall that $(x,y)$ is in $E_0$. We showed that every point on an edge between chambers lies in the closure of $\tilde E_0$ (and $\tilde E_1$). Using this and a diagonal argument we can construct a sequence $(y'^k)$ in $\tilde E_0$ converging to $y$, such that $(y'^k)$ is contained in a small neighborhood around the edge between the chambers UR and DR, for instance. Note that, by construction, $s_0((x,y'^k))$ must then converge to either the geodesic UR or the geodesic DR. By continuity, $s_0((x,y))$ must be either UR or DR.

However, if we assume instead that $(y'^k)$ is contained in a small neighborhood around the edge between the chambers UL and DL, the same argument would imply that $s_0((x,y))$ must be either UL or DL. This yields a contradiction, because $\{\text{UR},\text{DR}\}\cap\{\text{UL},\text{DL}\}=\emptyset$.

\vspace{.2cm}
\textbf{Lower bound for $n\ge3$}
\vspace{.1cm}

The lower bound for $n\ge3$ follows by iterating the argument for the case $n=2$.

Consider $T^3_{flat}=(S^1)^3$, for instance. Let $(x,y)\in S_4\subset T^3_{flat}\times T^3_{flat}$ and let $U_\epsilon$ be a small ball around $y$. This ball will have a chamber decomposition, just as for $n=2$, except that the square in Figure \ref{fig:chambers} has to be replaced by a cube. When $n=3$ there are eight 3-dimensional open chambers, separated by 2-dimensional walls, which intersect along edges, which in turn all intersect at the vertex at the center of the ball $U_\epsilon$.

Analogously to the case with $n=2$, we may show that every $y''$ in a 2-dimensional wall is in the closure of at least two $\tilde E_i$.

Now assume, for the sake of contradiction, that there exists a point $y'$ in an edge which has a neighborhood $V_\epsilon$ which is contained in $\tilde E_0 \cup \tilde E_1$. The point $y'$ can be approached by a sequence $(y''^k)$ contained in $V_\epsilon \cap W$, where $W$ is a wall adjacent to $y'$. Because each $y''^k$ is on a wall, it is in the closure of at least two $\tilde E_i$. Since each $y''^k$ is in $V_\epsilon\subset\tilde E_0 \cup \tilde E_1$, it must be in the closure of $\tilde E_0$ (and $\tilde E_1$). Using a diagonal argument we may construct a sequence $(\tilde y''^k)$ converging to $y'$ contained either in $W$ or a chamber adjacent to $W$, and such that $(\tilde y''^k)$ is contained in $\tilde E_0$. By continuity, $s_0((x,\tilde y''^k))$ must tend to $s_0((x,y'))$ as $k$ tends to infinity, meaning that $s_0((x,y'))$ is compatible with one of the two chambers adjacent to $W$. However, we could do the same argument with any wall adjacent to $y'$ and deduce that $s_0((x,y'))$ is compatible with a chamber which is adjacent that wall too. This yields a contradiction because there is no chamber adjacent to all four walls which are adjacent to the same edge. Therefore, every point $y'$ in an edge is in the closure of at least three $\tilde E_i$.

Next we assume, for the sake of contradiction, that the small ball $U_\epsilon$ around $y$ is contained in $\tilde E_0 \cup \tilde E_1 \cup \tilde E_2$. The point $y$ can be approached by a sequence $(y'^k)$ contained in $\tilde E_0 \cup \tilde E_1 \cup \tilde E_2$ and on an edge $L$. Because each $y'^k$ is in the closure of at least three $\tilde E_i$, it is in the closure of $\tilde E_0$ (and $\tilde E_1$ and $\tilde E_2$). Using a diagonal argument we may construct a sequence $(\tilde y'^k)$ converging to $y$ contained in $L$ or a chamber adjacent to $L$ and such that $(\tilde y'^k)$ is contained in $\tilde E_0$. By continuity, $s_0((x,\tilde y'^k))$ must tend to $s_0((x,y))$ as $k$ tends to infinity, meaning that $s_0((x,y))$ is compatible with one of the chambers adjacent to $L$. However, we could do the same argument with any edge and deduce that $s_0((x,y'))$ is compatible with a chamber adjacent to every edge. This yields a contradiction because there is no chamber adjacent to every edge. Therefore, the point $y$ is in the closure of at least four $\tilde E_i$. This implies that $\GC(T^3_{flat})\ge3$.

In the case of general $n$ the ball $U_\epsilon$ around $y$ is divided into $2^n$ many $n$-dimensional chambers separated by $(n-1)$-dimensional walls going through the center of the ball and meeting along $(n-2)$-dimensional walls and so on. The $k$-dimensional walls correspond to the $k$-skeleton of the $n$-dimensional cube intersected with small balls around the corner points. By induction on the argument above we can show that every point of a $k$-dimensional wall needs to be in the closure of $n-k+1$ many $\tilde E_i$, which implies that $\GC(T^n_{flat})\ge n$ (seeing the vertex as a 0-dimensional wall).
\end{proof}

In the following theorem we compute the geodesic complexity of the Klein bottle. While the lower bound in the theorem follows from $\GC(K)\ge\TC(K)$ and from $\TC(K)=4$, as shown by Cohen and Vandembroucq using rather complicated algebraic calculations, we show the lower bound directly without using $\TC(K)$.

The topological complexity of the higher-dimensional Klein bottles $K_n$ (introduced in \cite{Dav}) is unknown, but an argument similar to the one given in the proof of the theorem, albeit with much more complicated cut loci, yields the geodesic complexity of $K_n$ for all $n$; this is done by Davis and the author in \cite{DRM}.

\begin{theorem}\label{thm:klein}
Let $K_{flat}$ be the Klein bottle equipped with the flat metric. Then $\GC(K_{flat})=4$.
\end{theorem}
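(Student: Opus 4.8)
The plan is to mirror the proof of Theorem~\ref{thm:torus}: an explicit geodesic motion planner gives the upper bound $\GC(K_{flat})\le 4$, and the structure of the total cut locus gives the matching lower bound, in the spirit of Theorem~\ref{thm:general-lower-bound}.

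I would work with the model $K_{flat}=\mathbb{R}^2/\Gamma$, where $\Gamma$ is generated by $(x,y)\mapsto(x+1,y)$ and the glide reflection $(x,y)\mapsto(-x,y+1)$, carrying the flat metric induced from $\mathbb{R}^2$; geodesics are then images of straight line segments, and the number of geodesics from $x$ to $y$ equals the number of lifts of $y$ realising the distance to a fixed lift of $x$, which is read off from the Dirichlet domain of that lift with respect to the $\Gamma$-orbit. One finds that this Dirichlet domain is a hexagon for a generic point — so the cut locus $C_x$ of a generic $x$ is a theta-graph with two trivalent vertices — and degenerates to a rectangle exactly when $x$ lies on one of two distinguished circles (the images in $K_{flat}$ of the glide-reflection axes), in which case $C_x$ is a wedge of two circles with one $4$-valent vertex; no pair of points carries more than four geodesics. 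This yields a stratification of $K_{flat}\times K_{flat}$ by the combinatorial type of the set of shortest geodesics, a finite chain running from the open dense locus $S_1$ of uniquely joined pairs down to the one-dimensional locus $S_4$ of pairs $(x,V_x)$ with $x$ on an axis and $V_x$ the $4$-valent vertex of $C_x$.

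For the upper bound I would check that the restriction of $\pi$ to each stratum is a trivial finite-sheeted covering, so each (locally compact, semialgebraic) stratum admits a continuous choice of shortest geodesic; assembling these over the finitely many strata produces the motion planner and shows $\GC(K_{flat})\le 4$, and I would then write this planner out explicitly, as promised in the introduction. For the lower bound, the key geometric point — the one that really distinguishes $K_{flat}$ from the torus — is that the $4$-sheeted covering $\pi$ over the circle $S_4$ has non-trivial monodromy: travelling once around the axis, the glide reflection interchanges the two ``left-going'' and the two ``right-going'' geodesics, so this covering splits into two connected double covers. Hence the deepest stratum cannot be swept out by a single section even though the naive stratification has only four levels; unwinding the twist forces an extra level. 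Concretely, I would take $U$ to be a tubular neighbourhood of the circle $S_4$, describe the induced stratification of $U$, verify Conditions I and II for $\pi|_U$ (in particular the cocycle identity for the gluing maps $\phi$), and check that the resulting poset — now with five levels, because the monodromy identifies strata that were separate in a mere ball — is inconsistent at every non-bottom level: approaching one stratum from an adjacent one forces only the sheets that survive in the limit, and the relevant subsets of sheets have empty intersection. The induction of Theorem~\ref{thm:general-lower-bound} — or a direct rerun of the nested-sequence argument of Theorem~\ref{thm:torus}, now arranged to wind around the circle — then shows every point of the bottom stratum lies in the closure of at least five of the $E_i$, so $\GC(K_{flat})\ge 4$.

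The main obstacle is exactly this interplay between the local cut-locus combinatorics near a $4$-valent pair and the global monodromy around the axis circle. One has to track precisely which sheet of $\pi$ each shortest geodesic accumulates to, both as $y$ moves between the chambers cut out by $C_x$ and as $x$ winds around the axis, and then assemble this bookkeeping into a genuine five-level inconsistent level-wise stratified covering over $U$; checking the gluing maps and their cocycle relations in the presence of the twist is where most of the work lies. The explicit optimal motion planner for the upper bound, while routine in outline, likewise demands care in trivialising the coverings over the individual strata.
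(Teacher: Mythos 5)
Your stratification, the description of the cut loci (theta graph generically, wedge of circles over the two special circles), and the identification of the non-trivial monodromy of $\pi$ over the deepest stratum are all correct and match the paper. But both halves of your argument have genuine gaps. For the upper bound, your claim that ``the restriction of $\pi$ to each stratum is a trivial finite-sheeted covering'' is false, and it contradicts the very monodromy you invoke two sentences later: over $S_2$, $S_3$ and $S_4$ there is no consistent global choice of ``up/down'' as $x$ travels around the orientation-reversing direction, so these restricted coverings are non-trivial. (Note also that with your four strata, a section over each would give $\GC\le 3$, contradicting your own lower bound.) The actual construction must cut the strata — in the paper, by whether $x$ lies in the annulus $A=\{x_1\ne 0\}$ or its complementary circle — and regroup the pieces into five locally compact domains ($S_1$, $S_2^A$, $S_2^c\sqcup S_3^A$, $S_3^c\sqcup S_4^A$, $S_4^c$), with an explicit section over each path component; this cutting is exactly why four strata force five domains.

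For the lower bound, the step that gets you from $\ge 3$ to $\ge 4$ is not justified. In the framework of Theorem \ref{thm:general-lower-bound} the levels of the poset are the strata, of which there are four, so that theorem yields only $\GC\ge 3$; the monodromy around the circle $S_4$ does not create a fifth level, and the inconsistency condition (empty intersection of images of the sheet maps) cannot see it — near the top stratum any such maps are bijections of four-element sheet sets, so their images always intersect. A ``nested sequence winding around the circle'' also cannot alone produce the contradiction, because closure-type arguments only constrain $\overline{E_i}$, and a section of a non-trivial covering does exist over any proper subset of a circle component of $S_4$; nothing so far prevents the four sets from each meeting $S_4$ in a proper dense piece. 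The missing ingredient is the hypothesis that the $E_i$ are locally compact, hence locally closed (open in their closures): once the chamber-merging argument (as in Theorem \ref{thm:torus}, run here with the four 3-chamber types degenerating to the 4-chamber type) shows every point of $S_4$ lies in the closure of all four sets, local closedness makes the intersections $E_i\cap S_4$ relatively open and disjoint, so an entire circle component of $S_4$ lies in a single $E_i$ — and over that whole component no continuous choice of geodesic exists, by the monodromy. That is how the paper converts the non-orientability into the extra $+1$, and without it (or an equivalent substitute) your argument only proves $\GC(K_{flat})\ge 3$.
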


\begin{proof}
\textbf{Stratification}

The total cut locus of the Klein bottle is somewhat similar to the total cut locus of the torus but more complex. Before going through the following proof it is helpful to first read the proof of the previous theorem.

Let $K_{flat}= [0,1]^2 / \sim$ with $(0,x_2)\sim(1,1-x_2)$ and $(x_1,0)\sim(x_1,1)$.

The total cut locus of the torus is completely homogeneous, in the following sense. If we fix a point $x$ in the torus, then the cut  locus at $x$ is homeomorphic to $S^1 \vee S^1$. As we translate $x$ in the torus, the cut locus is simply translated along with $x$.

In the case of the Klein bottle, the cut locus of a point changes as we move the point around $K_{flat}$. To understand the total cut locus of $K_{flat}$ we will make use of the universal covering $p\colon \R^2 \to K_{flat}$. Given a point $x$ in $K_{flat}$, we can determine its cut locus $C_x$ in the following way.

Consider the set of all lifts $p^{-1}(x)$ in $\R^2$ under the universal covering. Now choose one point in $p^{-1}(x)$ and draw segments between that point and all the other lifts in $p^{-1}(x)$. Then the cut locus $C_x$ is the projection of the convex hull of the bisecting lines of all those segments. In Figure \ref{fig:torus} we see that in the case $x_2=0$ or $x_2=1/2$ the convex hull is a square which projects to $S^1\vee S^1 \subset K_{flat}$, the cut locus of $x$, just as in the case of the torus. However, when $x_2\ne0,1/2,1$ the convex hull is a (non-regular) hexagon, which projects down to a $\theta$ graph with edges of different lengths; see Figure \ref{fig:klein1}.

After the preliminaries above we are ready to describe how $C_x$ changes as we move $x$.

If we start moving a point $x$ with $x_2=0$ vertically, the cut locus $C_x$ continuously deforms from a wedge $S^1\vee S^1$ into a $\theta$ graph. The smallest edge gradually gets longer while one of the other edges of the graph gets shorter. Once $x$ reaches the other orientation-reversing ``meridian'' (for example we go up from $x_2=0$ to $x_2=1/2$), the new edge turns into a circle of the cut locus $S^1\vee S^1$, while the edge that was getting shorter has been contracted into the basepoint. 

On the other hand, if we move $x$ horizontally the cut locus is merely translated along.



\begin{figure}[htb!]
\begin{minipage}{.48\linewidth}
  \begin{center}
    \includegraphics[scale=0.8]{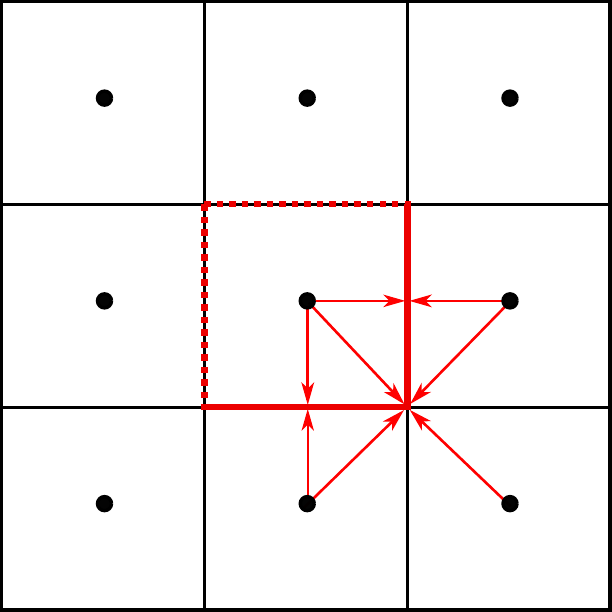}
  \end{center}
\end{minipage}%
\begin{minipage}{.48\linewidth}
  \begin{center}
    \includegraphics[scale=1 ]{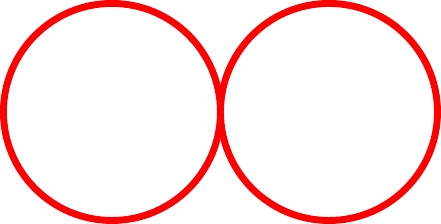}
  \end{center}
\end{minipage}
\caption{The cut locus of the point $(1/2,1/2)$ in the Klein bottle is a wedge of circles. The picture for the torus is exactly the same in this case.}
\label{fig:torus}
\end{figure}

\begin{figure}[htb!]
\begin{minipage}{.48\linewidth}
  \begin{center}
    \includegraphics[scale=0.8]{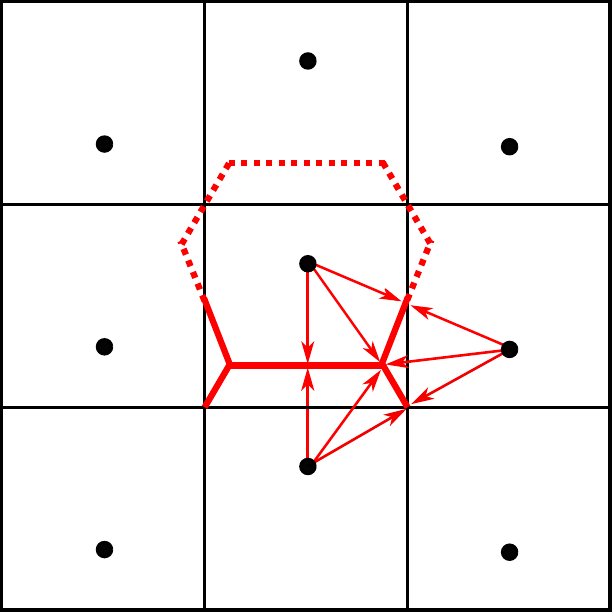}
  \end{center}
\end{minipage}%
\begin{minipage}{.48\linewidth}
  \begin{center}
    \includegraphics[scale=0.8 ]{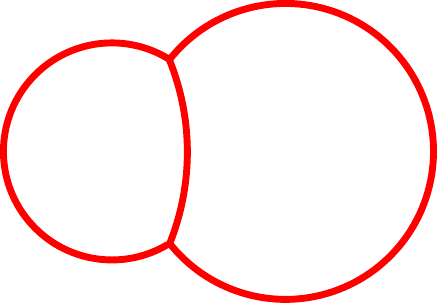}
  \end{center}
\end{minipage}
\caption{The cut locus of points between $(1/2,1/2)$ and $(1/2,1)$ in the Klein bottle. When the point moves up from $(1/2,1/2)$ a new edge appears at the vertex and then it keeps growing, while another edge gets shorter.}
\label{fig:klein1}
\end{figure}

\begin{figure}[htb!]
\begin{minipage}{.48\linewidth}
  \begin{center}
    \includegraphics[scale=0.8]{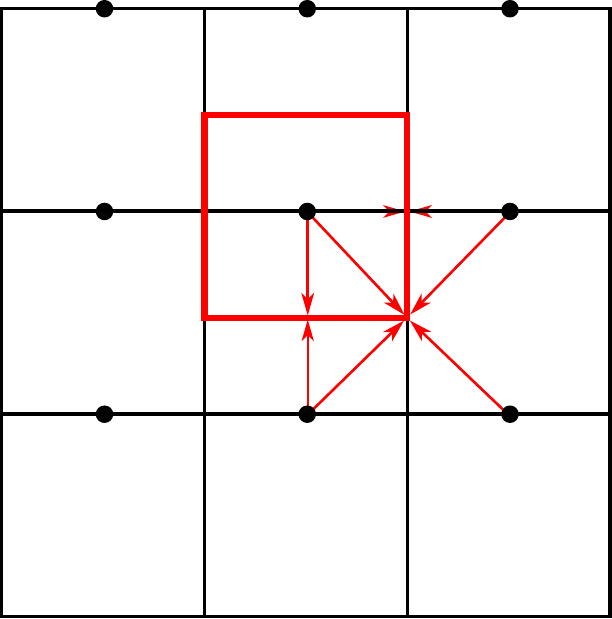}
  \end{center}
\end{minipage}%
\begin{minipage}{.48\linewidth}
  \begin{center}
    \includegraphics[scale=1 ]{wedge}
  \end{center}
\end{minipage}
\caption{The cut locus of the point $(0,1)$ in the Klein bottle is a wedge of circles again. The new edge that appeared when moving up from $(1/2,1/2)$ has completely replaced the old vertical edge.}
\label{fig:klein2}
\end{figure}

We construct the following stratification, which makes $\pi\colon GK_{flat}\to K_{flat}\times K_{flat}$ into a level-wise stratified covering.

\begin{enumerate}
\item The set $S_1$ is the complement of the total cut locus. That is, the pairs $(x,y)$ in $S_1$ are covered by a unique geodesics.
\item The set $S_2$ consists of the pairs $(x,y)$ covered by precisely two geodesics. This is the case precisely when $y$ lies in the interior of one of the edges of $C_x$, which is either $S^1\vee S^1$ or a $\theta$ graph.
\item The set $S_3$ consists of the pairs $(x,y)$ covered by precisely three geodesics. This is the case precisely when $x_2\ne0,1/2,1$ and $y$ lies on one of the vertices of the $\theta$ graph $C_x$.
\item The set $S_4$ consists of the pairs $(x,y)$ covered by precisely four geodesics. This is the case precisely when $x_2=0$, $x_2=1/2$ or $x_2=1$ and $y$ is the vertex of $S^1\vee S^1=C_x$.
\end{enumerate}

\vspace{.2cm}
\textbf{Upper bound}
\vspace{.1cm}

For the upper bound we will construct a geodesic motion planner with 5 sets. That is, a decomposition $K_{flat}\times K_{flat} = \bigcup_{i=0}^4E_i$ such that there is a local section of $\pi\colon GK_{flat}\to K_{flat}\times K_{flat}$ over each $E_i$.

Note that there is no local section of $GK_{flat}\to K_{flat}\times K_{flat}$ on $S_i$ for $i=2,3,4$ because we cannot make a consistent choice of a geodesic going ``up'' or ``down'' on the Klein bottle. To get sets $E_i$ for which we can define a geodesic motion planner we can ``cut'' the strata so as to make it impossible to go once around the Klein bottle along an orientation-reversing curve. We are going to divide the pairs $(x,y)$ into two sets, depending on whether $x$ lies in the annulus $A= \{ (x_1,x_2) \in K_{flat} \;|\; x_1\ne0 \}$.

We set

\[ S_i^A = \{ (x,y)\in S_i \;|\; x\in A \} \]

and $S_i^c= S_i - S_i^A$.

This yields another stratification of $K_{flat}\times K_{flat}$:

\begin{enumerate}
\item $\tilde E_0 = S_1$
\item $\tilde E_1 = S_2^A$
\item $\tilde E_2 = S_2^c \sqcup S_3^A$
\item $\tilde E_3 = S_3^c \sqcup S_4^A$
\item $\tilde E_4 = S_4^c$
\end{enumerate}

Intuitively, the sets range from more to less generic. The disjoint unions above indicate that the sets are topologically disjoint.



%

We will construct local sections $s_i\colon E_i \to GK_{flat}$ for all $i$ by constructing them separately on every path component of $
E_i$.

Conceptually, the remainder of the proof can be summarized as follows. The path components of the $E_i$ are either contractible or have the homotopy type of the circle. We will show that $GK_{flat} \to K_{flat}\times K_{flat}$ becomes a trivial covering when restricted to each path component, by constructing sections of all those coverings. In the case of contractible path components it is immediate that the restriction is a trivial covering. In the case of path components which are homotopy equivalent to the circle it boils down to showing that the monodromy action in the restriction is trivial (i.e.\ the loop going once around the circle lifts to a loop).

The section $s_0\colon E_0 \to GK_{flat}$ simply maps $(x,y)$ to the unique geodesic between $x$ and $y$.

The set $E_1 = S_2^A$ has three path components. The first path component contains those pairs $(x,y)$ such that $x$ is in the annulus $A$ and $y$ is being represented by a point which is in the interior of the horizontal edges either of the square or of the hexagon in figures \ref{fig:torus} and \ref{fig:klein1}. In this case, there are precisely two geodesics between $x$ and $y$, one going up and one going down, and $s_1((x,y))$ can be chosen to be the geodesic going up. Note that there is a consistent choice of up and down because $x$ has to remain in $A$. There are two further path components in $E_1$, containing those pairs $(x,y)$ such that $x$ is in the annulus $A$ and $y$ is being represented by a point which is in the interior of either the vertical edges of the square in Figure \ref{fig:torus} or in the slanted edges of the hexagon in Figure \ref{fig:klein1}. As the coordinate $y_2$ approaches $0$ (or $1$) or $1/2$ one of the slanted edges of the hexagon becomes a vertical edge of the square and the other one disappears. Both for vertical edges and slanted edges, there are precisely two geodesics between $x$ and $y$, one going to the right and one going to the left, and choosing $s_1((x,y))$ to be the geodesic going to the right yields a continuous map.

For the set $E_2 = S_2^c \sqcup S_3^A$ we are going to define the section $s_2 \colon E_2 \to GK_{flat}$ separately on $S_2^c$ and $S_3^A$. For $S_2^c$ the situation is analogous as for $S_2^A$ above, replacing $A$ by its complement (a circle). The set $S_3^A$ consists of pairs $(x,y)$ such that $x$ lies in $A$ and satisfies $x_2\ne0$ (and $x_2\ne1$) and $x_2\ne1/2$, and $y$ lies on one of the two vertices of the $\theta$-graph $C_x$. In terms of Figure \ref{fig:klein1}, $y$ is being represented by a corner of the hexagon. Note that $x$ is restricted to two disjoint open squares on which the corners of the hexagon remain completely separate. In fact, $S_3^A$ has four path components homeomorphic to $(0,1)^2$, because for each of the two open squares containing $x$, $y$ can be one of the two vertices in the $\theta$-graph $C_x$. For $(x,y)$ on a given path component, there will be precisely three geodesics from $x$ to $y$, either two going to the left and one to the right, or two going to the right and one to the left. Choosing $s_1((x,y))$ to be the geodesic going to the right in the former case and to the left in the latter case (for instance) yields a continuous map.

For the set $E_3 = S_3^c \sqcup S_4^A$ we are going to define the section $s_3 \colon E_3 \to GK_{flat}$ separately on $S_3^c$ and $S_4^A$. For $S_3^c$ the situation is analogous as for $S_3^A$ above, replacing $A$ by its complement (a circle). For $(x,y)$ in $S_4^A$ there are precisely four geodesics from $x$ to $y$. This situation corresponds to $y$ being represented by a point which is on a corner of the cube in Figure \ref{fig:torus}. The set $S_4^A$ has two path components homeomorphic to $(0,1)$, depending on whether $x_2=0$ or $x_2=1/2$. We choose the geodesic going up and to the right (represent $y$ by the upper right corner) for each path component.

For the set $E_4 = S_4^c$ the situation is analogous as for $S_4^A$ above, replacing $A$ by its complement (a circle).

\vspace{.2cm}
\textbf{Lower bound}
\vspace{.1cm}

For the lower bound we use the stratification $S_i$.

We want to show $\GC(K_{flat})\ge4$. Assume that we have a decomposition $K_{flat} \times K_{flat} = \bigcup_{i=0}^3E_i$ such that there exist local sections $s_i\colon E_i \to GK_{flat}$. Choose a point $(x,y)$ in $S_4$ and fix it for the remainder of the proof. We may assume that $(x,y)$ is in $E_0$.

The proof of the lower bound has two parts. First we show that every $(x,y)$ in $S_4$ is in the closure of every single $E_i$. We do this analogously to the proof of the lower bound for $\GC(T^n_{flat})$, by using a proof by contradiction argument recursively. In the second part we will use the fact that the $E_i$ are locally compact sets to show that this implies that every path component of $S_4$ must be contained in one of the sets $E_i$. This will yield a contradiction because no path component of $S_4$ admits a continuous geodesic motion planner.

Assume that there is a small ball $W_\epsilon$ around $(x,y)$ which only intersects $E_0$, $E_1$ and $E_2$ (so it does not intersect $E_3$). We will show that this contradicts the continuity of the local section $s_0$ at $(x,y)$ and thus every $(x,y)$ in $S_4$ has to be in the closure of every single $E_i$.

Choose a small ball $U_\epsilon$ around $y$ such that $\{x\}\times U_\epsilon$ is in $W_\epsilon$. The ball $U_\epsilon$ is divided into four chambers precisely as for the torus in the proof of Theorem \ref{thm:torus} (see Figure \ref{fig:chambers}).

Between $x$ and $y$ there are four geodesics. Making a local choice of orientation in the vertical direction (choice of up and down) the geodesics can be characterized by the directions up-right, up-left, down-right and down-left (UR, UL, DR, DL). By abuse of notation, we will denote geodesics which are very close to one of those four geodesics by the same name. For example geodesics which are very close to UR will also be denoted UR.

For points $y''$ in the interior of a chamber of $U_\epsilon$, the unique geodesic between $x$ and $y''$ is one of UR, UL, DR, DL, depending on the chamber containing $y''$. In this way we can identify the chambers with the geodesics and label them UR, UL, DR, DL just as in Figure \ref{fig:chambers}.

Now let $(x',y')$ be in $S_3\cap W_\epsilon$ and let $V_\epsilon$ be a small ball around $y'$ such that $\{x\}\times V_\epsilon$ is in $W_\epsilon$. Similarly to $U_\epsilon$, the ball $V_\epsilon$ is divided into chambers corresponding to the geodesics from $x'$ to $y'$, but into three chambers in this case; see Figure \ref{fig:chambers-klein}. Labelling the chambers by the geodesics associated to them as for $S_4$ above, we end up with four kinds of chamber decompositions of a small ball $V_\epsilon$ around a point $(x',y') \in S_3\cap W_\epsilon$:

\begin{enumerate}
\item $\{\text{UR,UL,DR}\}$
\item $\{\text{UR,DL,UL}\}$
\item $\{\text{DR,DL,UL}\}$
\item $\{\text{DR,UR,DL}\}$
\end{enumerate}

\begin{figure}[htb!]
\begin{minipage}{.48\linewidth}
  \begin{center}
    \includegraphics[scale=1]{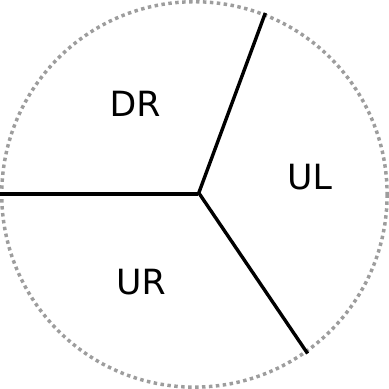}
  \end{center}
\end{minipage}%
\begin{minipage}{.48\linewidth}
  \begin{center}
    \includegraphics[scale=2]{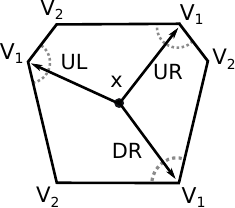}
  \end{center}
\end{minipage}
\caption{The small ball $V\epsilon$ divided into chambers.}
\label{fig:chambers-klein}
\end{figure}

\begin{figure}[htb!]
\begin{minipage}{.48\linewidth}
  \begin{center}
    \includegraphics[scale=1]{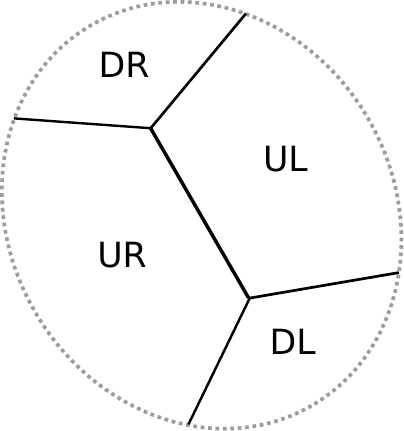}
  \end{center}
\end{minipage}%
\begin{minipage}{.48\linewidth}
  \begin{center}
    \includegraphics[scale=2]{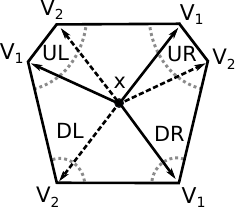}
  \end{center}
\end{minipage}
\caption{As $x'$ approaches $x$ from above the two 3-chamber decompositions around $S_3$ merge into one 4-chamber decomposition around $S_4$, as the hexagon turns into a square.}
\label{fig:chambers-klein2}
\end{figure}

\begin{figure}[htb!]
\begin{minipage}{.48\linewidth}
  \begin{center}
    \includegraphics[scale=1]{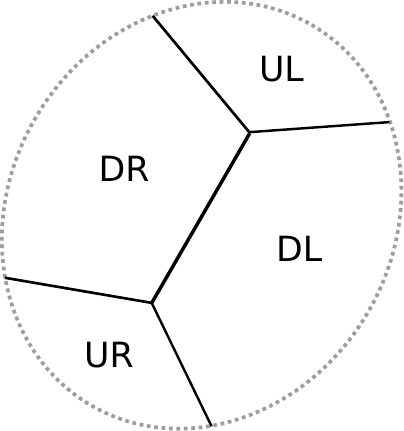}
  \end{center}
\end{minipage}%
\begin{minipage}{.48\linewidth}
  \begin{center}
    \includegraphics[scale=2]{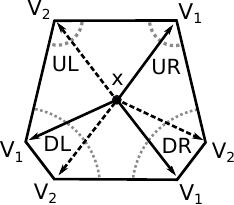}
  \end{center}
\end{minipage}
\caption{As $x'$ approaches $x$ from below the two 3-chamber decompositions around $S_3$ merge into one 4-chamber decomposition around $S_4$, as the hexagon turns into a square.}
\label{fig:chambers-klein3}
\end{figure}

Using the chambers described above and the exact same argument as in the proof of the lower bound for $\GC(T^2_{flat})$ we can show that every point of $S_3$ has to be in the closure of at least three different $E_i$. Otherwise, there would be a neighborhood of a pair $(x',y')$ in $S_3$ having a non-empty intersection only two $E_i$, which would in turn imply that there is no continuous local section at that point (just as in the argument for $T^2_{flat}$).

Because we assumed that $W_{\epsilon}$ only intersects the sets $E_0$, $E_1$ and $E_2$ and every point of $S_3$ has to be in the closure of at least three different $E_i$, all three of those sets need to accumulate at every point of $W_{\epsilon}\cap S_3$. In particular, $E_0$ accumulates at every point of $W_{\epsilon}\cap S_3$.

As was explained at the beginning of this proof and illustrated in the figures \ref{fig:torus}, \ref{fig:klein1} and \ref{fig:klein2}, the stratum $S_3$ accumulates at the stratum $S_4$. In the figures \ref{fig:chambers-klein2} and \ref{fig:chambers-klein3} we see how all four different kinds of 3-chamber decompositions for $S_3$ merge into the 4-chamber decomposition for $S_4$ which is looks like Figure \ref{fig:chambers}.

For instance, there is a sequence of points $((x'^k,y'^k))$ lying in $W_{\epsilon}\cap S_3$ and converging to $(x,y)$ such that $\pi^{-1}((x'^k,y'^k))=\{\text{UR,UL,DR}\}$ for all $k$. Using a diagonal argument we can construct another sequence $((x''^k,y''^k))$ in $E_0$ converging to $(x,y)$ and such that each $(x''^k,y''^k)$ is in a small neighborhood of $(x'^k,y'^k)$, which implies $\pi^{-1}((x''^k,y''^k))\in\{\text{UR,UL,DR}\}$ for all $k$. Recall that we abuse notation by denoting all geodesics which are close to each other by the same name, to simplify the notation. By continuity, $s_0((x''^k,y''^k))$ must converge to $s_0((x,y))$, which means that $s_0((x,y))\in\{\text{UR,UL,DR}\}$.

However, repeating the argument for the other types of chamber decomposition would imply that $s_0((x,y))\in\{\text{UR,UL,DR}\}\cap\{\text{UR,DL,UL}\}\cap\{\text{DR,DL,UL}\}\cap\{\text{DR,UR,DL}\} = \emptyset$. This yields a contradiction to our assumption that there is a small ball $W_\epsilon$ around $(x,y)$ which only intersects $E_0$, $E_1$ and $E_2$, because there is no choice of $s_0((x,y))$ that would make $s_0$ continuous at $(x,y)$. This shows that every point of $S_4$ lies in the closure of at least four different $E_i$.

The above argument is also illustrated slightly differently in Figure \ref{fig:lower-bound}.

\begin{figure}[hbt!]
\centering
\includegraphics[scale=1.5]{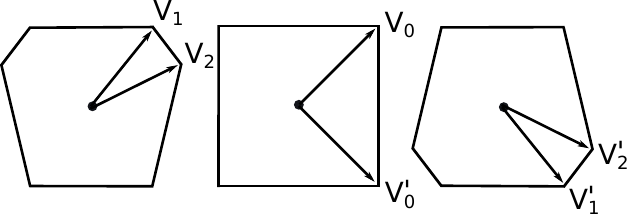}
\caption{As the vertical coordinate $x_2$ approaches $\frac12$, and $V_1$ ($V_1'$) and $V_2$ ($V_2'$) approach the vertex $V_0$ ($V_0'$), the only continuous choice of geodesics requires all geodesics go up (down) when $x_2$ is approaching $\frac12$ from above (below).}
\label{fig:lower-bound}
\end{figure}

In the final part of the proof we use the assumption that the $E_i$ are locally compact sets to improve the bound from $\GC(K_{flat})\ge3$ to $\GC(K_{flat})\ge4$. 

Assume, for the sake of contradiction, that we have a decomposition $X \times X = \bigcup_{i=0}^3E_i$ such that there exist local sections $s_i\colon E_i \to GX$. As we showed in part 1, every point of $S_4$ lies in the closure of at least four $E_i$. Because we are assuming that there are only four $E_i$ in the decomposition, $S_4$ must lie in the closure of every $E_i$.

Because $X$ is a metric space and thus Hausdorff, the $E_i$ have to be locally closed, which means that they are open in their closure. In particular the intersections $E_i\cap S_4$ yield a decomposition of $S_4$ into disjoint open sets. This implies that if $E_i$ intersects any point of $S_4$, it contains the entire path component of $S_4$ containing that point. However, no path component of $S_4$ admits a continuous (local) section because there is no consistent choice of up or down, due to the non-orientability of $K_{flat}$.

This yields a contradiction, implying that $\GC(K_{flat})\ge4$.
\end{proof}

\section{Embedded torus}\label{sec:embedded-torus}

In this section we prove that the geodesic complexity of the torus embedded in $\R^3$ in the standard way is higher than the geodesic complexity of the flat torus.

Specifically, the standard embedded torus $T_{emb}$ in $\R^3$ with meridian circle of radius $r=1$ and core circle of radius $R=2$ is given by

\[
T_{emb} = \left\{ (x,y,z)\in\R^3 \;\left|\; \left(\sqrt{x^2+y^2}-2\right)^2 + z^2 = 1 \right.\right\}
.\]

\begin{theorem}\label{thm:embedded-torus}
Let $T_{emb}$ be the embedded torus defined above and let $T_{flat}$ be the flat 2-torus. Then
\[\GC(T_{emb}) = 3>\GC(T_{flat})=2.\]
\end{theorem}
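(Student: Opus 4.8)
The plan is the following. Since $\GC(T_{flat}) = 2$ has already been established in Theorem \ref{thm:torus}, the task is to prove $\GC(T_{emb}) = 3$, i.e. the two inequalities $\GC(T_{emb}) \ge 3$ and $\GC(T_{emb}) \le 3$. Both rest on a description of the total cut locus of $T_{emb}$, which I would obtain using the rotational symmetry about the axis, the reflections in the vertical axial planes, and the reflection in the plane $z = 0$; this reduces the problem to understanding the cut locus $C_x$ of a point $x$ as $x$ varies along a meridian arc. The key structural claim is that for a \emph{generic} point $x$ the cut locus $C_x$ is a graph with $\chi(C_x) = -1$ having exactly two trivalent vertices, while for $x$ on a certain one-dimensional subset of $T_{emb}$ (where the extra $z \mapsto -z$ symmetry of $C_x$ forces a degeneration) the two trivalent vertices coalesce into a single quadrivalent vertex and $C_x$ becomes a wedge of two circles, just as for the flat torus. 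Stratifying $T_{emb}\times T_{emb}$ by $S_k = \{(x,y) : \pi^{-1}(x,y) \text{ has exactly } k \text{ elements}\}$ then yields a four-level stratification $S_1, S_2, S_3, S_4$; away from the lower-dimensional locus of conjugate pairs, $\pi$ restricts to a level-wise stratified covering with four levels, and the conjugate pairs play no role in what follows, so one cannot quote Corollary \ref{cor:general-lower-bound} verbatim but the same method applies.

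For the lower bound I would argue locally, as in the proofs of Theorems \ref{thm:torus} and \ref{thm:klein}, near a single carefully chosen pair $(x_0, y_0) \in S_4$, with $x_0$ in the degenerate one-dimensional locus and $y_0$ the quadrivalent vertex of $C_{x_0}$. In a small ball $U$ around $y_0$ the four edges of $C_{x_0}$ cut $U$ into four chambers, labelled by the four geodesics $g_1, g_2, g_3, g_4$ (in cyclic order) from $x_0$ to $y_0$. As $x$ moves off $x_0$ to either side of the degenerate locus the quadrivalent vertex resolves into two trivalent vertices, and the two resolutions of the "$X$" are interchanged by $z \mapsto -z$, so both occur. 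Consequently, arbitrarily close to $(x_0, y_0)$ there are points of $S_3$ whose three geodesics realize each of the four $3$-element subsets of $\{g_1, g_2, g_3, g_4\}$, together with points of $S_2$ realizing the cyclically adjacent pairs. Assuming a decomposition $T_{emb}\times T_{emb} = E_0 \cup E_1 \cup E_2 \cup E_3$ with local sections of $\pi$ and $(x_0,y_0) \in E_0$, the standard diagonal-sequence argument shows in turn: no point of $S_2$ near $(x_0, y_0)$ is interior to a single $E_i$, hence each lies in the closure of at least two $E_i$; no such point of $S_3$ is interior to a union of only two $E_i$, hence each lies in the closure of at least three; and finally $(x_0, y_0)$ lies in the closure of at least four, because a continuous value $s_0(x_0, y_0)$ would have to be simultaneously a limit of geodesics from all four $3$-subsets of $\{g_1, g_2, g_3, g_4\}$, while the intersection of the four $3$-element subsets of a $4$-element set is empty. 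Hence four domains are needed and $\GC(T_{emb}) \ge 3$. In contrast to the Klein bottle, no further increment is forced: $T_{emb}$ is orientable, so the argument at the end of the proof of Theorem \ref{thm:klein} (no path-component of $S_4$ admits a section) does not apply.

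For the upper bound I would exhibit an explicit geodesic motion planner with four domains. Take $E_0 = S_1$, on which the unique geodesic varies continuously by Theorem \ref{thm:cut-locus}. The strata $S_2, S_3, S_4$ do not individually admit global continuous sections — over $S_2$, for instance, the two geodesics lying on either side of an edge of $C_x$ can be interchanged along suitable loops — so, just as for the Klein bottle but without the orientability obstruction, I would cut each of $S_2, S_3, S_4$ along a subset of positive codimension (for example by restricting the latitude of $x$, or by cutting $T_{emb}$ along a meridian) into finitely many path-components over which the monodromy of the relevant covering is trivial, choose on each piece the "upward" or "rightward" geodesic, and regroup the pieces into three locally compact sets $E_1, E_2, E_3$ — the bulk of $S_2$, of $S_3$, and of $S_4$, each together with the lower-dimensional seams left over from the cutting — verifying at each stage that four domains suffice. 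Together with the lower bound this gives $\GC(T_{emb}) = 3 > 2 = \GC(T_{flat})$.

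The main obstacle is the first step: establishing the cut-locus picture precisely enough — that $C_x$ has exactly two trivalent vertices for generic $x$, that these genuinely coalesce into one quadrivalent vertex along a one-dimensional set of source points and not a smaller or larger set (so that the stratification has exactly four levels), and that both resolutions of the quadrivalent vertex are realized near the chosen $(x_0, y_0)$ so that all four $3$-subsets of $\{g_1, g_2, g_3, g_4\}$ occur. Once this geometry is in hand, the lower-bound diagonal argument is routine, and the upper-bound construction, while requiring careful bookkeeping, is straightforward.
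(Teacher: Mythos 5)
There is a genuine gap, and it lies exactly where you locate "the main obstacle": your structural description of the cut locus of $T_{emb}$ is not correct, and the lower-bound argument you build on it does not survive the correction. By the description of Gravesen--Markvorsen--Sinclair--Tanaka \cite{GMST}, which the paper uses, the cut locus $C_p$ of a point $p=(x_0,0,z_0)$ is the union of the opposite meridian, a Jordan curve freely homotopic to the parallels which crosses that meridian in a single point $v_0^p$, and (once $x_0>c_2$) two subarcs of the parallel $z=-z_0$ each ending at a conjugate point. So generically $C_p$ is not a $\theta$-graph with two trivalent vertices: it has a $4$-valent vertex $v_0^p$ (the meridian--Jordan crossing), two trivalent vertices where the subarcs attach, and two $1$-valent conjugate endpoints; moreover the degeneration is not governed by the $z\mapsto -z$ symmetry (i.e.\ not at the equators) but occurs at the critical latitude $x_0=c_1$, where the two trivalent vertices $v_1^p,v_2^p$ are absorbed into $v_0^p$ and that vertex becomes $6$-valent (so your stratification by the number of geodesics also has a stratum of pairs with six geodesics, which your "four-level" picture omits). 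Consequently your key claim --- that arbitrarily close to a chosen $S_4$-pair there are $S_3$-pairs realizing \emph{all four} $3$-element subsets of the four geodesics --- is unjustified and in fact false: at a pair $(p,v_0^p)$ with $x_0\ne c_1$ the local picture is exactly that of the flat torus (only $S_1$, $S_2$ and the vertex occur nearby), which yields only "closure of three sets", i.e.\ $\GC(T_{emb})\ge 2$, not $\ge 3$; and at $x_0=c_1$ the vertex pair has six geodesics, not four, so your intersection-of-$3$-subsets argument does not apply as stated.

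The paper's lower bound uses the actual degeneration instead: one first shows (by your routine chamber/diagonal argument, which is fine) that every pair $(p,q)$ with $q$ a vertex of $C_p$ of degree at least $3$ lies in the closure of all three hypothetical sets $E_0,E_1,E_2$; then, letting $x_0\to c_1$ so that $v_1^p$ and $v_2^p$ merge with $v_0^p$ (Figure \ref{fig:embedded-torus}), the three geodesics to $v_1^p$ pass through chambers $\{A,C,D\}$ and those to $v_2^p$ through $\{B,E,F\}$, and continuity of $s_0$ at the limiting pair would force $s_0((p,v_0^p))$ to lie in $\{A,C,D\}\cap\{B,E,F\}=\emptyset$. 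So the contradiction comes from two disjoint triples among six limiting geodesics, with the degeneration taking place in the source variable $p$ across the critical parallel, not from the four $3$-subsets of a $4$-element fibre. Your upper-bound sketch is closer in spirit to the paper's (which takes $E_0$ the complement of the total cut locus, $E_1$ the edge-interior pairs, $E_2$ the pairs whose target is a vertex other than $v_0^p$, and $E_3$ the pairs $(p,v_0^p)$, making continuous choices using the \cite{GMST} description), but as written it too rests on the erroneous cut-locus picture, so the bookkeeping of strata and monodromies would need to be redone with the correct structure.
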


\begin{proof}
The equality $\GC(T_{flat})=2$ follows from Theorem \ref{thm:torus}.

To compute $\GC(T_{emb})$ we use the description of the cut locus of any point in $T_{emb}$ given by Gravesen, Markvorsen, Sinclair and Tanaka in \cite{GMST}:

The cut locus of a point $p = (x_0, 0, z_0)$ with $x_0 > 0$ on the torus is the union of
\begin{enumerate}[(i)]
\item the opposite meridian $y = 0, x < 0$,
\item a (piecewise $C^1$) Jordan curve which intersects the opposite meridian at a
single point and is freely homotopic to each parallel,
(see Figure 1 in Section 5 of \cite{GMST}) and, if $p$ is sufficiently far from the inner equator, i.e., if $x_0 > c_2$ for some positive constant $c_2$ ($> R - r = 2-1 = 1$),
\item a pair of subarcs of the parallel $z = -z_0$, each with a conjugate point of $p$ as one endpoint and joining
\begin{itemize} \item only the Jordan curve of (ii) if $c_2 < x_0 < c_1$ for some $c_1$, (see Figure 2 in Section 5 of \cite{GMST})
\item both of the above if $x_0 = c_1$ (see Figure 3 in Section 5 of \cite{GMST}) or
\item only the meridian of (i) if $c_1 < x_0$, (see Figure 4 in Section 5 of \cite{GMST}) at their other endpoint.
\end{itemize}
\end{enumerate}

In particular, the cut locus of any point in $T_{emb}$ is a graph. Let $v_0^p$ denote the vertex at the intersection between the opposite meridian and the parallel $z = -z_0$ in the cut locus of a point $p = (x_0, 0, z_0)$ as described above.

In Figure \ref{fig:embedded-torus} we illustrate the three different types of cut locus of a point $p$, in a neighborhood of $v_0^p$.

\begin{figure}[htb!]
\begin{minipage}{.3\linewidth}
  \begin{center}
    \includegraphics[scale=1.1]{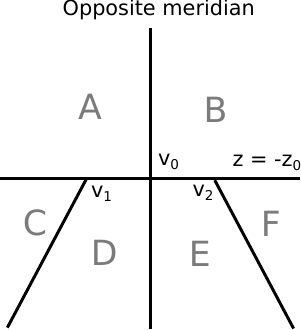}
  \end{center}
\end{minipage}%
\begin{minipage}{.3\linewidth}
  \begin{center}
    \includegraphics[scale=1.1]{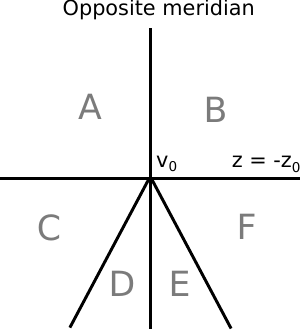}
  \end{center}
\end{minipage}%
\begin{minipage}{.3\linewidth}
  \begin{center}
    \includegraphics[scale=1.1]{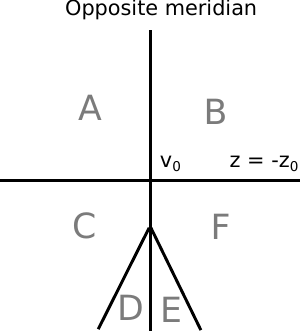}
  \end{center}
\end{minipage}
\caption{The three diagrams represent a neighborhood of the intersection point of the parallel $z=-z_0$ and the opposite meridian in the cut locus of a point $p=(x_0,0,z_0)$. The value of $x_0$ increases from the left to the right.}
\label{fig:embedded-torus}
\end{figure}
%
%

\vspace{.2cm}
\textbf{Lower bound}
\vspace{.1cm}

Assume, for the sake of contradiction, that we have a decomposition $T_{emb} \times T_{emb} = \bigcup_{i=0}^2E_i$ such that theres exist local sections $s_i\colon E_i \to GT_{emb}$.

Let $p$ be a point in $T_{emb}$ and $q$ be a vertex (of degree at least 3) in the cut locus of $p$. Analogously to the proofs of theorems \ref{thm:torus} and \ref{thm:klein} we can show that each such pair $(p,q)$ lies in the closure of $E_0$, $E_1$ and $E_2$. We may assume that $(p,v_0^p)$ is in $E_0$.

Let $v_1^p$ and $v_2^p$ be vertices of the cut locus of a point $p$ as in Figure \ref{fig:embedded-torus}. As the $x$ coordinate of $p$ increases, the vertices $v_1^p$ and $v_2^p$ merge together with $v_0^p$.

There are three geodesics between $p$ and $v_1^p$, each going through one of the three domains A, C and D. Similarly, there are three geodesics between $p$ and $v_2^p$, each going through one of the three domains B, E and F. As we vary $p$ and $v_1^p$ and $v_2^p$ converge to $v_0^p$, the geodesics $s_0((p,v_1^p))$ and $s_0((p,v_2^p))$ need to converge to the same geodesic $s_0((p,v_0^p))$. However, that is impossible, since the geodesic $s_0((p,v_0^p))$ would have to go through a domain contained in $\{\text{A,C,D}\}\cap\{\text{B,E,F}\}=\emptyset$.

\vspace{.2cm}
\textbf{Upper bound}
\vspace{.1cm}

There exists a geodesic motion planner on $T_{emb}$ with the following sets $E_i$.

\begin{itemize}
\item Let $E_0$ be the complement of the total cut locus of $T_{emb}$.
\item Let $E_1$ consist of those pairs $(p,q)$ such that $q$ lies in the interior of an edge of the cut locus $C_p$.
\item Let $E_2$ consist of those pairs $(p,q)$ such that $q$ lies on a vertex of the cut locus $C_p$ other than the vertex $v_0^p$. Recall that $v_0^p$ is the intersection point between the meridian and the Jordan curve.
\item Let  $E_3$ consist of the pairs $(p,v_0^p)$.
\end{itemize}

There are two geodesics between any pair in $E_1$, three geodesics between any pair in $E_2$ and either four or six geodesics between any pair in $E_3$.

Using the description of the cut locus given above we can make a continuous choice of geodesic over each of the sets $E_i$, just as for the Klein bottle in the proof of Theorem \ref{thm:klein}. See also the figures in \cite[Section 5]{GMST}.
\end{proof}

\section{Flat spheres}\label{sec:flat-spheres}

In Section \ref{sec:gap} we constructed a Riemannian metric on $n$-spheres with which the geodesic complexity is strictly greater than the topological complexity. The method used in that section does not work for the 2-sphere, however. It only works for $n\ge3$ because it relies on having an embedded $(n-1)$-dimensional submanifold $M$ with $\TC(M)>\TC(S^n)$. For $n=2$ we have that $\TC(S^2)=2$ and $\TC(M)\le2\text{dim}(M)=2$.

In this section we exhibit a metric on the 2-sphere which yields a higher geodesic complexity than that of the standard 2-sphere. This metric space was provided by Jarek K\k{e}dra as an example with a pathological total cut locus when the author was trying to get a better intuition about the cut locus of general metric spaces.

Note that the ideas used in the proof are completely different from the method of Section \ref{sec:gap}. Instead, we are using the same ideas as for the flat torus and the flat Klein bottle in Section \ref{sec:examples} and for the embedded torus in Section \ref{sec:embedded-torus}, which are conceptualized in Section \ref{sec:cut-locus}.

\begin{definition}
Let $W$ be the boundary of the 3-cube with the flat metric. The flat metric comes from regarding $W$ as a subset of the plane with the edges identified appropriately (see Figure \ref{fig:cube-coordinates}). This is a topological manifold which is homeomorphic to the 2-sphere. We call it the flat 2-sphere.
\end{definition}

The following theorem is somewhat surprising. Unlike for the spheres in Section \ref{sec:gap}, we did not construct the metric purposefully to get a higher geodesic complexity in this case.

\begin{theorem}\label{thm:flat-sphere}
Let $W$ be the flat 2-sphere. Then $\GC(W)\ge3>2=\TC(W)$.
\end{theorem}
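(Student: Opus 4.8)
The plan is to establish the two claims separately. The equality $\TC(W)=2$ is immediate from the fact that $W$ is homeomorphic to $S^2$ and $\TC$ is a homotopy invariant of ENRs, together with Farber's computation $\TC(S^2)=2$ (Theorem \ref{thm:sphere-torus}). So the real content is the lower bound $\GC(W)\ge 3$, and for this I would mimic the strategy used for the flat torus, the flat Klein bottle, and the embedded torus: find one point $x\in W$ whose cut locus $C_x$ has a vertex $v$ of sufficiently high local complexity, analyze the chamber decomposition of a small ball around $v$ determined by the geodesics from $x$, and show that as $x$ is perturbed the nearby lower-complexity vertices collide at $v$ in such a way that no continuous local section can be defined in a neighborhood of $(x,v)$, iterating the contradiction argument to force at least four domains of continuity.

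Concretely, first I would describe the cut locus structure on $W$. A natural candidate for $x$ is the center of a face (or, to get extra symmetry and a high-degree vertex, a vertex of the cube, or the midpoint of an edge). Unfolding $W$ into the plane as in Figure \ref{fig:cube-coordinates}, one computes the cut locus $C_x$ as the projection of the bisectors between $x$ and its images under the unfoldings, exactly as the author did for the Klein bottle via the universal cover. The key geometric fact I would need to extract is that there is a point $x$ and a vertex $v\in C_x$ around which the small ball decomposes into (at least) four chambers labelled by four distinct geodesics from $x$ to $v$, and — crucially — that these four chambers arise as the limit of the chamber decompositions at nearby vertices of lower degree as a second parameter (the position of $x$, or of $v$ along an edge of $C_x$) degenerates. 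This is the analogue of the figures \ref{fig:chambers-klein2} and \ref{fig:chambers-klein3} for the Klein bottle: several lower-level vertices merging into one top-level vertex.

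Once that geometric picture is in hand, the homotopy-theoretic argument is essentially the one in the proof of Theorem \ref{thm:klein}. Assume for contradiction a decomposition $W\times W=\bigcup_{i=0}^{2}E_i$ with local sections $s_i$. By the recursive continuity argument (a pair in a stratum $S_k$ cannot lie in the closure of fewer than $k$ of the $E_i$, shown by a diagonal sequence argument at successively higher strata), every pair $(x,v)$ with $v$ a high-degree vertex of $C_x$ must lie in the closure of all three sets $E_0,E_1,E_2$; then approaching $(x,v)$ along sequences in $E_0$ sitting over vertices with each of the four limiting chamber types forces $s_0(x,v)$ to lie in an empty intersection of chamber labels, a contradiction. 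As the author notes in the remark after Corollary \ref{cor:general-lower-bound}, only a local argument near a single point is needed. The main obstacle — and the reason this is stated as $\GC(W)\ge 3$ rather than an equality with a clean level-wise-stratified-covering description — is precisely the explicit determination of the cut locus of $W$ and the verification that it does contain a point $x$ whose cut locus has a vertex exhibiting the required four-chamber collision behaviour; the flat but singular geometry at the cube's eight corners makes $C_x$ genuinely more delicate than in the flat torus or Klein bottle cases, and care is needed to choose $x$ avoiding the corners (or to handle the corner contributions) so that the chamber analysis near $v$ is clean.
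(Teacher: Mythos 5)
Your reduction of $\TC(W)=2$ to Farber's computation and homotopy invariance is exactly what the paper does, and your general template for the lower bound (a local, recursive nested-sequence/diagonal argument forcing a putative section $s_0$ to take a value in an empty intersection of geodesic labels, so that four domains of continuity are needed) is also the paper's framework. But the proposal has a genuine gap, and you name it yourself: you never verify that $W$ actually contains a configuration with the required four-level degeneration and empty-intersection property. That verification is not a routine afterthought -- it is the bulk of the paper's proof. The paper does it by a completely explicit computation: it takes pairs $(x,y)$ with $x,y$ on \emph{opposite faces}, lists the $12$ candidate paths $A_1,\dots,A_{12}$ with their squared lengths, and shows that when both points approach a pair of \emph{opposite corners} $(p,q)$ along the face diagonals at equal rates there are exactly four geodesics $\{A_1,A_4,A_7,A_{10}\}$; perturbing the rate splits these into the $2$-geodesic families $\{A_1,A_4\}$ and $\{A_7,A_{10}\}$, and a further perturbation isolates a unique geodesic. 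This produces the nested sequences realizing levels $1$ through $3$, and the top level is the corner pair $(p,q)$ itself, which has six geodesics $D_1,\dots,D_6$; the $3$-fold rotational symmetry about the $pq$-axis gives three approaching families whose limiting label sets $\{D_1,D_3,D_4,D_6\}$, $\{D_2,D_3,D_5,D_6\}$, $\{D_1,D_2,D_4,D_5\}$ have empty intersection.

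Two concrete points where your plan diverges in a way that would cause trouble if carried out as stated. First, you propose to fix one point $x$ (a face center, say), study a single cut locus $C_x$, and even to ``choose $x$ avoiding the corners''; in the paper the corners are not avoided but are the essential degenerate locus -- the top-level pair is a pair of opposite \emph{corners}, and both members of the pair vary (approaching $p$ and $q$ simultaneously), since the intermediate $4$-geodesic stratum only appears along the symmetric diagonal locus $x_d=y_d$, not as a vertex of the cut locus of a fixed generic $x$. Second, a single vertex of some $C_x$ with four chambers around it would, by the torus-style argument, only show that the pair lies in the closure of three sets, i.e.\ $\GC(W)\ge 2$; to reach $\GC(W)\ge 3$ you need the additional collision of distinct lower-level configurations whose label sets intersect trivially, and whether $W$ exhibits this (and where) is precisely the content you deferred. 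So the strategy is right, but without Step~1 of the paper -- the explicit identification of the opposite-corner configuration and the length computations certifying the geodesic counts at each level -- the proof is not complete.
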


\begin{remark}
It should be stressed that $W$ is a topological manifold and a metric space, but not a Riemannian manifold or even a smooth manifold. In fact, it is well-known that there cannot be a 2-sphere with a flat Riemannian manifold, as follows from the Gauss-Bonnet Theorem. That said, the theorem might still hold after slightly smoothing the edges and corners. The proof seems to carry over to that case, intuitively. However, explicitly describing the geodesics on such a space would require methods from differential geometry.
\end{remark}

\begin{proof}

The fact that $\TC(W)=\TC(S^2)=2$ follows from Theorem \ref{thm:sphere-torus} and the homotopy invariance of topological complexity (since $W$ and $S^2$ are both ENRs).

The total cut locus of $W$ is quite complicated. However, just as in the previous section, the argument relies on understanding the different geodesics between points arbitrarily close to a specific pair of points. Concretely, in this case it suffices to consider (a subset of) the neighborhood of a pair of opposite corners in $W \times W$.

\vspace{.2cm}
\textbf{Step 1: Geodesics between pairs approaching opposite corners}
\vspace{.1cm}

Let $(x,y)$ be a pair of points with $x$ and $y$ on opposite faces. Consider the coordinates $(x_1,x_2)$ and $(y_1,y_2)$ for opposite faces given in Figure \ref{fig:cube-coordinates}, in which the midpoints of each face acts as the origin and the squares have side length 1.

\begin{figure}[hbt!]
  \centering
    \includegraphics{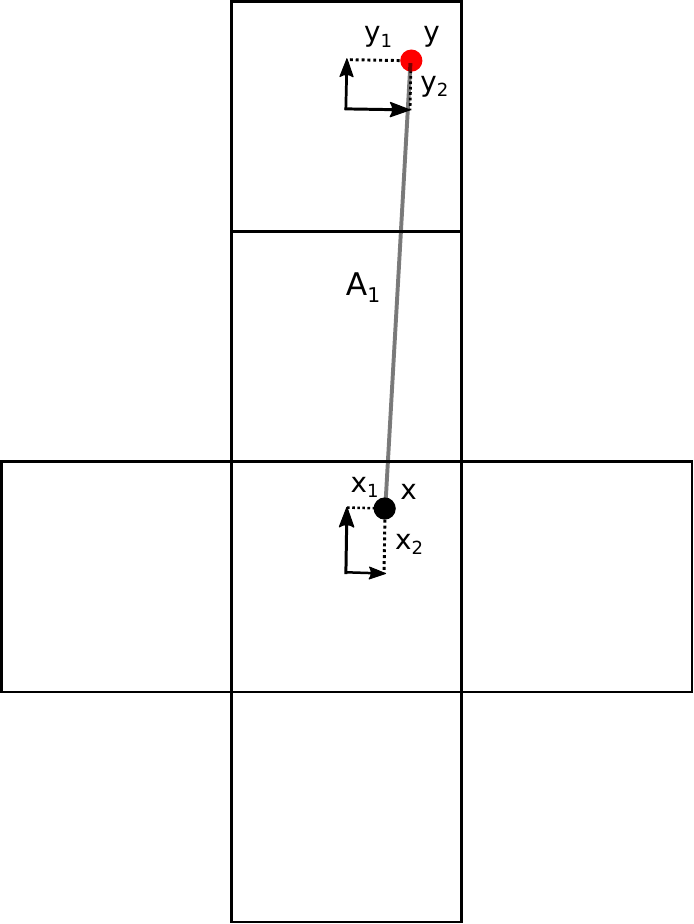}
    \caption{We introduce coordinates for points $x$ and $y$ on opposite faces.}
    \label{fig:cube-coordinates}
  \end{figure}  

As seen in Figure \ref{fig:cube-paths}, there are \textit{at most} 12 paths $A_1$, $A_2$, \dots , $A_{12}$ which could potentially be geodesics between $x$ and $y$.

\begin{figure}[hbt!]
  \centering
    \includegraphics{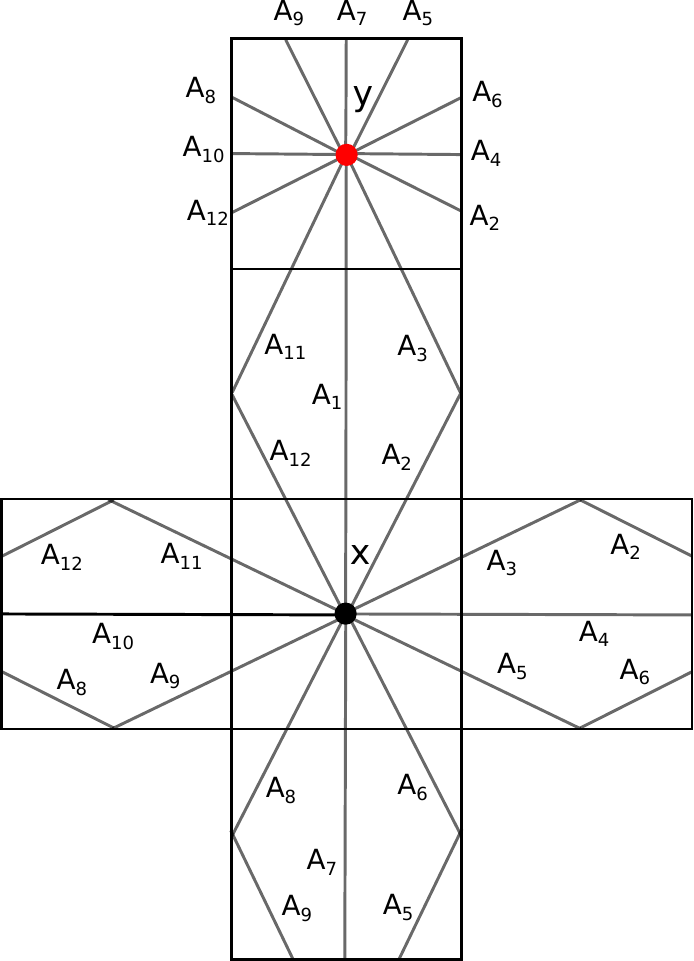}
    \caption{There are 12 paths $A_1$,\ldots, $A_{12}$ which are \textit{potentially} geodesics between two points on opposite faces, depending on the specific positions of the points. To make the picture more symmetric, $x$ and $y$ have been chosen to be on the midpoints of the faces. The points $x$ and $y$ can be anywhere on the two faces. However, note that some of the paths are not admissible for every choice of $x$ and $y$. That being said, the paths $A_1$, $A_4$, $A_7$ and $A_{10}$ are admissible for all $x$ and $y$ on opposite faces.}
        \label{fig:cube-paths}
  \end{figure}

Denoting the length of the path $A_i$ by $L_i$, we have the following.

\begin{itemize}
\item $L_1^2 = (x_1-y_1)^2 + (2-x_2+y_2)^2$
\item $L_2^2 = (1-x_1+y_2)^2 + (2-x_2-y_1)^2$
\item $L_3^2 = (1-x_2-y_1)^2 + (2-x_1+y_2)^2$
\item $L_4^2 = (x_2+y_2)^2 + (2-x_1-y_1)^2$
\item $L_5^2 = (1+x_2-y_1)^2 + (2-x_1-y_2)^2$
\item $L_6^2 = (1-x_1-y_2)^2 + (2+x_2-y_1)^2$
\item $L_7^2 = (x_1-y_1)^2 + (2+x_2-y_2)^2$
\item $L_8^2 = (1+x_1-y_2)^2 + (2+x_2+y_1)^2$
\item $L_9^2 = (1+x_2+y_1)^2 + (2+x_1-y_2)^2$
\item $L_{10}^2 = (x_2+y_2)^2 + (2+x_1+y_1)^2$
\item $L_{11}^2 = (1-x_2+y_1)^2 + (2+x_1+y_2)^2$
\item $L_{12}^2 = (1+x_1+y_2)^2 + (2-x_2+y_1)^2$
\end{itemize}

It can be readily seen that all the $L_i^2$ have a common summand $x_1^2+x_2^2+y_1^2+y_2^2+4$ once we multiply the squares out. Once we subtract that common summand, all expressions have a common factor of 2. To better compare the lengths $L_i$ we will consider the ``normalized'' square lengths $N_i = (L_i^2 - (x_1^2+x_2^2+y_1^2+y_2^2+4))/2$:

\begin{itemize}
\item $N_1 = - x_1y_1 - 2x_2 + 2y_2 - x_2y_2$
\item $N_2 = \frac12 - x_1 + y_2 - x_1y_2 - 2x_2 - 2y_1 + x_2y_1$
\item $N_3 = \frac12 - x_2 - y_1 + x_2y_1 - 2x_1 + 2y_2 - x_1y_2$
\item $N_4 = x_2y_2 - 2x_1 - 2y_1 + x_1y_1$
\item $N_5 = \frac12 + x_2 - y_1 - x_2y_1 - 2x_1 - 2y_2 + x_1y_2$
\item $N_6 = \frac12 - x_1 - y_2 + x_1y_2 + 2x_2 - 2y_1 - x_2y_1$
\item $N_7 = - x_1y_1 + 2x_2 - 2y_2 - x_2y_2$
\item $N_8 = \frac12 + x_1 - y_2 - x_1y_2 + 2x_2 + 2y_1 + x_2y_1$
\item $N_9 = \frac12 + x_2 + y_1 + x_2y_1 + 2x_1 - 2y_2 - x_1y_2$
\item $N_{10} = x_2y_2 + 2x_1 + 2y_1 + x_1y_1$
\item $N_{11} = \frac12 - x_2 + y_1 - x_2y_1 + 2x_1 + 2y_2 + x_1y_2$
\item $N_{12} = \frac12 + x_1 + y_2 + x_1y_2 - 2x_2 + 2y_1 - x_2y_1$
\end{itemize}

Now we want to consider two points moving towards a pair of opposite corners $p$ and $q$ with coordinates $p_1=p_2=-1/2$ and $q_1=-q_2=1/2$, along the diagonals of opposite faces; see Figure \ref{fig:cube-diagonals}. The diagonals are given by $x_1=x_2$ and $y_1=-y_2$. For convenience we introduce new coordinates $x_d= -x_1 = -x_2$ and $y_d=y_1=-y_2$ to describe pairs of points on the diagonals. Because we are going to consider points close to the corners $p$ and $q$ we will limit ourselves to the case $0<x_d,y_d<1/2$.

  \begin{figure}[hbt!]
  \centering
    \includegraphics{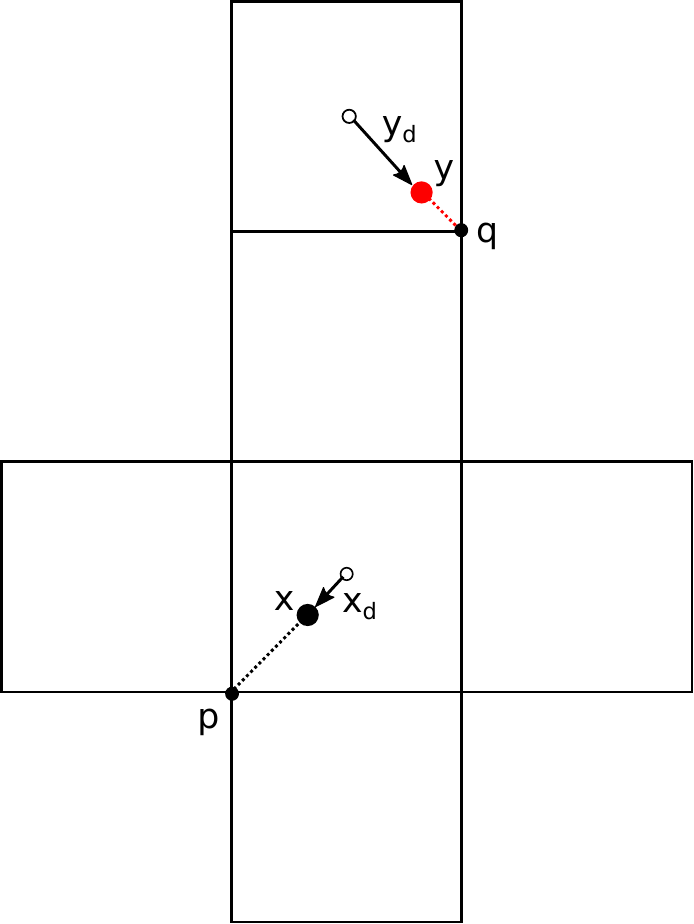}
    \caption{We introduce coordinates for points on opposite faces approaching the corner along the diagonal on the respective face.}
    \label{fig:cube-diagonals}
  \end{figure}

Assuming that $x_d= -x_1 = -x_2$ and $y_d=y_1=-y_2$, the normalized lengths above turn into the following expressions (substitute $x_1= -x_d$, $x_2= -x_d$, $y_1= y_d$ and $y_2= -y_d$).

\begin{itemize}
\item $N_1 = 2(x_d-y_d)$
\item $N_2 = \frac12 + 3(x_d-y_d) - 2x_dy_d$
\item $N_3 = \frac12 + 3(x_d-y_d) - 2x_dy_d$
\item $N_4 = 2(x_d-y_d)$
\item $N_5 = \frac12 + x_d + y_d + 2x_dy_d$
\item $N_6 = \frac12 - x_d - y_d + 2x_dy_d$
\item $N_7 = -2(x_d-y_d)$
\item $N_8 = \frac12 - 3(x_d-y_d) - 2x_dy_d$
\item $N_9 = \frac12 - 3(x_d-y_d) - 2x_dy_d$
\item $N_{10} = -2(x_d-y_d)$
\item $N_{11} = \frac12 - x_d - y_d + 2x_dy_d$
\item $N_{12} = \frac12 + x_d + y_d + 2x_dy_d$
\end{itemize}

If we further assume that the points are at the same distance from the corners (approaching the corners at the same rate) and set $z=x_d=y_d$ (see Figure \ref{fig:cube-sym-diagonals}) the expressions simplify further:

\begin{itemize}
\item $N_1 = 0$
\item $N_2 = \frac12 - 2z^2$
\item $N_3 = \frac12 - 2z^2$
\item $N_4 = 0$
\item $N_5 = \frac12 + 2z + 2z^2$
\item $N_6 = \frac12 - 2z + 2z^2$
\item $N_7 = 0$
\item $N_8 = \frac12 - 2z^2$
\item $N_9 = \frac12 - 2z^2$
\item $N_{10} = 0$
\item $N_{11} = \frac12 - 2z + 2z^2$
\item $N_{12} = \frac12 + 2z + 2z^2$
\end{itemize}

  \begin{figure}[hbt!]
  \centering
    \includegraphics{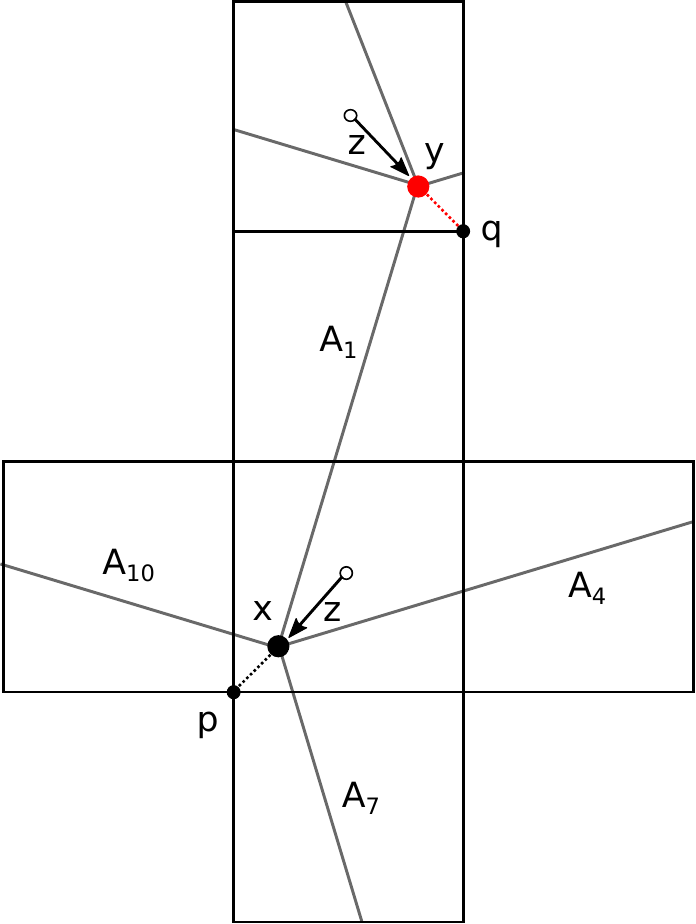}
    \caption{We introduce one coordinate $z$ for points on opposite faces approaching the corner along the diagonal on the respective face such that they are always at the same distance from the corners. There are precisely four geodesics for such pairs of points.}
    \label{fig:cube-sym-diagonals}
  \end{figure}

Note that $N_1$, $N_4$, $N_7$ and $N_{10}$ are equal and smaller than all other $N_i$ when $-x_1=-x_2=y_1=-y_2=z$, since all the other $N_i$ are positive, as long as $0<z<\frac12$.

Because the $N_i$ vary continuously, there is a small neighborhood around every pair $(x,y)$ with $0<-x_1=-x_2=y_1=-y_2=z<\frac12$ in which $N_1$, $N_4$, $N_7$ and $N_{10}$ are shorter than all other $N_i$. Let $U_A$ be the union of all such neighborhoods. Assuming that $(x,y)$ is in $U_A$, to determine all the geodesics between $x$ and $y$ we can immediately disregard all paths except $A_1$, $A_4$, $A_7$ and $A_{10}$.

Note that for pairs $(x,y)$ with $0<-x_1=-x_2=y_1=-y_2=z<\frac12$ all four paths $A_1$, $A_4$, $A_7$ and $A_{10}$ have the same length, which means that such pairs have exactly four preimages under $\pi \colon GW \to W \times W$. Therefore, we can construct a sequence $(s_A^i)$ converging to $(p,q)$ with coordinates
\[s_A^i=\left(\left(-\frac12+\frac1{5i},-\frac12+\frac1{5i}\right),\left(\frac12-\frac1{5i},-\frac12+\frac1{5i}\right)\right),\]
such that $\pi^{-1}\left(s_A^i\right)=\{A_1,A_4,A_7,A_{10}\}$.

Furthermore, for every sequence component $s_A^i$ we may construct two sequences $(r_{iI}^j)$ and $(r_{iII}^j)$ converging to $s_A^i$ with coordinates
\[r_{iI}^j=\left(\left(-\frac12+\frac1{5i}+\frac1{5j},-\frac12+\frac1{5i}+\frac1{5j}\right),\left(\frac12-\frac1{5i},-\frac12+\frac1{5i}\right)\right)\]
and
\[r_{iII}^j=\left(\left(-\frac12+\frac1{5i}-\frac1{5j},-\frac12+\frac1{5i}-\frac1{5j}\right),\left(\frac12-\frac1{5i},-\frac12+\frac1{5i}\right)\right),\]
such that $\pi^{-1}(r_{iI}^j)=\{A_1,A_4\}$ and $\pi^{-1}(r_{iII}^j)=\{A_7,A_{10}\}$. This is because $N_1=N_4<N_7=N_{10}$ if $x_d<y_d$ and $N_1=N_4>N_7=N_{10}$ if $x_d>y_d$.

Finally, for every sequence component $r_{iI}^j$ we can construct two sequences $(t_{ijI}^k)$ and $(t_{ijII}^k)$ converging to $r_{iI}^j$ such that $\pi^{-1}(t_{ijI}^k)=\{A_1\}$ and $\pi^{-1}(t_{ijII}^k)=\{A_4\}$, and similarly for $(r_{iII}^j)$. Intuitively, moving the first point of the pair $r_{iI}^j$ slightly in the direction of the path $A_1$ results in a pair with unique geodesic $A_1$ and similarly for $A_4$.

For the sake of concreteness, we may choose
\[t_{ijI}^k=\left(\left(-\frac12+\frac1{5i}+\frac1{5j},-\frac12+\frac1{5i}+\frac1{5j}+\frac1{100k}\right),\left(\frac12-\frac1{5i},-\frac12+\frac1{5i}\right)\right)\]
and
\[t_{ijII}^k=\left(\left(-\frac12+\frac1{5i}+\frac1{5j}+\frac1{100k},-\frac12+\frac1{5i}+\frac1{5j}\right),\left(\frac12-\frac1{5i},-\frac12+\frac1{5i}\right)\right).\]

To see why $\pi^{-1}(t_{ijI}^k)=\{A_1\}$ for $k$ big enough, it is enough to check that for $x_2=x_1+\epsilon$, $x_1=y_d-\delta$ and $y_1=-y_2=y_d$ we have $N_1<N_4,N_7,N_{10}$ (assuming that $\epsilon,\delta>0$ and small). This is a straightforward calculation, substituting the above expressions for $x$ and $y$ in the formulas for the appropriate $N_j$ and subtracting them pairwise. Similarly for $\pi^{-1}(t_{ijII}^k)=\{A_4\}$.

\vspace{.2cm}
\textbf{Step 2: Proof by contradiction that at least four $\mathbf{E_i}$ are necessary}
\vspace{.1cm}

Assume that we have a decomposition into disjoint locally compact sets $W\times W = \bigcup_{i=0}^2 E_i$ with a local section $s_i$ of $\pi \colon GW \to W \times W$ over each $E_i$. This will yield a contradiction to the continuity of the local sections $s_i$, analogously to the arguments in the proofs of the theorems \ref{thm:torus}, \ref{thm:klein} and \ref{thm:embedded-torus}.

First we show that all components of the sequences $(r_{iI}^j)$ and $(r_{iII}^j)$ are in the closure of two $E_i$. Assume that there exists a sequence component $r_{iI}^j$ which is in the interior of $E_0$, for instance. Then we may assume that $(t_{ijI}^k)$ and $(t_{ijII}^k)$ are in $E_0$ by taking a subsequence if necessary. By continuity, this would imply that $s_0(t_{ijI}^k)=A_1$ and $s_0(t_{ijII}^k)=A_4$ need to both converge to the same path $s_0(r_{iI}^j)$ as $k$ tends to infinity. This yields a contradiction, implying that every $r_{iI}^j$ lies in the closure of two $E_i$. The same argument applies to $(r_{iII}^j)$.

Next we show that all components of the sequence $(s_A^i)$ are in the closure of three $E_i$. Assume that there exists a sequence component $s_A^i$ which has a neighborhood $V_\epsilon$ which is fully contained in $E_0 \cup E_1$, for instance. We may assume that the sequences  $(r_{iI}^j)$ and $(r_{iII}^j)$ are contained in $V_\epsilon$. We showed that all components of the sequences $(r_{iI}^j)$ and $(r_{iII}^j)$ are in the closure of two $E_i$. Because we assumed that $V_\epsilon \subset E_0 \cup E_1$, this means that every $r_{iI}^j$ and $r_{iII}^j$ lies in the closure of $E_0$ (and $E_1$). By a diagonal argument we can construct two new sequences $(\tilde r_{iI}^j)$ and $(\tilde r_{iII}^j)$ converging to $s_A^i$ which are contained in $E_0$, and such that $\pi^{-1}(\tilde r_{iI}^j)\in\{A_1,A_4\}$ and $\pi^{-1}(\tilde r_{iII}^j)\in\{A_7,A_{10}\}$. This is because pairs sufficiently close to $r_{iI}^j$ and $r_{iII}^j$ cannot have other $A_i$ as geodesics, since the length of the $A_i$ varies continuously. By continuity, this would imply that $s_0(\tilde r_{iI}^j)\in\{A_1,A_4\}$ and $s_0(\tilde r_{iII}^j)\in\{A_7,A_{10}\}$ need to both converge to the same path $s_0(s_{A}^i)$ as $j$ tends to infinity. This yields a contradiction, implying that every $s_A^i$ lies in the closure of three $E_i$.

Finally, it just remains to show that this in turn implies that there is no way to choose a geodesic for $s_0((p,q))$ that would make $s_0$ continuous at the pair $(p,q)$ of opposite corners.

There are two other faces adjacent to the corner $p$, denoted $B$ and $C$ in Figure \ref{fig:cube-corners}. By using the 3-fold rotation symmetry of the cube around the corners $p$ and $q$ we can construct a neighborhood $U_B$ within which the geodesics are $B_1$, $B_4$, $B_7$ and $B_{10}$ and a neighborhood $U_C$ within which the geodesics are $C_1$, $C_4$, $C_7$ and $C_{10}$, as well as a sequence $(s_B^i)$ converging to $(p,q)$, such that $\pi^{-1}(s_B^i)=\{B_1,B_4,B_7,B_{10}\}$ and a sequence $(s_C^i)$ converging to $(p,q)$, such that $\pi^{-1}(s_C^i)=\{C_1,C_4,C_7,C_{10}\}$. Here the paths $B_i$ and $C_i$ are the result of rotating $A_i$ around the axis going through $p$ and $q$, and similarly for $U_B$, $U_C$, $(s_B^i)$ and $(s_C^i)$.

There are precisely six geodesics between $p$ and $q$ as seen in Figure \ref{fig:cube-corners}. We denote them $D_i$, $1\le i \le 6$, just as in the figure. The $D_i$ are limits of the paths $A_i$, $B_i$ and $C_i$ as pairs of points in $U_A$, $U_B$ and $U_C$ approach $(p,q)$. Specifically:

\begin{itemize}
\item The paths $A_1$, $A_4$, $A_7$ and $A_{10}$ converge to $D_3$, $D_4$, $D_6$ and $D_1$ respectively.
\item The paths $B_1$, $B_4$, $B_7$ and $B_{10}$ converge to $D_5$, $D_6$, $D_2$ and $D_3$ respectively.
\item The paths $C_1$, $C_4$, $C_7$ and $C_{10}$ converge to $D_1$, $D_2$, $D_4$ and $D_5$ respectively.
\end{itemize}

Just as we did above for $s_A^i$, we can show that all $s_B^i$ and all $s_C^i$ lie in the closure of $E_0$ (and of $E_1$ and $E_2$). Just as above, this allows us to use a diagonal argument to construct new sequences $(\tilde s_A^i)$, $(\tilde s_B^i)$ and $(\tilde s_C^i)$ contained in $E_0$ and converging to $(p,q)$ such that $\pi^{-1}(\tilde s_A^i)\in\{A_1,A_4,A_7,A_{10}\}$, $\pi^{-1}(\tilde s_B^i)\in\{B_1,B_4,B_7,B_{10}\}$ and $\pi^{-1}(\tilde s_C^i)\in\{C_1,C_4,C_7,C_{10}\}$.

By continuity, we need $s_0(\tilde s_A^i)$, $s_0(\tilde s_B^i)$ and $s_0(\tilde s_C^i)$ to converge to the same path $s_0((p,q))$. Thus, $s_0$ can only be continuous if the geodesic $s_0((p,q))$ lies in the following set $\{D_3, D_4 , D_6 , D_1\} \cap \{D_5, D_6 , D_2, D_3\} \cap \{D_1, D_2, D_4, D_5\} = \emptyset$. This yields a contradiction to the continuity of $s_0$.
\end{proof}  

%


  
    \begin{figure}[hbt!]
  \centering
    \includegraphics{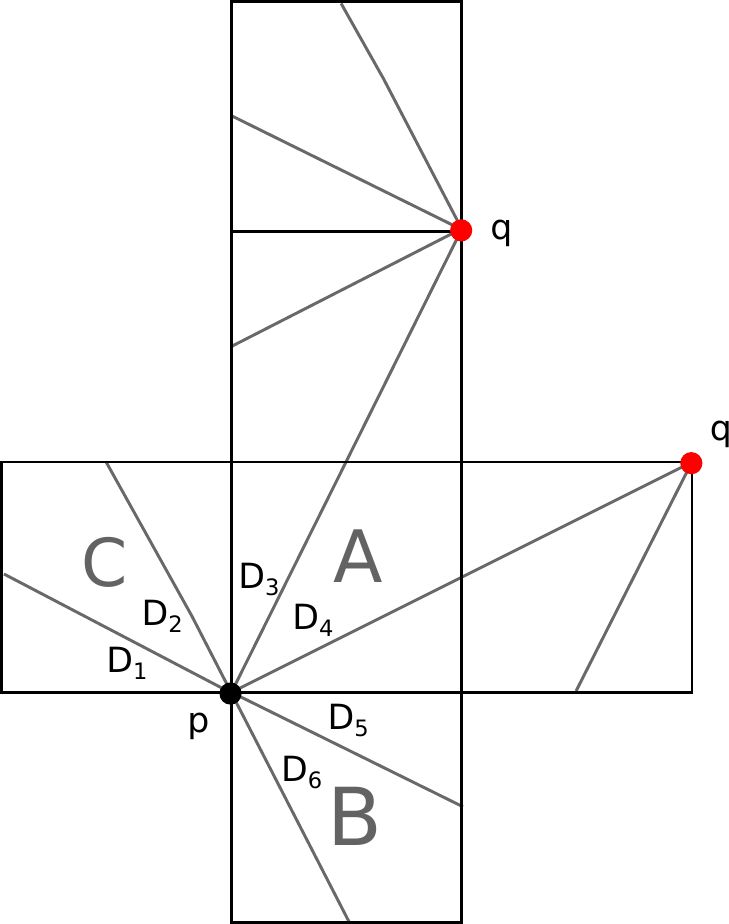}
    \caption{There are precisely six geodesics between opposite corners $p$ and $q$. We denote the faces adjacent to $p$ by $A$, $B$ and $C$.}
    \label{fig:cube-corners}
  \end{figure}

\begin{remark}
%
The map $GW \to W \times W$ seems to yield a level-wise stratified covering over a stratified space just as made explicit for the torus and the Klein bottle, but it appears to be a hard problem to explicitly understand the whole total cut locus. Nonetheless, it would be very interesting to understand the total cut locus and it seems likely that this would allow us to construct a geodesic motion planner with four sets, which would show $\GC(W)=3$.
\end{remark}

\begin{remark}
Presumably for the $n$-dimensional analogues $W^n$ of $W$ ($n$-spheres seen as boundary of $n+1$ cubes, with the flat metric) the geodesic complexity is $n+1$, while the topological complexity oscillates between 1 and 2. If this could be shown, it would yield another (maybe more natural) family of examples where the gap between TC and GC is unbounded.
\end{remark}

\section{Defining geodesic complexity using shortest paths}\label{sec:alternative-definition}

Recall that we say that a path $\gamma$ in a metric space $(X,\newd)$ is a geodesic if there exists a number $\lambda$ such that
\[ \newd(\gamma(t),\gamma(t')) = \lambda |t-t'|\]
for all $0 \le t < t' \le 1$.

It immediately follows from Definition \ref{def:length} that $\ell(\gamma)=\newd(\gamma(0),\gamma(1)) = \lambda$. It is also clear from the definition that $\newd(\gamma(0),\gamma(1))$ is the shortest length a path from $\gamma(0)$ to $\gamma(1)$ could possibly have.

\begin{definition}
We say that a path $\gamma$ in a metric space $(X,\newd)$ is a \textit{shortest path} if $\ell(\gamma)=\newd(\gamma(0),\gamma(1))$.
\end{definition}

\begin{definition}
Let $\gamma$ be a path in a metric space $(X,\newd)$. We say that $\gamma'$ is a \textit{reparametrization} of $\gamma$ if $\gamma'\circ\psi = \gamma$ for some non-decreasing surjective map $\psi\colon [0,1] \to [0,1]$.
\end{definition}

Geodesics are precisely the shortest paths which are parametrized proportional to arc length, i.e.\ have constant speed $\lambda$. Any non-trivial reparametrization of a geodesic is no longer a geodesic, but it will still be a shortest path, since the length of a path is independent of the parametrization. This rigidity is very useful in the lower bound arguments for level-wise stratified coverings in Section \ref{sec:cut-locus} and for the examples in sections \ref{sec:examples}, \ref{sec:embedded-torus} and \ref{sec:flat-spheres}. However, one might ask if allowing more flexibility in the parametrization might make it possible to find motion planners with lower complexity (meaning decompositions into fewer sets). In Theorem \ref{thm:reparametrization} below we show that this is not the case, but first we need some preliminary definitions.

By \cite[Chapter I, 1.22 Remark]{BH}, any shortest path can be reparametrized to be a geodesic as defined above: Let $\gamma$ be a path with $\ell(\gamma)=\newd(\gamma(0),\gamma(1))\ne0$. Then the map $\lambda\colon [0,1]\to[0,1]$ defined by
\[
\lambda(t) \coloneqq \frac{\ell(\gamma|_{[0,t]})}{\ell(\gamma)},
\]
yields a reparametrization $\gamma'$ which is a geodesic (concretely, we have $\gamma = \gamma'\circ\lambda$). If $\ell(\gamma) = 0$, meaning that $\gamma$ is constant, $\gamma$ is already a geodesic and we can set $\lambda = \text{id}|_{[0,1]}$. Note that in \cite{BH} geodesics are required to have unit speed and are defined on intervals of varying length. Because under our definition geodesic paths are always defined on the interval $[0,1]$, we need to divide by the total length in the formula for $\lambda(t)$ above. In fact, our definition of geodesic corresponds precisely to the definition of \textit{linearly reparametrized geodesic} in \cite{BH}, which are allowed to have any constant speed.

\begin{definition}
Let $(X,\newd)$ be a metric space. Denote by $\widetilde{GX}\subset PX$ the space of all shortest paths and denote by $\tilde\pi \colon \widetilde{GX} \to X \times X$ the restriction of the evaluation map.
\end{definition}

\begin{theorem}\label{thm:reparametrization}
Replacing $\pi \colon GX \to X \times X$ by $\tilde\pi \colon \widetilde{GX} \to X \times X$ in the definition of geodesic complexity would result in an equivalent definition. In other words, the locally compact sectional categories of $\pi$ and $\tilde\pi$ agree: $\enrsecat{\pi} = \enrsecat{\tilde\pi}$.
\end{theorem}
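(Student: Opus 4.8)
The plan is to produce two fibrewise continuous maps relating $\pi\colon GX\to X\times X$ and $\tilde\pi\colon\widetilde{GX}\to X\times X$, one witnessing each inequality between $\enrsecat(\pi)$ and $\enrsecat(\tilde\pi)$. For the easy direction, note that every geodesic is a shortest path, so the inclusion $\iota\colon GX\hookrightarrow\widetilde{GX}$ is a (continuous) subspace inclusion over $X\times X$, i.e. $\tilde\pi\circ\iota=\pi$. Hence if $X\times X=\bigcup_{i=0}^k E_i$ is a decomposition into pairwise disjoint locally compact sets admitting local sections $s_i\colon E_i\to GX$ of $\pi$, then $\iota\circ s_i\colon E_i\to\widetilde{GX}$ are local sections of $\tilde\pi$ on the same domains, so $\enrsecat(\tilde\pi)\le\enrsecat(\pi)$.

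For the reverse inequality I would build the constant-speed reparametrization map $R\colon\widetilde{GX}\to GX$. For a shortest path $\gamma$ with $\ell(\gamma)=L>0$, put $\lambda_\gamma(t)=\ell(\gamma|_{[0,t]})/L$; since restrictions of a shortest path to subintervals are shortest and lengths add, $\lambda_\gamma(t)=d(\gamma(0),\gamma(t))/L$, a continuous non-decreasing surjection of $[0,1]$ with $\lambda_\gamma(0)=0$, $\lambda_\gamma(1)=1$. As recalled before the theorem (cf.\ \cite[Chapter I, 1.22 Remark]{BH}), there is a unique geodesic $\gamma'$ with $\gamma=\gamma'\circ\lambda_\gamma$, well-defined because $\lambda_\gamma(t_1)=\lambda_\gamma(t_2)$ forces $d(\gamma(t_1),\gamma(t_2))\le\ell(\gamma|_{[t_1,t_2]})=0$; set $R(\gamma):=\gamma'$, and $R(\gamma):=\gamma$ when $\gamma$ is constant. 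Then $\pi(R(\gamma))=(\gamma'(0),\gamma'(1))=(\gamma(0),\gamma(1))=\tilde\pi(\gamma)$, and $R|_{GX}=\mathrm{id}_{GX}$ since a geodesic already satisfies $\lambda_\gamma=\mathrm{id}$. Granting that $R$ is continuous, for any shortest-path motion planner $\tilde s_i\colon E_i\to\widetilde{GX}$ the maps $R\circ\tilde s_i\colon E_i\to GX$ form a geodesic motion planner on the same domains, giving $\enrsecat(\pi)\le\enrsecat(\tilde\pi)$, and the two inequalities combine to the theorem.

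The substantive step is the continuity of $R$. Since $X$ is metric, $PX$ carries the sup-metric (compact-open $=$ uniform convergence) and is metrizable, so it suffices to check sequential continuity; let $\gamma_n\to\gamma$ in $\widetilde{GX}$. The crucial point is that on $\widetilde{GX}$ the length functional, although only lower semicontinuous on $PX$, is in fact continuous: $\ell(\gamma_n)=d(\gamma_n(0),\gamma_n(1))\to d(\gamma(0),\gamma(1))=\ell(\gamma)$. If $\ell(\gamma)=0$ then $\gamma$ is constant, $R(\gamma_n)$ has speed $\ell(\gamma_n)\to0$ and $R(\gamma_n)(0)=\gamma_n(0)\to\gamma(0)$, so $R(\gamma_n)\to\gamma=R(\gamma)$ uniformly. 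If $\ell(\gamma)=L>0$, fix a partition $0=s_0<\cdots<s_m=1$; for each $j$ and $n$ choose $t_n^j$ with $\lambda_{\gamma_n}(t_n^j)=s_j$. Along any subsequence with $t_n^j\to t_\infty^j$ (using compactness of $[0,1]$), uniform convergence gives $R(\gamma_n)(s_j)=\gamma_n(t_n^j)\to\gamma(t_\infty^j)$ and $d(\gamma(0),\gamma(t_\infty^j))=\lim s_j\ell(\gamma_n)=s_j L$, so $\gamma(t_\infty^j)=R(\gamma)(s_j)$; as every subsequence has a further one with this limit, $R(\gamma_n)(s_j)\to R(\gamma)(s_j)$ for each $j$. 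A standard $3\varepsilon$-argument, using that the geodesics $R(\gamma_n)$ are $\ell(\gamma_n)$-Lipschitz with $\sup_n\ell(\gamma_n)<\infty$ and $R(\gamma)$ is $L$-Lipschitz, then upgrades this pointwise convergence at the partition nodes to uniform convergence $R(\gamma_n)\to R(\gamma)$. The main obstacle is exactly this continuity argument: the reparametrizing functions $\lambda_{\gamma_n}$ may create or destroy flat intervals in the limit, so one cannot simply pass to inverses, and the subsequence-compactness trick above is what circumvents this; the one indispensable input making everything work is the passage from the merely lower-semicontinuous length on $PX$ to its genuine continuity on $\widetilde{GX}$.
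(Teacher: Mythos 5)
Your proposal is correct, and its overall architecture is the same as the paper's: the inclusion $GX\subset\widetilde{GX}$ gives $\enrsecat(\tilde\pi)\le\enrsecat(\pi)$ for free, and the hard direction is handled by constant-speed reparametrization of shortest paths, with the constant-path case treated separately. Where you genuinely diverge is in how continuity is established. You isolate a standalone lemma --- the reparametrization map $R\colon\widetilde{GX}\to GX$ is continuous --- and prove it qualitatively: length is continuous on $\widetilde{GX}$ because $\ell(\gamma)=\newd(\gamma(0),\gamma(1))$ there, then a subsequence-compactness argument in the parameter $t_n^j$ gives pointwise convergence $R(\gamma_n)(s_j)\to R(\gamma)(s_j)$ at partition nodes, and the uniform Lipschitz bounds $\newd(R(\gamma_n)(t),R(\gamma_n)(t'))=\ell(\gamma_n)\,|t-t'|$ upgrade this to convergence in $\dsup$. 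The paper instead proves continuity of the composite section $s=R\circ\tilde s$ directly, by a quantitative $\epsilon/5$ triangle-inequality estimate built on the identity $\lambda_e(t)=\newd(\tilde s_e(0),\tilde s_e(t))/\newd(\tilde s_e(0),\tilde s_e(1))$; since that estimate only uses sup-closeness of the two shortest paths, it is in substance a uniform-continuity statement for $R$, so the two arguments rest on the same essential fact (length equals endpoint distance on shortest paths). Your version is more modular and reusable (continuity of $R$ as a map of path spaces), at the cost of invoking sequential compactness and metrizability of $PX$; the paper's version is more hands-on and yields an explicit modulus. The only points worth tightening in your write-up are small: note that $\ell(\gamma_n)>0$ for all large $n$ (so $\lambda_{\gamma_n}$ is surjective and $t_n^j$ exists), and spell out in the $3\varepsilon$ step that the mesh of the partition is chosen relative to $\sup_n\ell(\gamma_n)$ and $L$; neither affects correctness.
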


\begin{proof}

Because $GX \subset \widetilde{GX}$, it is immediate that $\enrsecat{\pi} \ge \enrsecat{\tilde\pi}$. It remains to show the reverse inequality.

We will show that, given a local section $\tilde s\colon E \to \widetilde{GX}$ of $\tilde\pi$, reparametrizing each shortest path $\tilde s(e)$ yields a local section $s\colon E \to GX$ of $\pi$. Concretely, we need to prove that letting $s(e)$ be the path such that $\tilde s(e) = s(e) \circ \lambda_e$ defines a continuous map $s\colon E \to GX$. Recall that $\lambda_e (t) = \frac{\ell(\tilde s(e)|_{[0,t]})}{\ell(\tilde s(e))}$ if $\ell(\tilde s(e)) \neq 0$ and $\lambda_e (t) = t$ otherwise.

To simplify the notation, we write $\tilde s(e) = \tilde s_e$ and $s(e) = s_e$.

Recall that the path space $PX$ is equipped by the compact-open topology and that, if $X$ is a metric space, the compact-open topology on $PX$ is induced by the supremum metric

\[
\dsup(\gamma_1,\gamma_2) = \sup_{t\in[0,1]} \newd( \gamma_1(t) , \gamma_2(t) )
.\]

The subspace topology on both $GX$ and $\widetilde{GX}$ is also induced by the supremum metric.

We need to show that, for any given $e_0$ in $E$, the map $s\colon E \to GX$ is continuous at $e_0$ with respect to the supremum metric on $GX$. It suffices to show that for all $\epsilon>0$ there exists a $\delta>0$ such that $\newd(s_{e_0}(t),s_e(t)) < \epsilon$ for all $t$, whenever $\newd(e_0,e) < \delta$.

Let $\epsilon>0$. Choose $\delta>0$ such that $\newd(\tilde s_{e_0}(t),\tilde s_e(t)) < \epsilon/5$ for all $t$, whenever $\newd(e_0,e) < \delta$. Such a $\delta$ exists because $\tilde s$ is continuous and by the definition of the supremum metric. 

Furthermore, we may assume that $\delta>0$ is small enough such that $\ell(\tilde s_e) \ne 0$ whenever $\newd(e_0,e) < \delta$: If $\ell(\tilde s_{e_0}) \ne 0$, such a $\delta$ can be found by the continuity of $\tilde s$. In the case when $\ell(\tilde s_{e_0}) = 0$, the inequality $\dsup(\tilde s_{e_0}, \tilde s_e) < \epsilon$ immediately implies $\dsup(s_{e_0},s_e) < \epsilon$, because $\tilde s_{e_0} = s_{e_0}$ is a constant path. Thus, in that case the continuity of $s$ at $e_0$ follows immediately from the continuity of $\tilde s$ at $e_0$.



Let $t$ be in $[0,1]$ and choose $t'$ and $t''$ such that $\lambda_{e_0}(t') = \lambda_{e}(t'') = t$. Then:
\begin{align*}
\newd(s_{e_0}(t), s_e(t))
= &
\newd( s_{e_0} (\lambda_{e_0} (t')), s_e (\lambda_{e} (t'')))\\
\le &
\newd( s_{e_0} (\lambda_{e_0} (t')),  s_e (\lambda_{e} (t')))
+
\newd( s_{e} (\lambda_{e} (t')),  s_e (\lambda_{e} (t''))) \\
= &
\underbrace{\newd( \tilde s_{e_0} (t'),  \tilde s_e (t'))}_{<\epsilon/5}
+
\newd( \tilde s_{e} (t'),  \tilde s_e (t''))
.
\end{align*}

Because $\tilde s_e$ is a shortest path, we can rewrite the second summand above as follows
\begin{align*}
\newd( \tilde s_{e} (t') &, \tilde s_e (t''))
=
| \newd( \tilde s_{e} (0), \tilde s_e (t')) - \newd( \tilde s_{e} (0), \tilde s_e (t'')) |\\
\le &
\underbrace{| \newd( \tilde s_{e} (0), \tilde s_e (t')) - \newd( \tilde s_{e_0} (0), \tilde s_{e_0} (t')) |}_{<2\epsilon/5 \;\; (\text{Inequality A})}
+
\underbrace{| \newd( \tilde s_{e_0} (0), \tilde s_{e_0} (t')) - \newd( \tilde s_{e} (0), \tilde s_e (t'')) |}_{<2\epsilon/5 \;\; (\text{Inequality B})}
.
\end{align*}

Inequality A follows from

\begin{align*}
\newd( \tilde s_{e} (0), \tilde s_e (t')) \le
 \underbrace{\newd( \tilde s_{e} (0), \tilde s_{e_0} (0))}_{<\epsilon/5} + \newd( \tilde s_{e_0} (0), \tilde s_{e_0} (t')) + \underbrace{\newd( \tilde s_{e_0} (t'), \tilde s_{e} (t'))}_{<\epsilon/5}
\end{align*}

and the analogous reverse inequality.

To show Inequality B note that
\[
\lambda_e(t) = \frac{\ell(\tilde s_e|_{[0,t]})}{\ell(\tilde s_e)} = \frac{\newd(\tilde s_e(0),\tilde s_e(t))}{\newd(\tilde s_e(0),\tilde s_e(1))}
\]
because $\tilde s_e$ is a shortest path.

Recall that $t = \lambda_{e_0}(t') = \lambda_e(t'')$ and thus
\[
\frac{\newd(\tilde s_{e_0}(0),\tilde s_{e_0}(t'))}{\newd(\tilde s_{e_0}(0),\tilde s_{e_0}(1))} = \frac{\newd(\tilde s_e(0),\tilde s_e(t''))}{\newd(\tilde s_e(0),\tilde s_e(1))}
.
\]

This implies that
\begin{align*}
\newd(\tilde s_{e_0}(0),\tilde s_{e_0}(t')) = &
 \newd(\tilde s_e(0),\tilde s_e(t'')) \frac{\newd(\tilde s_{e_0}(0),\tilde s_{e_0}(1))}{\newd(\tilde s_e(0),\tilde s_e(1))}\\
 = & \newd(\tilde s_e(0),\tilde s_e(t'')) \left( 1 +\frac{ \newd(\tilde s_{e_0}(0),\tilde s_{e_0}(1)) - \newd(\tilde s_e(0),\tilde s_e(1))}{\newd(\tilde s_e(0),\tilde s_e(1))} \right)\\
\le & \newd(\tilde s_e(0),\tilde s_e(t'')) + | \newd(\tilde s_{e_0}(0),\tilde s_{e_0}(1)) - \newd(\tilde s_e(0),\tilde s_e(1)) | \\
\le & \newd(\tilde s_e(0),\tilde s_e(t'')) + \underbrace{\newd(\tilde s_{e_0}(0),\tilde s_{e}(0))}_{<\epsilon/5} + \underbrace{\newd(\tilde s_{e_0}(1),\tilde s_e(1))}_{<\epsilon/5}
.
\end{align*}

Together with the analogous reverse inequality, this implies Inequality B.
\end{proof}

\section{Further work}\label{sec:discussion}

There are many open questions left to explore regarding geodesic complexity. For example, it is unclear to which extent classical bounds for topological complexity still hold (in some form) for geodesic complexity.

The most commonly used upper bound for topological complexity is $\TC(X)\le\text{dim}(X\times X)$ \cite{Far03}, where $\text{dim}(X\times X)$ is the covering dimension. We were not able to find metric spaces for which we can prove that $\GC(X)>\text{dim}(X\times X)$, which leaves open the possibility that the dimensional upper bound also holds for geodesic complexity. However, there does not seem to be a reason for $\GC(X)\le\text{dim}(X \times X)$ to hold, since the arguments used in the case of topological complexity do not apply in the geodesic setting.

\textbf{Question 1:} Does the bound $\GC(X) \le \text{dim}(X\times X)$ hold for sufficiently nice metric spaces $X$?

It should be noted that we do not mean homotopy dimension here, as is often done in the study of topological complexity. We know from Theorem \ref{thm:gap} that even contractible metric spaces can have arbitrarily high geodesic complexity.

Other commonly used bounds for topological complexity are given by the Lusternik-Schnirelmann category $\text{cat}(X)$, which is the smallest $k$ for which there is an open cover $X=\bigcup_{i=0}^k U_i$ such that each inclusion $U_i\hookrightarrow X$ is null-homotopic. The Lusternik-Schnirelmann category is a homotopy invariant which is closely related to topological complexity. In \cite{Far03} Farber shows that:

\[\text{cat}(X) \le \TC(X) \le \text{cat}(X\times X)\]

The lower bound trivially carries over to geodesic complexity since $\TC(X)\le\GC(X)$. However, the upper bound does not carry over: According to Theorem \ref{thm:gap}, there is a Riemannian metric $g_m$, such that $\GC(S^{n+1},g_m) \ge n$, while it is well-known that $\text{cat}(S^n\times S^n)=2$. Furthermore, Theorem \ref{thm:flat-sphere} also yields a counterexample, since the theorem states that $\GC(W)\ge3$, and yet $\text{cat}(W\times W)=\text{cat}(S^2\times S^2)=2$.

Possibly the bound can be recovered by replacing the Lusternik-Schnirelmann category with a geodesic version. Given a metric space $X$, let $\text{Gcat}(X)$ be the smallest $k$ for which there is an decomposition into locally compact sets $X=\bigcup_{i=0}^k E_i$ such that each $E_i\hookrightarrow X$ is null-homotopic along geodesics (meaning that the homotopy restricted to any point of $X$ yields a geodesic). It is easy to see that $\text{Gcat}(X)\le \GC(X)$ but the proof of $\TC(X) \le \text{cat}(X\times X)$ does not carry over to the geodesic case.

\textbf{Question 2:} Does the bound $\GC(X) \le \text{Gcat}(X\times X)$ hold for sufficiently nice metric spaces $X$?


\end{document}